\def\theequation{\@arabic\c@equation}
\newcommand{\gaD}{\gamma_{{}_D}}
\newcommand{\gaN}{\gamma_{{}_N}}
\newcommand{\aT}{\mathfrak{a}_\Theta}
\newcommand{\Mf}{\mathfrak{m}}
\newcommand{\Mas}{\operatorname{Mas}}
\newcommand{\card}{\operatorname{card}}
\newcommand{\bbR}{{\mathbb{R}}}
\newcommand{\bbC}{{\mathbb{C}}}
\newcommand{\cB}{{\mathcal B}}
\newcommand{\cG}{{\mathcal G}}
\newcommand{\cH}{{\mathcal H}}
\newcommand{\cK}{{\mathcal K}}
\newcommand{\cL}{{\mathcal L}}
\newcommand{\cM}{{\mathcal M}}
\newcommand{\cX}{{\mathcal X}}
\newcommand{\no}{\nonumber}
\newcommand{\lb}{\label}
\newcommand{\Om}{\Omega}
\newcommand{\dOm}{{\partial\Omega}}
\newcommand{\tr}{\operatorname{Tr}}
\newcommand{\gr}{\operatorname{Gr}}
\newcommand{\ran}{\text{\rm{ran}}}
\newcommand{\dom}{\text{\rm{dom}}}
\newcommand{\sgn}{\operatorname{sign}}
\numberwithin{equation}{section}
\newtheorem{theorem}{Theorem}[section]
\newtheorem{lemma}[theorem]{Lemma}
\newtheorem{corollary}[theorem]{Corollary}
\theoremstyle{definition}
\newtheorem{definition}[theorem]{Definition}
\newtheorem{hypothesis}[theorem]{Hypothesis}
\newtheorem{remark}[theorem]{Remark}
\newcommand{\beq}{\begin{equation}}
\newcommand{\enq}{\end{equation}}
\newcommand{\vertiii}[1]{{\left\vert\kern-0.25ex\left\vert\kern-0.25ex\left\vert #1
    \right\vert\kern-0.25ex\right\vert\kern-0.25ex\right\vert}}
        \newcommand{\Sp}{\operatorname{Sp}}
\begin{document}

\title[Hadamard-type formulas via the Maslov form]{Hadamard-type formulas via the Maslov form}

\allowdisplaybreaks

\author[Y. Latushkin]{Yuri Latushkin}
\address{Department of Mathematics,
The University of Missouri, Columbia, MO 65211, USA}
\thanks{Supported by the NSF grant  DMS-1067929, by the Research Board and Research Council of the University of Missouri, and by the Simons Foundation. The authors thank Margaret Beck, Graham Cox and Chris  Jones for many stimulating discussions and 
are grateful to Fritz Gesztesy for suggesting many useful references.}

\email{latushkiny@missouri.edu}
\author[A. Sukhtayev]{Alim Sukhtayev}
\address{Department of Mathematics, Indiana University Bloomington \\
Bloomington, IN 47405, USA}
\email{alimsukh@iu.edu}

\dedicatory{To Jan Pr\"uss with best wishes on the occasion of his 65-th birthday}

\begin{abstract}
Given a star-shaped bounded Lipschitz domain $\Omega\subset\bbR^d$, we consider the  Schr\"odinger operator $L_\cG=-\Delta+V$ on $\Omega$ and its restrictions $L^{\Omega_t}_\cG$ on the subdomains $\Omega_t$, $t\in[0,1]$, obtained by shrinking $\Omega$ towards its center. We impose either the Dirichlet or quite general Robin-type boundary conditions determined by a subspace $\cG$ of the boundary
space $H^{1/2}(\partial\Omega)\times H^{-1/2}(\partial\Omega)$, and assume that the potential is smooth and takes values in the set of symmetric $(N\times N)$ matrices. Two main results are proved: First, for any $t_0\in(0,1]$ we give an asymptotic formula for the eigenvalues $\lambda(t)$ of the operator $L^{\Omega_t}_\cG$  as $t\to t_0$ up to quadratic terms, that is, we explicitly compute the first and second $t$-derivatives of the eigenvalues. This includes the case of the eigenvalues with arbitrary multiplicities.  Second, we compute the first derivative of the eigenvalues via the (Maslov) crossing form utilized in symplectic topology to define the Arnold-Maslov-Keller index of a path in the set of Lagrangian subspaces of the boundary space. The path is obtained by taking the Dirichlet and Neumann traces of the weak solutions of the eigenvalue problems for 
$L^{\Omega_t}_\cG$.
\end{abstract}

\subjclass{Primary 53D12, 34L40; Secondary 37J25, 70H12}
\date{\today}
\keywords{Schr\"odinger equation, Hamiltonian systems,  eigenvalues, stability, differential operators}

\maketitle

\normalsize

\section{Introduction}

The main objective of this paper is to establish connections between the Hadamard-type formulas for variation of the eigenvalues of differential operators on multidimensional domains when the domain is changing, and a quadratic form, the crossing (Maslov) form, used in symplectic topology to compute the Arnold-Maslov-Keller index.

The celebrated Hadamard Formula gives the rate of change of a simple eigenvalue of the Dirichlet Laplacian for a domain whose boundary changes in normal direction. The derivative was computed in some special cases by Rayleigh \cite[p.338, eqn.11]{R} and for general two dimensional domains by Hadamard \cite{H}, but the first rigorous proof was given by Garabedian and Schiffer \cite{GS}. During the last sixty years these results were developed in several directions most notably by computing the higher variations of the eigenvalues as well as variations of other quantities such as Green's functions, etc. We refer to an inspiring book \cite{Henry} and two excellent reviews, \cite{Gr} and \cite[Section 4]{BLC}, devoted to the variational and spectral aspects of the problem, respectively; one can find there further references. In addition, there is of course a vast body of work on dependence of the eigenvalues of domains both in multidimensional and one-dimensional cases; we mention here \cite{BW,KZ,KWZ1,KWZ2} and the classical exposition in \cite[Section VII.6.5]{Kato}.

Generalizing the known results in several directions, in the current paper we continue the work in \cite{DJ11} and \cite{CJLS14,CJM}, and study the variation with $t$ of the eigenvalues of the multidimensional Schr\"odinger operators $L^{\Omega_t}_\cG=L_\cG\big|_{L^2(\Omega_t)}$
with matrix valued potential on a family of bounded Lipschitz domains $\Omega_t$, $t\in[0,1]$, obtained by shrinking a given star-shaped domain $\Omega=\Omega_1$  towards its center; here $L_\cG=-\Delta+V$ and the boundary conditions are determined by a subspace $\cG$ of the boundary space $\cH=H^{1/2}(\partial\Omega)\times H^{-1/2}(\partial\Omega)$. We impose either the Dirichlet or very general Robin-type boundary conditions, and allow eigenvalues of arbitrary multiplicity. We derive an asymptotic formula for the eigenvalues up to quadratic terms (that is, we explicitly compute the first and second derivative of the eigenvalues with respect to the parameter $t$). A technically involved part of the paper, the asymptotic formula for the eigenvalues, resembles in spirit the classical perturbation formulas from \cite{Kato}
for analytic operator families but must be proved from scratch as the family of the Schr\"odinger operators is not analytic.

However, the most important and novel ingredient of the current paper is a connection of the Hadamard-type formulas for the derivatives of the eigenvalues and the Maslov index computed via the (Maslov) quadratic crossing form. The Maslov index $\Mas(\Upsilon,\cG)$ of a path $\{\Upsilon(t)\}_{t\in[0,1]}$ in the set of Lagrangian subspaces relative to a given Lagrangian subspace $\cG$ counts the number of conjugate points $t_0$ where $\Upsilon(t_0)\cap\cG\neq\{0\}$ and their multiplicities, see the general discussion in \cite{arnold67,Arn85,BF98,CLM94,F,MS,rs93,RoSa95,SW08} and the literature cited therein. For the problem at hand the subspaces $\Upsilon(t)$ are obtained by taking the boundary traces of weak solutions of the eigenvalue problems for the operators $L^{\Omega_t}_\cG$ while $\cG$ determines the boundary conditions. The multiplicity of a conjugate point is in fact the signature of a certain quadratic form, the Maslov crossing form $\mathfrak{m}_{t_0}$, defined on the intersection $\Upsilon(t_0)\cap\cG$. Recently,
a great deal of attention was given to the study of relations between  the eigenvalue counting functions for the operators $L^{\Omega_t}_\cG$ and the Maslov index, see \cite{CDB06,CDB09,CDB11,CJLS14,CJM,DP,DJ11,HS15,JLM13,LSS,PW}. Results of this type go back to the classical theorems by Sturm and Morse \cite{A01,M63} and to more recent theorems by Arnold \cite{arnold67,Arn85}, Bott \cite{B56}, Duistermaat \cite{D76}, Smale \cite{S65}, and Uhlenbeck \cite{U73}, see \cite{CJLS14} and \cite{LSS} for a more detailed historical account. A typical theorem from \cite{CJLS14,CJM,HS15,JLM13,LSS}
is a formula saying that the difference of the eigenvalue counting functions for the operators $L^{\Omega_{t_1}}_\cG$ and $L^{\Omega_{t_2}}_\cG$ is equal to the Maslov index of the path $\{\Upsilon(t)\}_{t\in[t_1,t_2]}$. In the current paper we establish an ``infinitesimal'' version of the difference formula by proving that the $t$-derivatives of the eigenvalues of the operators $L^{\Omega_t}_\cG$ at $t=t_0$ can be computed via the values of the Maslov form at the conjugate point $t_0$.

The paper is organized as follows. In Section \ref{sec:AMR} we formulate hypotheses and main results. In Section \ref{secMindFrL} we provide a brief tutorial on the Maslov index and recall several results from \cite{CJLS14} on computation of the (Maslov) crossing  form. The proofs of the main results are given in Section \ref{sec:mainres}.

{\bf Notations.}  We denote by $\cB(\cX_1,\cX_2)$ the set of bounded linear operators from a Hilbert space $\cX_1$ into a Hilbert space $\cX_2$ (real or complex), and abbreviate this as $\cB(\cX)$ when $\cX=\cX_1=\cX_2$. We denote by $I_\cX$ the identity operator on $\cX$. Given two closed linear subspaces $\cL,\cM\subset\cX$, we denote by $\cL+\cM$ their (not necessarily direct) sum, by $\cL\dot{+}\cM$ their direct sum (which need not be orthogonal), and by $\cL\oplus\cM$ their orthogonal sum. For a linear operator $T$ on a Banach space $\cX$ we denote by $T^{-1}$ its (bounded) inverse, by $\ker (T)$ its null space, by $\ran (T)$ its range, by $\Sp(T)$ its spectrum, by $T^*$ its adjoint (or transpose, when the space is real), by $T|_\cL$ its restriction to a subspace $\cL\subset\cX$. If $\cX$ is a Banach space and $\cX^\ast$ is its adjoint then ${}_{\cX^*}\langle v,u\rangle_{\cX}$ denotes the action of a functional $v\in\cX^*$ on $u\in \cX$. 
For a bounded domain $\Omega\subset\bbR^d$ with the boundary $\partial\Omega$ we denote by $H^s(\Omega)$ and $H^s(\partial\Omega)$ the usual Sobolev spaces. We abbreviate 
\beq\lb{Abb12}\begin{split}
\langle g,f\rangle_{1/2}&={}_{H^{-1/2}(\dOm;\bbR^N)}\langle (g_n)_{n=1}^N,(f_n)_{n=1}^N\rangle_{H^{1/2}(\dOm;\bbR^N)}
=\sum_{n=1}^N{}_{H^{-1/2}(\dOm)}\langle g_n, f_n\rangle_{H^{1/2}(\dOm)},\\
\langle u,v\rangle_{L^2(\Omega)}&= \sum_{n=1}^N\int_\Om u_n(x)v_n(x)\,dx,\, u=(u_n)_{n=1}^N, v=(v_n)_{n=1}^N\in L^2(\Om;\bbR^N),
\end{split}
\end{equation}
for the scalar product in $L^2(\Omega)$. We let $\top$ denote the transposition and use ``dot'' for $t$-derivatives.
Throughout, we suppress vector notations by writing $L^2(\Om)$ instead of $L^2(\Om;\bbR^N)$, etc. For a vector-valued function $u=(u_n)_{n=1}^N:\Om\to\bbR^N$ we write $\nabla u$ applying the gradient to each component $u_n$ of $u$, and analogously write $\Delta u=(\Delta u_n)_{n=1}^N$, etc. Given two vector-valued functions $u=(u_n)_{n=1}^N$ and $v=(v_n)_{n=1}^N$, we write $uv=(u_nv_n)_{n=1}^N$ for their componentwise product. We often use ``$\cdot$'' to denote the scalar product in $\bbR^d$ and write 
\begin{equation}\label{Abb1}
\nabla u\cdot\nabla v=\sum_{n=1}^N\langle\nabla u_n,\nabla v_n\rangle_{\bbR^d},\, \langle \nabla u,\nabla v\rangle_{L^2}=\sum_{n=1}^N\int_{\Om}\langle\nabla u_n(x),\nabla v_n(x)\rangle_{\bbR^d}\,dx.
\end{equation} 
If $U=(U_n)_{n=1}^N$ and $V=(V_n)_{n=1}^N$ are vector-valued functions with components $U_n,V_n\in\bbR^d$ then we
write
\beq\lb{Abb2}
U\cdot V=\sum_{n=1}^N\langle U_n, V_n\rangle_{\bbR^d},\, \langle U,V\rangle_{L^2(\Om)}=\sum_{n=1}^N\int_{\Om}\langle U_n(x), V_n(x)\rangle_{\bbR^d}\,dx.
\enq
 We denote by $\cH=H^{1/2}(\dOm)\times H^{-1/2}(\dOm)$ the boundary space, by $\gaD$ the Dirichlet and by $\gaN=\nu\cdot\gaD\nabla$ the weak Neumann trace, and write $\tr u=(\gaD u,\gaN u)$. We denote by $\cK_{\lambda,t}$ the following set:
 \beq\begin{split}
 \cK_{\lambda,t}&=\big\{u\in H^1(\Om): \langle\nabla u,\nabla\Phi\rangle_{L^2(\Om)}
 \\&\hskip1cm+\langle t^2\big(V(tx)-\lambda\big)u,\Phi\rangle_{L^2(\Om)}=0\text{ for all $\Phi\in H^1_0(\Om)$}\big\}, \,\,\lambda\in\mathbb{R},t\in\Sigma=[\tau,1], \tau>0.
 \end{split}\enq
 For any $\tau\in(0,1]$ we define the rescaled trace map $\tr_t$ by the formula
 \beq
 \tr_tu=(\gaD u, t^{-1}\gaN u),\, u\in\dom(\gaN)\subset H^1(\Om),\, \, t\in\Sigma=[\tau,1].
 \enq

\section{Assumptions and main results}\label{sec:AMR}

Let $\Omega\subset\bbR^d$ be a star-shaped bounded domain with Lipshitz boundary $\partial\Omega$, and consider the following eigenvalue problem
\begin{align}\label{BVP1}
	-\Delta u&+V(x)u=\lambda u,\,x\in\Omega,\lambda\in\bbR,\,
	\tr u\in\cG.
\end{align}
Here $u:\Omega\to\bbR^N$ is a vector-valued function 
in the real Sobolev space $H^1(\Omega)=H^1(\Omega;\bbR^N)$, the potential $V=V(\cdot)\in C^0(\overline{\Omega};\bbR^{N\times N})$ is a continuous, matrix-valued function having symmetric values, $V(x)^\top=V(x)$, and $\Delta=\partial_{x_1}^2+\dots+\partial_{x_d}^2$ is the Laplacian. We denote by $\tr$ the trace operator acting from $H^1(\Om)$ into the boundary space $\cH$,
\begin{equation}\label{dfTr}
	\tr\colon H^1(\Omega)\to \cH, \,
	u\mapsto (\gaD u,\gaN u),\quad \cH=H^{1/2}(\partial\Omega)\times H^{-1/2}(\partial\Omega),
\end{equation}
where $\gaD u=u\big|_{\partial\Omega}$ is the Dirichlet trace and $\gaN u=\nu\cdot\nabla u\big|_{\partial\Omega}$ is the (weak) Neumann trace on $\Om$.
Throughout, the Laplacian is understood in the weak sense, i.e. as a map from $H^1(\Om)$ into $H^{-1}(\Om)=\big(H^1_0(\Om)\big)^\ast$.

The boundary condition in \eqref{BVP1} is determined by a given closed linear subspace $\cG$ of the boundary space $\cH=H^{1/2}(\partial\Omega)\times H^{-1/2}(\partial\Omega)$. 
We will assume that $\cG$ is Lagrangian with respect to the symplectic form $\omega$ defined on $\cH$ by
\begin{equation}\label{dfnomega}
	\omega\big((f_1,g_1),(f_2,g_2)\big)=\langle g_2,f_1\rangle_{1/2}-\langle g_1,f_2\rangle_{1/2},
\end{equation} where $\langle g,f\rangle_{1/2}$ denotes the action of the functional $g\in H^{-1/2}(\partial\Omega)$ on the function $f\in H^{1/2}(\partial\Omega)$. 
Moreover, throughout the paper wee assume that either  $\cG=\cH_D$, the Dirichlet subspace, where we denote
		\[\cH_D=\big\{(0,g)\in\cH: g\in H^{-1/2}(\dOm)\big\},\] or $\cG$ is {\em Neumann-based}, that is, $\cG$ is the graph,
 \[\cG=\gr(\Theta)=\big\{
	(f,\Theta f)\in\cH: f\in H^{1/2}(\partial\Omega)\big\},\] of a compact, selfadjoint operator $\Theta\colon H^{1/2}(\partial\Omega)\to H^{-1/2}(\partial\Omega)$. 
	
Our main standing assumptions are summarized as follows:
\begin{hypothesis} \lb{h1}
We assume that:
\begin{itemize}\item[(i)]  $\Omega\subset{\bbR}^d$, $d\ge2$, is a nonempty, open, bounded, star-shaped, Lipschitz domain;
\item[(ii)] the subspace $\cG$ in \eqref{BVP1} is Lagrangian with respect to the symplectic form \eqref{dfnomega};
\item[(iii)] the subspace $\cG$ is either the Dirichlet subspace $\cG=\cH_D$, or a Neumann-based subspace;
 for the Neumann-based case we assume that
the compact operator $\Theta$ satisfying $\cG=\gr(\Theta)$ is associated with a closed, semibounded (and therefore symmetric) bilinear form  $\aT$ on $L^2(\dOm)$ with domain $\dom(\aT)=H^{1/2}(\dOm)\times H^{1/2}(\dOm)$ such that the following two conditions hold:
\begin{enumerate}\item[(a)] The form $\aT$  is $L^2(\dOm)$-semibounded from above by a positive constant $c_\Theta$ such that 
\[\aT(f,f)\le c_\Theta\|f\|_{L^2(\dOm)}^2\quad\text{ for all }\quad f\in H^{1/2}(\dOm),\]
\item[(b)] The form $\aT$  is $H^{1/2}(\dOm)$-bounded, that is, there is a positive constant $c_{\mathfrak{a}}$ such that
\[|\aT(f,f)|\le c_{\mathfrak{a}}\|f\|_{H^{1/2}(\dOm)}^2\quad\text{ for all }\quad f\in H^{1/2}(\dOm);\]
\end{enumerate}
\item[(iv)]  the potential $V$ is continuous, $V\in C^0(\overline{\Omega}; \bbR^N\times\bbR^N)$, and $V(x)$ is a symmetric $(N\times N)$ matrix for each $x\in\Omega$.
\end{itemize}
Sometimes assumption (i) will be replaced by a stronger assumption:
\begin{itemize}\item[(i')]  $\Omega\subset{\bbR}^d$, $d\ge2$, is a nonempty, open, bounded, star-shaped domain with $C^{1,r}$ boundary for some $1/2<r<1$.
\end{itemize}
When needed, for the operator $L^t_{\cG}$ defined below in \eqref{L_t}, we will assume that
\begin{itemize}\item[(iii')] assumption (iii) holds and $\dom(L^t_{\cG})\subset H^2(\Om)$.
\end{itemize}
When needed we will assume that
\begin{itemize}\item[(iv')] assumption (iv) holds and $V\in C^1(\overline{\Omega}; \bbR^N\times\bbR^N)$ is continuously differentiable.
\hfill$\Diamond$
\end{itemize}
\end{hypothesis}

Assuming Hypothesis \ref{h1} (iii), there exists a unique bounded, compact selfadjoint operator \\ ${\Theta\in\cB(H^{1/2}(\dOm), H^{-1/2}(\dOm))}$, defined for $f\in H^{1/2}(\dOm)$ by 
\beq\lb{qftheta}
\langle\Theta f,h\rangle_{1/2}=\aT(f,h)\,\text{ for }\, h\in H^{1/2}(\dOm),
\enq so that $\|\Theta\|_{\cB(H^{1/2}(\dOm), H^{-1/2}(\dOm))}\le c_{\mathfrak{a}}$ and the following assertions hold:
\begin{align}\lb{propTheta1}
\langle \Theta f,h\rangle_{1/2}&=
\langle \Theta h,f\rangle_{1/2}\quad\text{for all}\quad f,h\in H^{1/2}(\dOm),\\
\langle \Theta f,f\rangle_{1/2}&\le c_\Theta\|f\|_{L^2(\dOm)}^2
\quad\text{for all}\quad f\in H^{1/2}(\dOm).\lb{propTheta2}
\end{align}
 We note again that the operator $\Theta$ here and throughout is acting between the real Hilbert spaces $H^{1/2}(\dOm; \bbR^N)$ and $H^{-1/2}(\dOm; \bbR^N)$; it can be made complex by a standard procedure, cf. \cite[Section 5.5.3]{We80}, but this will not be needed until Section \ref{sec:mainres}.

Following the strategy of \cite{DJ11} and \cite{CJLS14}, a path of Lagrangian subspaces will be formed by shrinking the domain $\Om$ and rescaling the boundary value problem \eqref{BVP1} accordingly.
Since $\Omega$ is star-shaped, without loss of generality we assume that $0\in\Omega$, and for each $x\in\Omega$ there exist a unique $t\in(0,1]$ and $y\in\partial\Omega$ such that $x=ty$. For each $ t\in(0,1]$ we define a subdomain $\Om_t$ and the rescaled trace operator $\tr_t:H^1(\Omega)\to\cH$ by setting
\begin{equation}\label{defOmt}
	\Omega_t=\{x\in\Omega: x=t' y\,\text{ for }\, t'\in[0,t),\,y\in\dOm\},\,
	\tr_tu:= (\gaD  u,t^{-1}\gaN  u).
\end{equation}
Using the rescaled operator $\tr_t$ and the subspace $\cG$ from \eqref{BVP1}, we will consider the following family of boundary value problems on $\Om$, parametrized by $t$,
\begin{align}\label{tBVP1}
	-\Delta u+t^2V(tx)u=t^2\lambda u,\,x\in\Omega,\,\lambda\in\bbR,\, t\in(0,1],\, \tr_tu\in\cG.
\end{align}
 
 For a fixed $\lambda\in\mathbb{R}$, we let $\cK_{\lambda,t}$ denote the subspace in $H^1(\Om)$ of weak solutions to the equation $(-\Delta +t^2V(tx)-t^2\lambda)u=0$ for $t\in[\tau ,1]$, with $\tau \in(0,1]$ fixed.
Observe that no boundary conditions are imposed on the functions $u\in\cK_{\lambda, t}$. 

As we will see below, our hypotheses imply that the subspaces $\Upsilon(\lambda,t)=\tr_t(\cK_{\lambda,t})$ form a smooth path in the Fredholm--Lagrangian Grassmanian of the boundary space $\cG$. Therefore, for each $\lambda\in\bbR$ one can define the Maslov index of the path $t\mapsto\Upsilon(\lambda,t)$ with respect to $\cG$. Intuitively, this is a signed count of the crossings $t_0$ at which $\Upsilon(\lambda,t_0)$ and $\cG$ intersect nontrivially and the sign depends on the manner in which $\Upsilon(\lambda, t)$ passes through $\cG$ as $t$ increases. In particular, for each crossing $t_0$ a finite dimensional (Maslov) crossing form $\mathfrak{m}_{t_0}(p,q)$ is defined for $p,q\in\Upsilon(\lambda, t_0)\cap\cG$, cf. \eqref{dfnMasForm}, and the signature of the form is the sign of the crossing. We will see below that $\Upsilon(\lambda_0,t_0)\cap\cG\neq\{0\}$ if and only if $t_0^2\lambda_0$ is an eigenvalue of the boundary value problem \eqref{tBVP1} or, equivalently,  $\lambda_0$ is an eigenvalue of the boundary value problem \eqref{BVP1} restricted to $\Om_{t_0}$. Let $\lambda_0=\lambda(t_0)$ be an eigenvalue of multiplicity $m$ of \eqref{BVP1} restricted to $\Omega_{t_0}$. For $t$ near $t_0$ there will be $m$ eigenvalues $\lambda_j(t)$, $j=1,\dots,m$ of \eqref{BVP1} restricted to $\Om_t$,  bifurcating from $\lambda(t_0)$ (they may be repeating).
A main result of this paper is a formula for the $t$-derivative at a crossing of the eigenvalues $\lambda_j(t)$ of the boundary value problem \eqref{BVP1} restricted to $\Om_t$ expressed in terms of the Maslov crossing form $\mathfrak{m}_{t_0}(p,q)$ at $t_0$. To formulate the results, we will now introduce differential operators associated with \eqref{BVP1} and \eqref{tBVP1}.  

Let us consider first the following unitary operators:
\begin{align*}
U_t\colon &L^2(\Om_t)\to L^2(\Om),\quad (U_tw)(x)=t^{d/2}w(tx),\,x\in\Om,\\
U_t^\partial\colon &L^2(\dOm_t)\to L^2(\dOm),\quad (U_t^\partial h)(y)=t^{(d-1)/2}h(ty),\,y\in\dOm,\\
U_{1/t}^\partial\colon &L^2(\dOm)\to L^2(\dOm_t),\quad (U_{1/t}^\partial f)(z)=t^{-(d-1)/2}f(t^{-1}z),\,z\in\dOm_t,
\end{align*}
so that $(U_t^\partial)^*=U_{1/t}^\partial$ on $L^2$. These also define bounded operators on the appropriate Sobolev spaces, i.e., $U_t\in\cB(H^1(\Om_t),H^1(\Om))$ and 
$U_t^\partial\in\cB(H^{1/2}(\dOm_t),H^{1/2}(\dOm))$. The boundedness of the operator $U_t^\partial$ follows from the boundedness of $U_t$ and the boundedness of $\gaD$. Also, we define $U_t^\partial\colon H^{-1/2}(\dOm_t)\to H^{-1/2}(\dOm)$ by letting
\begin{equation}\lb{dfnUdt}
\langle U_t^\partial g, h\rangle_{1/2}=
{}_{H^{-1/2}(\dOm_t)}\langle  g,U_{1/t}^\partial h\rangle_{H^{1/2}(\dOm_t)},
\, h\in H^{1/2}(\dOm).
\end{equation}
For a subspace $\cG\subset\cH$ of the boundary space 
$\cH=H^{1/2}(\dOm)\times H^{-1/2}(\dOm)$ we let  $\cG_t\subset\cH_t$ be 
the subspace of  
$\cH_t=H^{1/2}(\dOm_t)\times H^{-1/2}(\dOm_t)$ defined by
\begin{equation}\label{defcGt}
\cG_t=U_{1/t}^\partial(\cG)=\big\{(U_{1/t}^\partial f, U_{1/t}^\partial g):\, (f,g)\in\cG\big\}.
\end{equation}
 Let us introduce the rescaled potentials $V^t$ and ${V}_{\lambda,t}$, 
\begin{align}
\begin{split}\label{eq2.13}
V^t(x)&=t^2V(tx),\, {V}_{\lambda,t}(x)=t^2V(tx)-t^2\lambda,\,\,\lambda\in\bbR,\, x\in\Om,\\
\end{split}
\end{align}
and the following family of operators $L_{\cG}^{\Om_t}$ corresponding to the boundary value problem \eqref{BVP1} but restricted to $\Om_t$:
\begin{align}\lb{L(Omt)}
\begin{split}
L_\cG^{\Om_t}u(x)&=-\Delta u(x)+V(x)u(x),\, x\in\Om_t,\,\,\,
L_{\cG}^{\Om_t} :\,\,  \dom(L_{\cG}^{\Om_t})\subset L^2(\Omega_t)\to L^2(\Omega_t),\\
 \dom(L_{\cG}^{\Om_t})&=\big\{u\in H^1(\Omega_t)\,|\,\Delta u\in L^2(\Omega_t) \,\text{ and }\\
&\gamma_{{}_{D,\dOm_t}} u=0 \,\,\hbox{in}\,\,H^{1/2}(\dOm_t)\,\,\text{if}\,\,\cG \,\text{is the Dirichlet subspace,}\,\,\text{or}\\ &(\gamma_{{}_{N,\dOm_t}}-U_{1/t}^\partial\circ\Theta\circ U_t^\partial\circ\gamma_{{}_{D,\dOm_t}})u=0\,\,\hbox{in}\,\,H^{-1/2}(\dOm_t)\,\,\text{if}\,\,\cG \,\text{is the Neumann-based subpace} \big\}.
\end{split}
\end{align}
In addition, we introduce operators $L^t_{\cG}$ obtained by rescaling $L_\cG^{\Om_t}$ from $\Om_t$ back to $\Om$ as follows:
\begin{align}
\begin{split}\lb{L_t}
L^t_{\cG}u(x)&=-\Delta u(x)+V^t(x)u(x),\, x\in\Om,\,\,\,L^t_{\cG}:\,\,\dom(L^t_{\cG})\subset L^2(\Omega)\to L^2(\Omega),\\
 \dom(L^t_{\cG})&=\big\{u\in H^1(\Omega)\,|\,\Delta u\in L^2(\Omega) \,\text{ and }\\
&\gaD u=0 \,\,\hbox{in}\,\,H^{1/2}(\dOm)\,\,\text{if}\,\,\cG \,\text{is the Dirichlet subpace,}\,\,\,\text{or}\\ &(\gaN-t\Theta\gaD)u=0\,\,\hbox{in}\,\,H^{-1/2}(\dOm)\,\,\text{if}\,\,\cG \,\text{is the Neumann-based subpace}  \big\}.
\end{split}
\end{align}
As we will see below, the operators $L^{\Om_t}_\cG$ and $L^t_\cG$ are selfadjoint with compact resolvent and therefore with only discrete spectrum. Moreover, $\lambda=\lambda(t)$ is an eigenvalue of $L_{\cG}^{\Om_{t}}$ acting on $L^2(\Om_t)$ if and only if the number $t^2\lambda(t)$ is an eigenvalue of $L^{t}_{\cG}$ acting on $L^2(\Om)$; we will use notation $\lambda^t$ for the latter thus setting $\lambda^t:=t^2\lambda(t)$. 

Let us suppose that $t_0\in(0,1]$ and $\lambda(t_0)$ is a fixed eigenvalue of the operator $L^{\Omega_{t_0}}_\cG$ of multiplicity $m$. Our objective is to describe the behavior of the eigenvalues $\{\lambda_{j}(t)\}_{j=1}^m$ of the operator $L_{\cG}^{\Om_t}$ for $t$ near $t_0$.
We note that $\lambda^{t_0}=t_0^2\lambda(t_0)$ is an eigenvalue of $L_{\cG}^{t_0}$ of multiplicity $m$, and denote by $P$ the Riesz projection in $L^2(\Om)$ for the operator $L_{\cG}^{t_0}$ onto $\ker(L_{\cG}^{t_0}-\lambda^{t_0}I_{L^2(\Om)})$ and by $S$ the reduced resolvent of the operator $L_{\cG}^{t_0}$, see formula \eqref{decomp1} below. In order to build the perturbation theory for the spectrum of $L_{\cG}^{t_0}$, we will need to introduce the following $m$-dimensional operators $T^{(1)}$ and $T^{(2)}$ acting in the subspace  $\ran(P)$, 
\begin{align}\lb{T1}
T^{(1)}&=P\big(\dot{V}^t\big|_{t=t_0}-\Theta_D\big)  P,\\
T^{(2)}&=P\big(\frac{1}{2}\ddot{V}^t\big|_{t=t_0}-\dot{V}^t\big|_{t=t_0}S\dot{V}^t\big|_{t=t_0}- \Theta_D   S\Theta_D  
+\dot{V}^t\big|_{t=t_0} S\Theta_D  + \Theta_D S\dot{V}^t\big|_{t=t_0}\big)P.\lb{newT2}
\end{align}
Here and throughout we use ``dot'' to denote the $t$-derivative and introduce notation, cf.\ Remark \ref{rem4.6},
\begin{equation*}
\Theta_D=\gaD^*\Theta\gaD\in\cB(H^{1}(\Omega),(H^{1}(\Omega))^*).
\end{equation*}
Let us consider a normalized basis $\{u_j^{t_0}\}_{j=1}^m$ of the subspace $\ran(P)$ consisting of the eigenvectors of the operator $T^{(1)}$ defined in \eqref{T1} and corresponding to its eigenvalues $\lambda^{(1)}_j$ so that $T^{(1)}u_j^{t_0}=\lambda_j^{(1)}u_j^{t_0}$, $j=1,\ldots,m$. We denote by $q_j=(
		\gaD u_j^{t_0}, \frac{1}{t_0}\gaN u_j^{t_0})\in\Upsilon(\lambda(t_0), t_0)\cap\cG$ the respective rescaled traces.


We are now ready to formulate our main results. We begin with the first derivative of the eigenvalues.
\begin{theorem}[Hadamard-type formula via the Maslov form]\lb{gH}
Assume Hypothesis \ref{h1} (i), (ii), (iii) and (iv') and fix $t_0\in(0,1]$. Let $\lambda(t_0)$ be a given eigenvalue of multiplicity $m$ of the operator $L_{\cG}^{\Om_{t_0}}$ defined in \eqref{L(Omt)}, and let $\{\lambda_{j}(t)\}_{j=1}^m$ denote the eigenvalues of the operator $L_{\cG}^{\Om_t}$ for $t$ near $t_0$.  Then the rates of change of $\lambda_j(t)$ are given by the formulas
\begin{equation}\lb{1.17}
			\frac{d\lambda_j(t)}{dt}\Big|_{t=t_0}=\frac{1}{t_0^2}\big(\lambda_j^{(1)}-2t_0\lambda(t_0)\big)=\frac1{t_0}\mathfrak{m}_{t_0}(q_j,q_j),\,\,j=1,\ldots,m,
			\end{equation}
		where the (Maslov) crossing form for the path $t\mapsto\Upsilon(\lambda(t_0), t)=\tr_t\big(\cK_{\lambda(t_0),t}\big)$ at $t_0$ can be computed as follows:
	\begin{equation}\lb{1.18}
	\mathfrak{m}_{t_0}(q_j,q_j)=\frac1{t_0}\langle \dot{{V}}_{\lambda(t_0),t}\big|_{t=t_0}
	u_j^{t_0}, u_j^{t_0}\rangle_{L^2(\Om)}-\frac1{t^2_0}\langle\gaN  u_j^{t_0},\gaD   u_j^{t_0}\rangle_{1/2}.
	\end{equation}
	Here, $\lambda_j^{(1)}$, $q_j$ and $u^{t_0}_j$ are defined in the paragraph preceding the theorem and $V_{\lambda,t}$ is defined in \eqref{eq2.13}.
	\end{theorem}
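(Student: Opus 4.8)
The plan is to transfer the problem to the fixed domain $\Om$ via the unitary rescaling $U_t$, to read off the first $t$-derivative of the eigenvalues from first-order perturbation theory for the family $\{L^t_\cG\}$, and then to identify the resulting quantity with the Maslov crossing form by means of a Lagrange (Green's) identity. First, recall that $\lambda$ is an eigenvalue of $L^{\Om_t}_\cG$ if and only if $\lambda^t:=t^2\lambda$ is an eigenvalue of $L^t_\cG$ on the $t$-independent space $L^2(\Om)$. Since the sesquilinear forms generating $\{L^t_\cG\}$ depend smoothly on $t$ (while the operator family itself is not analytic), the Riesz-projection reduction applies near $t_0$: the $m$ eigenvalue branches $\lambda^t_j$ of $L^t_\cG$ that bifurcate from $\lambda^{t_0}=t_0^2\lambda(t_0)$ are continuously differentiable at $t_0$, with $\frac{d}{dt}\lambda^t_j\big|_{t=t_0}=\lambda^{(1)}_j$, the eigenvalues of the reduced operator $T^{(1)}$ of \eqref{T1} on $\ran(P)$; this is the first-order part of the asymptotic formula established in Section \ref{sec:mainres}. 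Differentiating $\lambda^t_j=t^2\lambda_j(t)$ at $t_0$ then gives $\lambda^{(1)}_j=2t_0\lambda(t_0)+t_0^2\,\frac{d}{dt}\lambda_j(t)\big|_{t=t_0}$, which is the first equality in \eqref{1.17}.

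Next I would rewrite $\lambda^{(1)}_j$ in terms of boundary data. Since $u_j^{t_0}\in\ran(P)$ is a normalized eigenvector of $T^{(1)}$, one has $\lambda^{(1)}_j=\big\langle\big(\dot V^t|_{t=t_0}-\Theta_D\big)u_j^{t_0},u_j^{t_0}\big\rangle=\langle\dot V^t|_{t=t_0}u_j^{t_0},u_j^{t_0}\rangle_{L^2(\Om)}-\langle\Theta\gaD u_j^{t_0},\gaD u_j^{t_0}\rangle_{1/2}$, using $\Theta_D=\gaD^*\Theta\gaD$ (cf.\ Remark \ref{rem4.6}). In the Neumann-based case $u_j^{t_0}\in\dom(L^{t_0}_\cG)$ obeys $(\gaN-t_0\Theta\gaD)u_j^{t_0}=0$, so $\langle\Theta\gaD u_j^{t_0},\gaD u_j^{t_0}\rangle_{1/2}=t_0^{-1}\langle\gaN u_j^{t_0},\gaD u_j^{t_0}\rangle_{1/2}$; in the Dirichlet case $\gaD u_j^{t_0}=0$ and both terms vanish. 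Hence, in all cases, $\lambda^{(1)}_j=\langle\dot V^t|_{t=t_0}u_j^{t_0},u_j^{t_0}\rangle_{L^2(\Om)}-t_0^{-1}\langle\gaN u_j^{t_0},\gaD u_j^{t_0}\rangle_{1/2}$.

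The main geometric step is the computation of the crossing form. Using the smoothness of the path $t\mapsto\Upsilon(\lambda(t_0),t)=\tr_t(\cK_{\lambda(t_0),t})$ one may choose a smooth curve $t\mapsto u(t)\in\cK_{\lambda(t_0),t}$ with $u(t_0)=u_j^{t_0}$; then $t\mapsto\tr_t u(t)$ is a smooth lift of $q_j$ into $\Upsilon(\lambda(t_0),t)$ and, by the description of the crossing form recalled in Section \ref{secMindFrL}, $\mathfrak{m}_{t_0}(q_j,q_j)=\omega\big(q_j,\frac{d}{dt}\tr_t u(t)\big|_{t=t_0}\big)$ (cf.\ \eqref{dfnMasForm}). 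Differentiating the weak equation $(-\Delta+V_{\lambda(t_0),t})u(t)=0$ in $t$ and evaluating at $t_0$ gives $(-\Delta+V_{\lambda(t_0),t_0})\dot u(t_0)=-\dot V_{\lambda(t_0),t}|_{t=t_0}u_j^{t_0}$; pairing this with $u_j^{t_0}$, using that $V_{\lambda(t_0),t_0}$ is a symmetric multiplication operator, and invoking the Lagrange identity $\langle-\Delta u,v\rangle_{L^2(\Om)}-\langle u,-\Delta v\rangle_{L^2(\Om)}=\omega(\tr u,\tr v)$, valid for $u,v\in H^1(\Om)$ with $\Delta u,\Delta v\in L^2(\Om)$, yields $\omega(\tr u_j^{t_0},\tr\dot u(t_0))=\langle\dot V_{\lambda(t_0),t}|_{t=t_0}u_j^{t_0},u_j^{t_0}\rangle_{L^2(\Om)}$. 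Finally, expanding $\frac{d}{dt}\tr_t u(t)\big|_{t=t_0}=\big(\gaD\dot u(t_0),\,t_0^{-1}\gaN\dot u(t_0)-t_0^{-2}\gaN u_j^{t_0}\big)$, inserting it into $\omega(q_j,\cdot)$, and identifying the term $\langle\gaN\dot u(t_0),\gaD u_j^{t_0}\rangle_{1/2}-\langle\gaN u_j^{t_0},\gaD\dot u(t_0)\rangle_{1/2}$ with $\omega(\tr u_j^{t_0},\tr\dot u(t_0))$ produces exactly formula \eqref{1.18}.

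It then remains to assemble the identities. From $V_{\lambda,t}=V^t-t^2\lambda$ one has $\dot V_{\lambda(t_0),t}|_{t=t_0}=\dot V^t|_{t=t_0}-2t_0\lambda(t_0)$, so multiplying \eqref{1.18} by $t_0$ and using $\|u_j^{t_0}\|_{L^2(\Om)}=1$ gives $t_0\,\mathfrak{m}_{t_0}(q_j,q_j)=\langle\dot V^t|_{t=t_0}u_j^{t_0},u_j^{t_0}\rangle_{L^2(\Om)}-2t_0\lambda(t_0)-t_0^{-1}\langle\gaN u_j^{t_0},\gaD u_j^{t_0}\rangle_{1/2}=\lambda^{(1)}_j-2t_0\lambda(t_0)$ by the second paragraph; dividing by $t_0^2$ and combining with the first equality yields \eqref{1.17}. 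The step I expect to be the main obstacle is the rigorous justification, for the \emph{non-analytic} family $\{L^t_\cG\}$ in the low-regularity setting of Hypothesis \ref{h1}(i),(iii) (note that neither $H^2$-regularity nor a $C^{1,r}$ boundary is assumed here), of the two inputs used above: the differentiability of the branches $\lambda^t_j$ at $t_0$ together with the identification of their derivatives with the eigenvalues of $T^{(1)}$, which has to be carried out at the level of the generating forms and their Riesz projections rather than quoted from analytic perturbation theory; and the existence of a genuinely smooth solution curve $u(t)\in\cK_{\lambda(t_0),t}$ through the prescribed eigenvector, which is supplied by the smoothness of the Fredholm--Lagrangian path $\Upsilon$. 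Granting these, Theorem \ref{gH} follows from the elementary computations sketched above.
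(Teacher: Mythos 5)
Your proposal is correct and follows essentially the same route as the paper: the same rescaling $\lambda^t_j=t^2\lambda_j(t)$, the same identification of $\dot\lambda^t_j\big|_{t=t_0}$ with the eigenvalues of $T^{(1)}$ via the non-analytic Riesz-projection asymptotics developed in Section \ref{sec:mainres}, and the same use of the boundary condition $\gaN u_j^{t_0}=t_0\Theta\gaD u_j^{t_0}$ together with $\dot V_{\lambda(t_0),t}\big|_{t=t_0}=\dot V^t\big|_{t=t_0}-2t_0\lambda(t_0)$ to assemble \eqref{1.17}. The only deviation is that where the paper simply quotes Lemma \ref{prelmon} (i.e.\ \cite[Lemma 5.2]{CJLS14}) for the crossing-form identity \eqref{1.18}, you re-derive it by differentiating a smooth solution curve $u(t)\in\cK_{\lambda(t_0),t}$ and applying the Lagrange/Green identity, which is precisely the argument underlying that cited lemma.
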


We recall that $\dom (L^t_{\cG})\subset H^2(\Om)$ provided assumption (iii') holds. If this is the case then the strong Neumann trace $\gaN^su_j^{t_0}$ is defined. 
\begin{theorem}\lb{1.3} Assume Hypothesis \ref{h1} (i'), (ii), (iii') and  (iv') and fix $t_0\in(0,1]$. Let $\lambda_j(t)$, $u_j^{t_0}$, $q_j$, $j=1,\dots,m$, be as in Theorem \ref{gH}. 
Then
\beq\lb{henF1}\begin{split}
\frac{d\lambda_j(t)}{dt}\Big|_{t=t_0}&=\frac1{t_0}\mathfrak{m}_{t_0}(q_j,q_j)=\frac1{(t_0)^3}\int_{\dOm}\Big(\big(\nabla u_j^{t_0}\cdot \nabla u_j^{t_0})(\nu\cdot x)-2\langle\nabla u_j^{t_0}\cdot x,\gaN^{\rm s}u_j^{t_0}(x)\rangle_{\bbR^N}\\&+(1-d)\langle\gaN^{\rm s} u_j^{t_0}(x), u_j^{t_0}(x)\rangle_{\bbR^N}+\langle (V(t_0x)-\lambda(t_0))u_j^{t_0}(x),u_j^{t_0}(x)\rangle_{\bbR^N}(\nu\cdot x)\Big)\,dx.
\end{split}\enq
\end{theorem}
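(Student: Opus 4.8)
The plan is to start from the expression for $\mathfrak{m}_{t_0}(q_j,q_j)$ already established in Theorem \ref{gH}, namely
\[
\mathfrak{m}_{t_0}(q_j,q_j)=\frac1{t_0}\langle \dot{{V}}_{\lambda(t_0),t}\big|_{t=t_0}u_j^{t_0}, u_j^{t_0}\rangle_{L^2(\Om)}-\frac1{t^2_0}\langle\gaN  u_j^{t_0},\gaD   u_j^{t_0}\rangle_{1/2},
\]
and to convert each of the two terms into a boundary integral over $\partial\Omega$. Under Hypothesis \ref{h1}(i') and (iii') we have $\dom(L^{t_0}_\cG)\subset H^2(\Om)$, so $u_j^{t_0}\in H^2(\Om)$, the strong Neumann trace $\gaN^{\rm s}u_j^{t_0}$ is defined, and the pairing $\langle\gaN u_j^{t_0},\gaD u_j^{t_0}\rangle_{1/2}$ becomes the honest boundary integral $\int_{\dOm}\langle\gaN^{\rm s}u_j^{t_0}(x),u_j^{t_0}(x)\rangle_{\bbR^N}\,dx$. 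That disposes of the second term directly.

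The main work is the first term. Differentiating $V_{\lambda(t_0),t}(x)=t^2V(tx)-t^2\lambda(t_0)$ in $t$ at $t=t_0$ gives
\[
\dot V_{\lambda(t_0),t}\big|_{t=t_0}(x)=2t_0V(t_0x)+t_0^2\,x\cdot(\nabla V)(t_0x)-2t_0\lambda(t_0),
\]
where $x\cdot\nabla V$ denotes the radial derivative. The key step is to recognize the combination $2V(t_0x)+t_0\,x\cdot(\nabla V)(t_0x)$ as arising from a Rellich–Pohozaev-type identity: one tests the equation $-\Delta u_j^{t_0}+t_0^2V(t_0x)u_j^{t_0}=t_0^2\lambda(t_0)u_j^{t_0}$ (satisfied weakly, hence strongly since $u_j^{t_0}\in H^2$) against the multiplier $x\cdot\nabla u_j^{t_0}$ and integrates by parts over $\Om$. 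This is the standard ``divergence of a stress-energy tensor'' computation: the term $\int_\Om\Delta u\cdot(x\cdot\nabla u)$ produces, after integration by parts, bulk terms $\frac{d-2}{2}\int_\Om|\nabla u|^2$ together with boundary terms involving $(\nabla u\cdot\nabla u)(\nu\cdot x)$ and $\langle\nabla u\cdot x,\gaN^{\rm s}u\rangle$, while the potential term $\int_\Om t_0^2(V(t_0x)-\lambda(t_0))u\cdot(x\cdot\nabla u)$ is rewritten, using $u\cdot(x\cdot\nabla u)=\tfrac12 x\cdot\nabla(|u|^2)$ componentwise in the matrix sense, via another integration by parts into $-\tfrac{d}{2}\int_\Om t_0^2(V(t_0x)-\lambda(t_0))|u|^2 - \tfrac{t_0^2}{2}\int_\Om (x\cdot\nabla V)(t_0x)\cdot t_0\,|u|^2$ plus the boundary term $\tfrac{t_0^2}{2}\int_{\dOm}\langle(V(t_0x)-\lambda(t_0))u,u\rangle(\nu\cdot x)$. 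Collecting everything, the bulk terms reorganize so that $\int_\Om(2V+t_0 x\cdot\nabla V - 2\lambda)|u|^2$ — which is exactly $t_0^{-1}\langle\dot V_{\lambda(t_0),t}|_{t=t_0}u,u\rangle_{L^2}$ up to the factor — equals a pure boundary integral, and after dividing by the appropriate power of $t_0$ one reads off precisely the four boundary integrands appearing in \eqref{henF1}.

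The expected main obstacle is bookkeeping rather than conceptual: one must carry the $\bbR^N$-vector/matrix structure correctly through the Pohozaev integration by parts (the multiplier $x\cdot\nabla u$ acts componentwise, $V$ acts as a symmetric matrix, and symmetry $V^\top=V$ is what makes the quadratic-form manipulations legitimate), and one must track the scaling factors $t_0$, $t_0^2$, $t_0^3$ so that the final normalization $\frac1{t_0}\mathfrak{m}_{t_0}(q_j,q_j)=\frac1{t_0^3}\int_{\dOm}(\cdots)\,dx$ comes out exactly. The $C^{1,r}$ regularity of $\dOm$ in (i') guarantees that $\nu\cdot x$ and all boundary integrals are well defined and that the integration-by-parts (Gauss–Green) identities hold in the required generality, while (iv') gives the continuity of $\nabla V$ needed to make sense of $x\cdot\nabla V$ pointwise. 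Once the Rellich–Pohozaev identity is in hand, substituting it back into \eqref{1.18} and combining with the already-identified boundary form of $\langle\gaN u_j^{t_0},\gaD u_j^{t_0}\rangle_{1/2}$ yields \eqref{henF1}, and the equality with $d\lambda_j/dt|_{t=t_0}$ is inherited from Theorem \ref{gH}.
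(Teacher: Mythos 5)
Your route is essentially a self-contained version of the paper's. The paper proves Theorem \ref{1.3} in one line, by combining the identity $\frac{d\lambda_j(t)}{dt}\big|_{t=t_0}=\frac1{t_0}\mathfrak{m}_{t_0}(q_j,q_j)$ from Theorem \ref{gH} with the boundary-integral expression for the crossing form recalled in Lemma \ref{preltmon} (quoted from \cite{CJLS14}); you instead re-derive the content of that lemma from \eqref{1.18} by the Rellich--Pohozaev argument (multiplier $x\cdot\nabla u_j^{t_0}$ plus Green's identity applied to $-\Delta u_j^{t_0}+t_0^2(V(t_0x)-\lambda(t_0))u_j^{t_0}=0$, using $u_j^{t_0}\in H^2(\Om)$ from (iii') and the $C^{1,r}$ boundary from (i')). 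That is exactly the mechanism behind the cited lemma, so nothing is conceptually different: the paper buys brevity by citation, you buy self-containedness by redoing the computation.

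The one substantive point is precisely the ``bookkeeping'' you defer. Set $W(x)=t_0^2\big(V(t_0x)-\lambda(t_0)\big)$, so that $\frac1{t_0}\dot V_{\lambda(t_0),t}\big|_{t=t_0}=\frac1{t_0^2}\big(2W+x\cdot\nabla W\big)$. Carrying out your own integrations by parts (note that your intermediate boundary term already carries the weight $\tfrac{t_0^2}{2}$, i.e.\ it is $\tfrac12\int_{\dOm}\langle Wu,u\rangle(\nu\cdot x)\,dx$, and nothing cancels it) gives, for $u=u_j^{t_0}$,
\[
\int_\Om\big\langle(2W+x\cdot\nabla W)u,u\big\rangle\,dx=\int_{\dOm}\Big((\nabla u\cdot\nabla u)(\nu\cdot x)-2\langle\nabla u\cdot x,\gaN^{\rm s}u\rangle_{\bbR^N}+(2-d)\langle\gaN^{\rm s}u,u\rangle_{\bbR^N}+\langle Wu,u\rangle_{\bbR^N}(\nu\cdot x)\Big)\,dx,
\]
so after inserting this into \eqref{1.18} the last integrand of the resulting Hadamard formula is $t_0^2\langle(V(t_0x)-\lambda(t_0))u,u\rangle(\nu\cdot x)$ (equivalently $\langle V_{\lambda(t_0),t_0}u,u\rangle(\nu\cdot x)$), not $\langle(V(t_0x)-\lambda(t_0))u,u\rangle(\nu\cdot x)$ as printed in \eqref{henF1}; the two agree only for $t_0=1$, and trivially in the Dirichlet case where $\gaD u=0$. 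A scaling check shows the $t_0^2$-weighted version is the correct one: for $V=0$, $\Theta=0$ one has $\lambda(t)=\lambda(1)/t^2$, hence $\dot\lambda(t_0)=-2\lambda(t_0)/t_0$, which the weighted formula reproduces via the Rellich identity $\int_{\dOm}\big(\nabla u\cdot\nabla u-t_0^2\lambda(t_0)\,u\cdot u\big)(\nu\cdot x)\,dx=-2t_0^2\lambda(t_0)$ for the normalized Neumann eigenfunction $u_j^{t_0}$ on $\Om$, while the unweighted one does not. So your method is sound, but the claim that the scaling factors ``come out exactly'' as in \eqref{henF1} is the step that must actually be displayed; doing so shows the potential boundary term in \eqref{henF1} (and in \eqref{3henF1} as transcribed from \cite{CJLS14}) should carry the extra factor $t_0^2$, i.e.\ your argument proves the theorem with that corrected normalization.
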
 

\begin{corollary}[Hadamard-type formula for the Dirichlet case]\lb{1.4}
	Assume Hypothesis \ref{h1} with (i') and (iv'). Let $\cG$ be the Dirichlet subspace of the boundary space $\cH=H^{1/2}(\dOm)\times H^{-1/2}(\dOm)$, and $\lambda_j(t), u_j^{t_0},q_j$, $j=1,\dots,m$, be as in Theorem \ref{gH}.  Then,
	\begin{equation}
	\frac{d\lambda_j(t)}{dt}\Big|_{t=t_0}=\frac1{t_0}\mathfrak{m}_{t_0}(q_j,q_j)=-\frac1{(t_0)^3}\int_{\dOm}\|\gaN^{\rm s} u_j^{t_0}\|_{\bbR^N}^2(\nu\cdot x)\,dx<0.
	\end{equation}
\end{corollary}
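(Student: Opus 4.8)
The plan is to obtain the corollary directly from the boundary-integral identity \eqref{henF1} of Theorem \ref{1.3} by specializing to the Dirichlet subspace $\cG=\cH_D$; all the substantial work -- the quadratic eigenvalue asymptotics, the identification of the first derivative with the Maslov crossing form, and the Rellich--Pohozaev rearrangement turning the bulk expression \eqref{1.18} into the boundary integral \eqref{henF1} -- is already carried by Theorems \ref{gH} and \ref{1.3}, so only boundary calculus and a sign analysis remain. First I would note that under Hypothesis \ref{h1} with (i') and (iv') and $\cG=\cH_D$ one has $\dom(L_{\cG}^{t_0})\subset H^2(\Om)$, so that the hypotheses of Theorem \ref{1.3} are in force and \eqref{henF1} applies; in particular $u:=u_j^{t_0}\in H^2(\Om)$, the strong Neumann trace $\gaN^{\rm s}u=\nu\cdot\nabla u\big|_{\dOm}\in H^{1/2}(\dOm)$ is defined, and the Dirichlet trace vanishes, $\gaD u=u\big|_{\dOm}=0$.

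The key step is the observation that, because $u$ vanishes on all of $\dOm$, its tangential gradient along $\dOm$ vanishes too; combined with $\dOm\in C^{1,r}$ and $u\in H^2(\Om)$, this forces the trace of $\nabla u$ on $\dOm$ to be purely normal, namely $\nabla u_n\big|_{\dOm}=(\nu\cdot\nabla u_n)\,\nu$ a.e.\ on $\dOm$ for $n=1,\dots,N$. I would then substitute this into the integrand of \eqref{henF1}. The two terms carrying a bare boundary value of $u$, namely $(1-d)\langle\gaN^{\rm s}u,u\rangle_{\bbR^N}$ and $\langle(V(t_0x)-\lambda(t_0))u,u\rangle_{\bbR^N}(\nu\cdot x)$, vanish. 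For the remaining two, the identity above gives, a.e.\ on $\dOm$,
\[
\nabla u\cdot\nabla u=\sum_{n=1}^N|\nu\cdot\nabla u_n|^2=\|\gaN^{\rm s}u\|_{\bbR^N}^2,\qquad
\langle\nabla u\cdot x,\gaN^{\rm s}u\rangle_{\bbR^N}=(\nu\cdot x)\,\|\gaN^{\rm s}u\|_{\bbR^N}^2,
\]
so the integrand reduces to $\|\gaN^{\rm s}u\|_{\bbR^N}^2(\nu\cdot x)-2(\nu\cdot x)\|\gaN^{\rm s}u\|_{\bbR^N}^2=-(\nu\cdot x)\,\|\gaN^{\rm s}u\|_{\bbR^N}^2$. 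Putting this back into \eqref{henF1} -- and keeping the first equality $\frac{d\lambda_j(t)}{dt}\Big|_{t=t_0}=\frac1{t_0}\mathfrak{m}_{t_0}(q_j,q_j)$ verbatim from Theorem \ref{1.3} -- gives the asserted formula.

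It remains to establish the strict inequality. Since $\Om$ is star-shaped with respect to the interior point $0\in\Om$ and $\dOm\in C^{1,r}$, the radial parametrization $\dOm=\{\rho(\omega)\omega:\omega\in S^{d-1}\}$, $\rho>0$, shows $\nu(x)\cdot x>0$ for a.e.\ $x\in\dOm$. Moreover the eigenfunction $u_j^{t_0}$, being a nontrivial solution of a second-order elliptic equation with $\gaD u_j^{t_0}=0$, has $\gaN^{\rm s}u_j^{t_0}\not\equiv0$ on $\dOm$: otherwise, extending $u_j^{t_0}$ by zero across $\dOm_{t_0}$ produces a nontrivial solution on an open set that vanishes on a sub-open set, contradicting unique continuation. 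Hence $\int_{\dOm}\|\gaN^{\rm s}u_j^{t_0}\|_{\bbR^N}^2(\nu\cdot x)\,dx>0$, and the derivative is strictly negative.

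The only point demanding real care is the normal/tangential splitting $\nabla u_n\big|_{\dOm}=(\nu\cdot\nabla u_n)\,\nu$, which one should isolate as a short lemma on traces of gradients of $H^2(\Om)$-functions with zero Dirichlet data on a $C^{1,r}$ boundary; this is exactly where Hypothesis \ref{h1}(i') and the $H^2$-regularity of the eigenfunctions enter, and I expect it to be the (mild) main obstacle, everything else being immediate from Theorem \ref{1.3} or a one-line computation.
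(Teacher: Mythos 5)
Your reduction of the boundary identity \eqref{henF1} to the Dirichlet case is correct, but it follows a more self-contained route than the paper, which proves the corollary in one line by combining Theorem \ref{gH} with Corollary \ref{tmon} (the Dirichlet crossing-form formula \eqref{henF} recalled from [CJLS14, Corollary 5.7]). In effect you re-derive the content of Corollary \ref{tmon} from Lemma \ref{preltmon}/Theorem \ref{1.3}: since $\gaD u_j^{t_0}=0$ and $u_j^{t_0}\in H^2(\Om)$ on a $C^{1,r}$ boundary, the tangential part of the gradient trace vanishes, $\nabla u_n\big|_{\dOm}=(\nu\cdot\nabla u_n)\nu$, the two terms containing the bare boundary value of $u$ drop out, and the integrand collapses to $-(\nu\cdot x)\|\gaN^{\rm s}u_j^{t_0}\|^2_{\bbR^N}$; the prefactor $1/t_0^3$ and the first equality come verbatim from Theorem \ref{1.3}. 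This buys independence from the cited reference at the cost of having to justify $\dom(L^{t_0}_{\cH_D})=H^2(\Om)\cap H^1_0(\Om)$ (standard elliptic regularity under (i'), and implicitly used by the paper as well), and it is a legitimate alternative.

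There is, however, a genuine gap in your proof of the strict inequality. You combine two claims: (a) $\nu(x)\cdot x>0$ for a.e.\ $x\in\dOm$, and (b) $\gaN^{\rm s}u_j^{t_0}\not\equiv 0$ on $\dOm$. Claim (b) is fine, but (a) is not justified by merely invoking the radial parametrization: star-shapedness with respect to the single point $0$ together with a $C^{1,r}$ boundary yields only $\nu\cdot x\ge 0$ everywhere, and boundary points where the ray from the origin is tangent to $\dOm$ (so $\nu\cdot x=0$) are possible; whether such points form a set of measure zero is exactly the point that needs an argument (the radial function $\rho$ need not be Lipschitz, and where $\nu\cdot x=0$ it need not be differentiable, so ``differentiable a.e.\ $\Rightarrow \nu\cdot x>0$ a.e.'' is circular). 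Moreover, (b) alone does not close the gap, because $\gaN^{\rm s}u_j^{t_0}$ could in principle vanish on the set where $\nu\cdot x>0$ and be supported where $\nu\cdot x=0$, making the integral zero. The standard repair (and the argument behind the cited [CJLS14, Corollary 5.7]) avoids (a) altogether: if $\int_{\dOm}\|\gaN^{\rm s}u_j^{t_0}\|^2(\nu\cdot x)\,dx=0$, then $\gaN^{\rm s}u_j^{t_0}=0$ a.e.\ on the open set $\{x\in\dOm:\nu\cdot x>0\}$, which is nonempty since $\int_{\dOm}\nu\cdot x\,dS=d\,|\Om|>0$; together with $\gaD u_j^{t_0}=0$ this gives vanishing Cauchy data on a nonempty open boundary portion, and unique continuation forces $u_j^{t_0}\equiv 0$, a contradiction. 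With this adjustment your argument is complete.
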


We will now proceed with the asymptotic formula up to quadratic terms for the eigenvalues $\lambda_j(t)$ for $t$ near $t_0$. To this end, we will renumber the eigenvalues of the operator $T^{(1)}$, and let $\{\lambda^{(1)}_i\}_{i=1}^{m'}$ denote the 
$m'$ {\em distinct} eigenvalues of the operator $T^{(1)}$, so that $m'\le m$. We 
let $m_i^{(1)}$ denote their multiplicities, and let $P^{(1)}_i$ 
denote the orthogonal Riesz spectral projections of $T^{(1)}$ corresponding to the eigenvalue $\lambda_i^{(1)}$.
Next, let $\lambda^{(2)}_{i k}$, $i=1,\dots, m'$, $k=1,\dots,m^{(1)}_i$,  denote the eigenvalues of the operator $P^{(1)}_i T^{(2)} P^{(1)}_i$ 
in $\ran(P^{(1)}_i)$ where the operator $T^{(2)}$ is defined in \eqref{newT2}.

\begin{theorem}[Asymptotic formula]\lb{gHaf}
Assume Hypothesis \ref{h1} (i), (ii), (iii) and (iv') and fix $t_0\in(0,1]$.  Let $\lambda(t_0)$ be a given eigenvalue of multiplicity $m$ of the operator $L_{\cG}^{\Om_{t_0}}$ defined in \eqref{L(Omt)}. Then the following asymptotic formula holds for the eigenvalues $\{\lambda_j(t)\}_{j=1}^m=\{\lambda_{ik}(t)\}$ of the operator $L_{\cG}^{\Om_t}$ as $t\to t_0$:
\begin{equation}\label{asform}
\begin{split}
\lambda_{ik}(t)&=\lambda(t_0)+\big(\frac{1}{t_0^2}\lambda_i^{(1)}
-\frac{2}{t_0}\lambda(t_0)\big)(t-t_0)\\
&+\big(\frac{1}{t_0^2}\lambda_{ik}^{(2)}-\frac{2}{t_0^3}\lambda_i^{(1)}+
\frac{3}{t_0^2}\lambda(t_0)\big)(t-t_0)^2+\mathrm{o}(t-t_0)^2,\,
i=1,\dots, m',\, k=1,\dots, m_i^{(1)}.
\end{split}
\end{equation} 
\end{theorem}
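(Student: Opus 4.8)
The plan is to transfer the eigenvalue problem to the fixed Hilbert space $L^2(\Om)$ and there carry out a Kato-type perturbation analysis adapted to the non-analytic family $t\mapsto L^t_\cG$. Recall that $\lambda(t)$ is an eigenvalue of $L^{\Om_t}_\cG$ precisely when $\lambda^t:=t^2\lambda(t)$ is an eigenvalue of $L^t_\cG$, with the same multiplicity, so $\lambda^{t_0}=t_0^2\lambda(t_0)$ has multiplicity $m$; it therefore suffices to prove the two-term expansion
\begin{equation*}
\lambda^t=\lambda^{t_0}+\lambda^{(1)}_i(t-t_0)+\lambda^{(2)}_{ik}(t-t_0)^2+\mathrm{o}\big((t-t_0)^2\big)
\end{equation*}
for the eigenvalues of $L^t_\cG$ near $\lambda^{t_0}$, since \eqref{asform} then results from multiplying by the Taylor expansion $t^{-2}=t_0^{-2}-2t_0^{-3}(t-t_0)+3t_0^{-4}(t-t_0)^2+\mathrm{O}\big((t-t_0)^3\big)$, collecting powers of $t-t_0$, and using $\lambda^{t_0}=t_0^2\lambda(t_0)$.

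To analyse $L^t_\cG$ I would use its sesquilinear form on the \emph{$t$-independent} form domain ($H^1_0(\Om)$ if $\cG=\cH_D$, and $H^1(\Om)$ if $\cG$ is Neumann-based), so that the only $t$-dependence is the potential term $\langle V^t u,v\rangle_{L^2(\Om)}$ together with, in the Neumann-based case, the affine-in-$t$ bounded term $-t\langle\Theta\gaD u,\gaD v\rangle_{1/2}$. Since $V\in C^1(\overline\Om)$, the map $t\mapsto V^t$ is twice differentiable with $\dot V^t,\ddot V^t$ given by the chain rule (the second derivative read through one integration by parts in the form when $V$ is only $C^1$), and the boundary term is polynomial in $t$. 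Hence, for a small circle $\Gamma$ enclosing $\lambda^{t_0}$ and no other point of the spectrum, $(L^t_\cG-z)^{-1}$ depends twice continuously differentiably on $t$, uniformly for $z\in\Gamma$; consequently $P(t):=-\tfrac1{2\pi i}\oint_\Gamma(L^t_\cG-z)^{-1}\,dz$ is a $C^2$ family of rank-$m$ orthogonal projections with $P(t_0)=P$, and Kato's transformation function $W(t)$ (unitary, $W(t_0)=I$, $W(t)PW(t)^{-1}=P(t)$) yields a $C^2$ family $\widehat L(t):=W(t)^{-1}L^t_\cG W(t)\big|_{\ran(P)}$ of selfadjoint operators on the fixed $m$-dimensional space $\ran(P)$ with $\widehat L(t_0)=\lambda^{t_0}I_{\ran(P)}$, whose eigenvalues are exactly the $\lambda^t$ near $\lambda^{t_0}$. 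Establishing this step — making Kato's construction and the attendant convergence estimates work for a family that is not analytic — is, I expect, the main technical obstacle.

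Granting it, the standard first- and second-order perturbation formulas, applied with the form derivatives $\dot L:=\dot V^t\big|_{t=t_0}-\Theta_D$ and $\ddot L:=\ddot V^t\big|_{t=t_0}$ (the boundary term, being affine in $t$, contributing only at first order), give
\begin{equation*}
\widehat L(t)=\lambda^{t_0}I+T^{(1)}(t-t_0)+T^{(2)}(t-t_0)^2+\mathrm{o}\big((t-t_0)^2\big),
\end{equation*}
with $\widehat L'(t_0)=P\dot L\,P=T^{(1)}$ as in \eqref{T1}, and with the coefficient of $(t-t_0)^2$ equal to $\tfrac12\widehat L''(t_0)=P\big(\tfrac12\ddot L-\dot L\,S\,\dot L\big)P$, $S$ the reduced resolvent of $L^{t_0}_\cG$, which expands via $\dot L=\dot V^t\big|_{t_0}-\Theta_D$ to precisely the $T^{(2)}$ of \eqref{newT2}.

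It remains to split the eigenvalues of the $C^2$ selfadjoint $m\times m$ family $\widehat L(t)$. First-order perturbation theory arranges them, for $t$ near $t_0$, into $m'$ clusters, the $i$-th holding $m^{(1)}_i$ eigenvalues equal to $\lambda^{t_0}+\lambda^{(1)}_i(t-t_0)+\mathrm{o}(t-t_0)$, with $P^{(1)}_i$ the spectral projection of $T^{(1)}$ at $\lambda^{(1)}_i$. To extract the quadratic term I would pass to $M(t):=(t-t_0)^{-1}\big(\widehat L(t)-\lambda^{t_0}I\big)$ for $t\neq t_0$, $M(t_0):=T^{(1)}$ — a $C^1$ family near $t_0$ (because $\widehat L$ is $C^2$) with $M'(t_0)=T^{(2)}$ — and observe that $\theta$ is an eigenvalue of $M(t)$ iff $\lambda^{t_0}+(t-t_0)\theta$ is one of $\widehat L(t)$. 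First-order perturbation theory applied to $M$ inside its $i$-th cluster (isolated since the $\lambda^{(1)}_i$ are distinct) gives the eigenvalues there as $\lambda^{(1)}_i+\lambda^{(2)}_{ik}(t-t_0)+\mathrm{o}(t-t_0)$, $k=1,\dots,m^{(1)}_i$, the $\lambda^{(2)}_{ik}$ being the eigenvalues of $P^{(1)}_iT^{(2)}P^{(1)}_i$ (equivalently: in the direct degenerate second-order computation the usual correction $-P^{(1)}_iT^{(1)}D_iT^{(1)}P^{(1)}_i$, $D_i$ the reduced resolvent of $T^{(1)}$, vanishes because $T^{(1)}$ commutes with $P^{(1)}_i$). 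Undoing the substitution, $\widehat L(t)$ — hence $L^t_\cG$ — has eigenvalues $\lambda^{t_0}+\lambda^{(1)}_i(t-t_0)+\lambda^{(2)}_{ik}(t-t_0)^2+\mathrm{o}\big((t-t_0)^2\big)$, and the reduction of the first paragraph yields \eqref{asform}. As a sanity check, the linear term $\big(\tfrac1{t_0^2}\lambda^{(1)}_i-\tfrac2{t_0}\lambda(t_0)\big)(t-t_0)$ of \eqref{asform} recovers the first-derivative formula \eqref{1.17}.
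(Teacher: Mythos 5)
Your overall route is the same as the paper's: pass to $L^t_\cG$ via $\lambda^t=t^2\lambda(t)$, reduce to a finite-dimensional family on $\ran(P)$ by Kato's pair-of-projections construction, identify first- and second-order coefficients $T^{(1)}$, $T^{(2)}$, split the eigenvalues, and undo the substitution; your final bookkeeping reproduces the coefficients of \eqref{asform} exactly, and your device $M(t)=(t-t_0)^{-1}\big(\widehat L(t)-\lambda^{t_0}I\big)$ is essentially an in-line proof of the finite-dimensional reduction that the paper simply quotes from Kato (Theorem II.5.11), so that part is a harmless variant.

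The genuine gap sits precisely where you write ``Granting it'': the asserted $C^2$-dependence of the resolvent on $t$ and, above all, the claim that ``standard first- and second-order perturbation formulas'' yield $\widehat L(t)=\lambda^{t_0}I+(t-t_0)T^{(1)}+(t-t_0)^2T^{(2)}+\mathrm{o}\big((t-t_0)^2\big)$ cannot be quoted here --- they are the content that has to be proved. The perturbation contains the boundary term $-t\langle\Theta\gaD u,\gaD v\rangle_{1/2}$, which changes $\dom(L^t_\cG)$ and exists only at the form level, so Kato's results for holomorphic (or type (A)/(B)) families do not apply directly; the paper says this explicitly and instead (i) derives a resolvent identity straight from the forms, showing the boundary term enters as $R(\zeta,t_0)\Theta_D R(\zeta,t)$ with $\Theta_D=\gaD^*\Theta\gaD\in\cB(H^1(\Om),(H^1(\Om))^*)$, which requires extending $R(\zeta,t_0)$, $P$, $S$ to $\cB((H^1(\Om))^*,H^1(\Om))$ (Remark \ref{rem4.6}); (ii) deduces from it a second-order expansion of $R(\zeta,t)$ with remainder $\mathrm{o}\big((t-t_0)^2\big)$ uniform on the contour (Lemma \ref{lemma5.5}) --- note this is a pointwise expansion at $t_0$, not $C^2$ smoothness, and it is all that is needed (with $V\in C^1$ full $C^2$-dependence is more than the hypotheses comfortably give, a point your parenthetical about integrating by parts only gestures at); and (iii) performs the actual expansion of $PU(t)^{-1}L^t_\cG P(t)U(t)P$ (the multi-step proof of Lemma \ref{lem:simile}), which is where your identity $T^{(2)}=P\big(\tfrac12\ddot L-\dot L S\dot L\big)P$ with $\dot L=\dot V^t\big|_{t=t_0}-\Theta_D$, i.e.\ the cross terms $\dot V^tS\Theta_D+\Theta_D S\dot V^t-\Theta_D S\Theta_D$ of \eqref{newT2}, is actually established. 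Your formulas are the correct ones, but without an argument replacing these three steps the proof is incomplete; everything downstream of them (cluster splitting and the $t^{-2}$ expansion) is fine.
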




\section{Preliminaries: the Maslov index and symplectic view on eigenvalue problems}\label{secMindFrL}

In this section we collect some preliminary results relevant to the Maslov index and the (Maslov) crossing form. Also, we describe a symplectic approach to the eigenvalue problems for the operators $L^{\Omega_t}_\cG$ used in \cite{DJ11} and \cite{CJLS14, CJM}. Finally, we recall some formulas for the crossing form obtained in  \cite{CJLS14}.

\subsection{The Maslov index}
Let $\cX$ be a real Hilbert space equipped with a symplectic form $\omega$, that is, a bounded, skew-symmetric, nondegenerate bilinear form. Let  $J \in \cB(\cX)$ be the associated complex structure, which satisfies $\omega(u,v)=\langle Ju,v\rangle_\cX$ for the scalar product in $\cX$, $J^2=-I_\cX$ and $J^*=-J$.
\begin{definition}\label{dfnFLG}
 {\bf (i)}\, We say that two closed linear subspaces $\cK,\cL$ of $\cX$ form a {\em Fredholm pair} if their intersection $\cK\cap\cL$ has finite dimension and their sum $\cK+\cL$ has finite codimension (and hence is closed \cite[Section IV.4.1]{Kato}). Given a closed linear subspace $\cK$ of $\cX$, we define the {\em Fredholm Grassmannian
$F(\cK)$ of $\cK$} as the following set of closed linear subspaces of $\cX$:
$F(\cK)=\big\{\cL: \cL \text{ is closed and $(\cK,\cL)$ is a Fredholm pair}\}$.
%

{\bf (ii)}\, A closed linear subspace $\cK$ of $\cX$ is called {\em Lagrangian} if the form $\omega$ vanishes on $\cK$, that is, $\omega(u,v)=0$ for all $u,v\in\cK$, and $\cK$ is maximal, that is, 
 if $u\in\cX$ and $\omega(u,v)=0$ for all $v\in\cK$, then  $u\in\cK$. We denote by $\Lambda(\cX)$ the set of all Lagrangian subspaces in $\cX$.
Given $\cK\in\Lambda(\cX)$, we define the {\em Fredholm--Lagrangian Grassmannian $F\Lambda(\cK)$} to be the set of Lagrangian subspaces $\cL \subset \cX$ such that $(\cK,\cL)$ is a Fredholm pair; in other words
$F\Lambda(\cK)=F(\cK)\cap\Lambda(\cX)$.
\hfill$\Diamond$
\end{definition}

We will now recall the definition of the Maslov index of a piece-wise smooth path $\Upsilon\colon\Sigma\to F\Lambda(\cG)$ with respect to a fixed Lagarangian subspace $\cG\in\Lambda(\cX)$, see Definition \ref{dfnDef3.6} below. 
We refer to \cite[Theorem 3.6]{F} for a list of basic properties of the Maslov index; in particular, the Maslov index is a homotopy invariant for homotopies keeping the endpoints fixed, and is additive under catenation of paths. 
We begin with 
the following elementary fact  needed to define the Maslov crossing form, see \cite[Lemma 3.8]{CJLS14} or \cite[Lemma 2.22]{F}.
Let  $\Sigma=[a,b]\subset\bbR$ be a set of parameters.
\begin{lemma}\lb{crossex}\cite{CJLS14}
Let $\{\Pi_s\}_{s\in\Sigma}$ be a family of orthogonal projections on $\cX$ such that the function $s\mapsto\Pi_s$ is in $C^k(\Sigma;\cB(\cX))$ for some $k\in\{0,1,\dots\}$. Then for any $s_0\in\Sigma$ there exists a neighborhood $\Sigma_0$ in $\Sigma$ containing $s_0$ and a family of operators $\{R_s\}$ from $\ran(\Pi_{s_0})$ into $\ker(\Pi_{s_0})$ such that 
the function $s\mapsto R_s$ is in $C^k(\Sigma_0;\cB(\ran(\Pi_{s_0}),\ker(\Pi_{s_0})))$ and for all $s\in\Sigma_0$, using the decomposition 
$\cX=\ran(\Pi_{s_0})\oplus\ker(\Pi_{s_0})$, we have
$\ran(\Pi_s)=\gr(R_s)=\{q+R_sq: q\in\ran(\Pi_{s_0})\}$.
Moreover,
$R_s\to0$ in $\cB(\ran(\Pi_{s_0}),\ker(\Pi_{s_0}))$ as $s\to s_0$.
\end{lemma}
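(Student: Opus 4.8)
The plan is to reduce the statement to a standard fact about nearby closed subspaces and to make the regularity explicit via a resolvent-type formula. First I would fix $s_0\in\Sigma$ and abbreviate $P=\Pi_{s_0}$, $\cX=\ran(P)\oplus\ker(P)$, writing elements of $\cX$ as pairs relative to this orthogonal decomposition. The key observation is that, since $\Pi_s\to P$ in norm as $s\to s_0$ by continuity, the operator $P\Pi_s|_{\ran(P)}\colon\ran(P)\to\ran(P)$ is invertible for all $s$ in a sufficiently small neighborhood $\Sigma_0$ of $s_0$ (its distance from $I_{\ran(P)}$ is at most $\|\Pi_s-P\|<1$). This is precisely the condition under which $\ran(\Pi_s)$ is a graph over $\ran(P)$: for $q\in\ran(P)$ one sets $R_s q := (I-P)\Pi_s\big(P\Pi_s|_{\ran(P)}\big)^{-1}q \in \ker(P)$, and I would check that $\gr(R_s)=\ran(\Pi_s)$. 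Indeed, any $w\in\ran(\Pi_s)$ decomposes as $Pw+(I-P)w$; the map $w\mapsto Pw$ restricted to $\ran(\Pi_s)$ is injective with inverse built from the invertible operator above (this uses $\Pi_s w=w$), so $(I-P)w$ is a linear function of $Pw$, giving exactly the claimed formula.

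Next I would address the $C^k$ regularity of $s\mapsto R_s$. Since $s\mapsto\Pi_s$ is $C^k(\Sigma;\cB(\cX))$, the compositions $s\mapsto P\Pi_s|_{\ran(P)}$ and $s\mapsto(I-P)\Pi_s$ are $C^k$ into the appropriate operator spaces; inversion is a smooth (indeed real-analytic) map on the open set of invertible operators, so $s\mapsto\big(P\Pi_s|_{\ran(P)}\big)^{-1}$ is $C^k$ on $\Sigma_0$, and therefore so is the product defining $R_s$. This places $s\mapsto R_s$ in $C^k(\Sigma_0;\cB(\ran(P),\ker(P)))$ as required. Finally, the convergence $R_s\to0$ as $s\to s_0$ follows immediately by evaluating the formula at $s=s_0$: $(I-P)\Pi_{s_0}=(I-P)P=0$, so $R_{s_0}=0$, and continuity of $s\mapsto R_s$ gives $R_s\to0$ in norm.

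The argument is essentially routine once the graph formula is written down; the only point demanding a little care is the verification that $\gr(R_s)=\ran(\Pi_s)$ rather than merely $\gr(R_s)\subseteq\ran(\Pi_s)$, which I expect to be the main (minor) obstacle. One handles this by a dimension/complementation argument: $\ran(\Pi_s)$ and $\gr(R_s)$ are both closed complements of $\ker(\Pi_s)$... more cleanly, both have the property that $P$ maps them isomorphically onto $\ran(P)$, and they contain the same preimages, so they coincide. Since the paper only uses this lemma to set up local graph coordinates for the path of Lagrangians and to define the crossing form, no sharper statement is needed. Alternatively, one may invoke \cite[Lemma 3.8]{CJLS14} or \cite[Lemma 2.22]{F} directly, as indicated in the statement; the proof above simply reproduces that standard construction with the regularity bookkeeping made explicit.
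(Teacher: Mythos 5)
Your proposal is correct, and it is essentially the argument behind the cited references: the paper itself gives no proof of this lemma, deferring to \cite[Lemma 3.8]{CJLS14} and \cite[Lemma 2.22]{F}, whose proofs use exactly this graph construction via invertibility of $\Pi_{s_0}\Pi_s|_{\ran(\Pi_{s_0})}$ for $\|\Pi_s-\Pi_{s_0}\|<1$, with $C^k$ regularity inherited through operator inversion. The one point you flagged—equality $\gr(R_s)=\ran(\Pi_s)$ rather than mere inclusion—is handled as you indicate, since $\|\Pi_s-\Pi_{s_0}\|<1$ forces $\Pi_{s_0}$ to be injective on $\ran(\Pi_s)$, so the two subspaces, having the same bijective image under $\Pi_{s_0}$ and one containing the other's representatives, coincide.
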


We will now define the (Maslov) crossing form for a smooth path in the Fredholm--Lagrangian Grassmannian. Consider a $C^1$ path $\Upsilon\colon\Sigma\to F\Lambda(\cG)$, that is, a family $\{\Upsilon(s)\}_{s\in\Sigma}$
of Lagrangian subspaces such that the pair $(\cG,\Upsilon(s))$ is Fredholm for each $s\in\Sigma$ and the function $s\mapsto\Pi_s$ is in $C^1(\Sigma;\cB(\cX))$, where $\Pi_s$ denotes the orthogonal projection in $\cX$ onto the subspace $\Upsilon(s)$. Fix any $s_0\in\Sigma$ and use Lemma \ref{crossex} to find a neighborhood $\Sigma_0$ in $\Sigma$ containing $s_0$ and a $C^1$-smooth family of operators $R_s$ acting from $\Upsilon(s_0)=\ran(\Pi_{s_0})$ into $\ker(\Pi_{s_0})$ such that for all $s\in\Sigma_0$, using the decomposition $\cX=\ran(\Pi_{s_0})\oplus\ker(\Pi_{s_0})$, we have
\beq\lb{eqgr}\begin{split}
\Upsilon(s)=\ran(\Pi_s)=\gr(R_s)=\{q+R_sq:
q\in\ran(\Pi_{s_0})\}.\end{split}
\enq
\begin{definition}\lb{dfnMasF}
(i)\, We call $s_0\in\Sigma$ a {\em conjugate time} or {\em crossing} if $\Upsilon(s_0)\cap\cG\neq\{0\}$.

(ii)\, The finite-dimensional, symmetric bilinear form
\beq\lb{dfnMasForm}
{\Mf}_{s_0}(q,p)=\frac{d}{ds}\omega(q,R_sp)|_{s=s_0}=
\omega(q,\dot{R}(s_0)p)\,\text{ for }\, q,p\in\Upsilon(s_0)\cap\cG
\enq is called the {\em (Maslov) crossing form at $s_0$}.

(iii)\, The crossing $s_0$ is called {\em regular} if the crossing form $\Mf_{s_0}$ is nondegenerate;
it is called {\em positive} if the form is positive definite and {\em negative} if the form is negative definite.
\end{definition}
\begin{remark}\lb{rem:propcf}
The crossing form ${\Mf}_{s_0}$ in Definition \ref{dfnMasF} (ii) is finite dimensional since the pair of subspaces $(\cG,\Upsilon(s_0))$ is Fredholm. The form is symmetric since the subspace $\Upsilon(s)=\ran(\Pi_s)=\gr(R_s)$ is Lagrangian and thus the equality $\omega(q+R_sq,p+R_sp)=0$ holds for all $p,q\in\ran(\Pi_0)$. As any symmetric form, 
${\Mf}_{s_0}$ can be diagonalized; we will denote by $n_+({\Mf}_{s_0})$,
respectively, $n_-({\Mf}_{s_0})$ the number of positive, respectively negative squares of ${\Mf}_{s_0}$ and by $\sgn({\Mf}_{s_0})=n_+({\Mf}_{s_0})-n_-({\Mf}_{s_0})$ its signature. It can be shown, see e.g. \cite[Proposition 3.26]{F}, that the subspace $\ker(\Pi_{s_0})$ used in \eqref{eqgr} to construct the crossing form can be replaced by any subspace $\widetilde{\Upsilon}(s_0)$ of $\cX$ such that $R_s\in\cB(\Upsilon(s_0),\widetilde{\Upsilon}(s_0))$ and $\cX=\Upsilon(s_0)+\widetilde{\Upsilon}(s_0)$. Here the sum is not necessarily orthogonal, or even direct. The crossing form then does not depend on the choice of the subspace $\widetilde{\Upsilon}(s_0)$.\hfill$\Diamond$\end{remark}

\begin{definition}\label{dfnDef3.6}   Let $\cG$ be a Lagrangian subspace in a real Hilbert space $\cX$ and let $\Upsilon\colon\Sigma=[a,b]\to F\Lambda(\cG)$ be a $C^1$ smooth path in the Fredholm--Lagrangian Grassmannian. Let $s_0$ be the only regular crossing in a segment $\Sigma_0=[a_0,b_0]\subset\Sigma$ then the Maslov index $\Mas(\Upsilon|_{\Sigma_0},\cG)$ is defined as follows:
\beq\lb{MIcomp}
\Mas(\Upsilon|_{\Sigma_0},\cG)=\begin{cases}\sgn({\Mf}_{s_0})=n_+({\Mf}_{s_0})-n_-({\Mf}_{s_0})& \text{ if } s_0\in(a_0,b_0),\\
-n_-({\Mf}_{s_0})& \text{ if } s_0=a_0,\\
n_+({\Mf}_{s_0})& \text{ if } s_0=b_0.
\end{cases}
\enq
The crossing form can be used to define the Maslov index of a piecewise $C^1$-smooth path by computing each segment individually and summing.\hfill$\Diamond$
\end{definition}

\subsection{A symplectic view on the eigenvalue problems}
We will now recall from \cite{DJ11} and \cite{CJLS14} a way to describe the eigenvalue problem \eqref{BVP1} in terms of the intersections of a path of Lagrangian subspaces with a fixed subspace $\cG$. The path is formed by transforming the boundary value problems from the shrunken domain $\Om_t$ back to the original domain $\Omega$, and taking boundary traces of the weak solutions to the rescaled eigenvalue equations. We will begin with a simple rescaling.


\begin{lemma}\label{lemResc}\cite{CJLS14} Assume Hypothesis \ref{h1}(i). Let $\cG$  be either the Dirichlet subspace or a Neumann-based subspace of $\cH$ and let $\cG_t$ be the subspace defined in \eqref{defcGt} for some $t\in(0,1]$.
Then a function $w\in H^1(\Om_t)$ solves the eigenvalue problem 
$L_{\cG}^{\Om_t}w=\lambda w$, $x\in\Omega_t$,
that is,
\begin{align}
-\Delta w+V(x)w(x)=\lambda w(x),\,x\in\Om_t,\,\,\tr_{\dOm_t} w=(\gamma_{{}_{D,\dOm_t}}w,\gamma_{{}_{N,\dOm_t}}w)\in\cG_t,
\label{BVPomt2}
\end{align} 
if and only if the function $u=U_tw\in H^1(\Om)$ solves the eigenvalue problem 
$L^t_{\cG}u=t^2\lambda u$, $x\in\Omega$,
that is,
\begin{align}
-\Delta u +V^t(x)u(x)=t^2\lambda u(x),\,x\in\Om,\,\,\tr_t u&=(\gaD u,t^{-1}\gaN u)\in\cG.
\label{BVPom2}\end{align}
Moreover, the multiplicity of $\lambda\in\Sp(L_{\cG}^{\Om_t})$ is the same as the multiplicity of $t^2\lambda\in\Sp(L^t_{\cG})$.
\end{lemma}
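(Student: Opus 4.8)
The plan is to prove Lemma~\ref{lemResc} by a direct change-of-variables argument, verifying the equivalence of the two weak formulations and keeping careful track of how the rescaling acts on the Dirichlet and Neumann traces and on the subspace $\cG$. First I would record the chain rule for the substitution $x\mapsto tx$: if $u=U_tw$, i.e. $u(x)=t^{d/2}w(tx)$, then $\nabla u(x)=t^{d/2+1}(\nabla w)(tx)$ and $\Delta u(x)=t^{d/2+2}(\Delta w)(tx)$, so that in the sense of distributions on $\Om$ we have $(-\Delta u+V^t u)(x)=t^{d/2+2}\big((-\Delta w+Vw)(tx)\big)$ while $t^2\lambda u(x)=t^{d/2+2}\lambda w(tx)$. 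Since $U_t\colon L^2(\Om_t)\to L^2(\Om)$ is unitary and maps $H^1(\Om_t)$ boundedly onto $H^1(\Om)$ (and likewise on $H^1_0$), this already shows that the interior equation $-\Delta w+Vw=\lambda w$ on $\Om_t$ holds weakly (tested against $H^1_0(\Om_t)$) if and only if $-\Delta u+V^tu=t^2\lambda u$ holds weakly on $\Om$ (tested against $H^1_0(\Om)$). In particular $\Delta w\in L^2(\Om_t)$ iff $\Delta u\in L^2(\Om)$.

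Next I would treat the boundary traces. The identities $\gaD u=U_t^\partial\gamma_{{}_{D,\dOm_t}}w$ and $\gaN u=t\,U_t^\partial\gamma_{{}_{N,\dOm_t}}w$ follow from the definitions of $U_t^\partial$ on $H^{1/2}$ and on $H^{-1/2}$ (the latter via the duality pairing \eqref{dfnUdt}) together with the chain rule above; the extra factor $t$ in the Neumann trace comes from the single derivative in $\gaN=\nu\cdot\gaD\nabla$. Consequently $\tr_tu=(\gaD u,t^{-1}\gaN u)=(U_t^\partial\gamma_{{}_{D,\dOm_t}}w,\,U_t^\partial\gamma_{{}_{N,\dOm_t}}w)$, which by the definition \eqref{defcGt} of $\cG_t=U_{1/t}^\partial(\cG)$ lies in $\cG$ if and only if $(\gamma_{{}_{D,\dOm_t}}w,\gamma_{{}_{N,\dOm_t}}w)\in\cG_t$. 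In the Dirichlet case this is just $\gamma_{{}_{D,\dOm_t}}w=0\iff\gaD u=0$; in the Neumann-based case one checks that $U_{1/t}^\partial\circ\Theta\circ U_t^\partial$ is precisely the operator whose graph is $\cG_t$, so the boundary condition $(\gamma_{{}_{N,\dOm_t}}-U_{1/t}^\partial\Theta U_t^\partial\gamma_{{}_{D,\dOm_t}})w=0$ transforms into $(\gaN-t\Theta\gaD)u=0$, matching \eqref{L_t}. This establishes the stated equivalence of \eqref{BVPomt2} and \eqref{BVPom2}, hence of $L_\cG^{\Om_t}w=\lambda w$ and $L^t_\cG u=t^2\lambda u$.

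Finally, for the multiplicity statement I would observe that $w\mapsto U_tw$ is a linear isomorphism (indeed a unitary map of $L^2$ spaces) carrying $\ker(L_\cG^{\Om_t}-\lambda)$ onto $\ker(L^t_\cG-t^2\lambda)$, so these kernels have equal (finite) dimension; finiteness itself will follow once the operators are known to have compact resolvent, but for the present lemma it suffices that the map is a bijection between the eigenspaces.

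I do not expect any serious obstacle here: the only points requiring genuine care are bookkeeping of the powers of $t$ (the $t^{d/2+2}$ in the interior equation versus the single power of $t$ relating $\gaN u$ to $\gamma_{{}_{N,\dOm_t}}w$, which is exactly what forces the rescaled trace $\tr_t$ to carry the factor $t^{-1}$), and the verification — via the duality definition \eqref{dfnUdt} — that the weak Neumann trace really does transform with the claimed factor and that conjugation by $U_t^\partial$ intertwines $\Theta$ with the operator appearing in the domain of $L_\cG^{\Om_t}$. Both are routine once the definitions are unwound, so the proof is essentially a careful change of variables.
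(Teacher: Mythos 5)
The paper itself gives no proof of this lemma (it is quoted from \cite{CJLS14}), and your change-of-variables argument is precisely the intended, standard proof: rescale the weak interior equation, transport the traces by $U_t^\partial$, and note that $U_t$ maps eigenspace onto eigenspace. One bookkeeping correction: because $U_t$ carries the normalization $t^{d/2}$ while $U_t^\partial$ carries $t^{(d-1)/2}$, the correct trace identities are $\gaD(U_tw)=t^{1/2}\,U_t^\partial\gamma_{{}_{D,\dOm_t}}w$ and $\gaN(U_tw)=t^{3/2}\,U_t^\partial\gamma_{{}_{N,\dOm_t}}w$ (so $\tr_t(U_tw)=t^{1/2}\big(U_t^\partial\gamma_{{}_{D,\dOm_t}}w,\,U_t^\partial\gamma_{{}_{N,\dOm_t}}w\big)$), not the ones you wrote; this common scalar factor is harmless since $\cG$ and $\cG_t$ are linear subspaces, so your equivalence of boundary conditions and the multiplicity statement go through unchanged.
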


We will now define a path $\Upsilon$ in the set of Lagrangian subspaces in $H^{1/2}(\dOm)\times H^{-1/2}(\dOm)$ by taking traces of weak solutions to the rescaled equation \eqref{BVPom2} introduced in Lemma \ref{lemResc}. We denote by $\cK_{\lambda,t}$ the following set,
\beq\lb{dfnKS}\begin{split}
\cK_{\lambda,t}&=\big\{u\in H^1(\Om): \langle\nabla u,\nabla\Phi\rangle_{L^2(\Om)}
\\&\hskip1cm+\langle t^2\big(V(tx)-\lambda\big)u,\Phi\rangle_{L^2(\Om)}=0\text{ for all $\Phi\in H^1_0(\Om)$}\big\}, \,\,\lambda\in\mathbb{R},t\in\Sigma=[\tau,1], \tau>0.
\end{split}\enq
For any $\tau\in(0,1]$ we recall, cf. \eqref{defOmt}, the rescaled trace map $\tr_t$ defined by the formula, 
\beq\lb{dfnts}
\tr_tu=(\gaD u, t^{-1}\gaN u),\, u\in\dom(\gaN)\subset H^1(\Om),\,\, t\in\Sigma=[\tau,1].
\enq
Then the following lemma holds, see \cite[Lemma 4.3 and Proposition 4.10]{CJLS14}.

\begin{lemma}\cite{CJLS14}\lb{GevL} Assume Hypothesis \ref{h1}. Let $L^{t}_{\cG}$ be the family of operators on $L^2(\Om;\bbR^N)$ defined in \eqref{L_t} for $t\in\Sigma=[\tau,1]$ and $\tau\in(0,1]$. Fix $\lambda\in\bbR$ and let $\Upsilon(\lambda,t)=\tr_t(\cK_{\lambda,t})$ be the family of the subspaces in $\cH=H^{1/2}(\dOm)\times H^{-1/2}(\dOm)$ defined by means of \eqref{dfnKS}, \eqref{dfnts} for $t\in\Sigma$. The path $t\to\Upsilon(\lambda,\cdot)$ is piece-wise $C^1(\Sigma; F\Lambda(\cG))$. Moreover, for any $\lambda_0\in\mathbb{R}$, $t_0\in\Sigma$ we have  
\begin{align}\lb{eq:4.23}
&t^2_0\lambda_0\in\Sp(L^{t_0}_{\cG}) \text{ if and only if $
\tr_{t_0}(\cK_{\lambda_0,t_0})\cap\cG\neq\{0\}$,  and } \\
& \dim_\bbR\big(\ker(L^{t_0}_{\cG}-t^2_0\lambda_0)\big)=\dim_\bbR\big(\tr_{t_0}(\cK_{\lambda_0,t_0})\cap\cG\big).\label{eqdims}\end{align} 
\end{lemma}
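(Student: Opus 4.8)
The plan is to split the assertion into two essentially independent halves: the spectral characterization \eqref{eq:4.23} together with the dimension equality \eqref{eqdims}, whose analytic heart is an injectivity property of the rescaled trace map on the solution space, and the claim that $t\mapsto\Upsilon(\lambda,t)$ is a piecewise $C^1$ path into $F\Lambda(\cG)$, which is the more delicate structural part. For the first half, the starting observation is that the boundary conditions built into $\dom(L^t_\cG)$ in \eqref{L_t} are exactly the requirement $\tr_tu\in\cG$: in the Dirichlet case $\gaD u=0$ iff $(\gaD u,t^{-1}\gaN u)\in\cH_D$, and in the Neumann-based case $(\gaN-t\Theta\gaD)u=0$ iff $t^{-1}\gaN u=\Theta\gaD u$ iff $\tr_tu\in\gr(\Theta)=\cG$. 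Moreover, if $u\in\cK_{\lambda_0,t_0}$ then the weak identity in \eqref{dfnKS} reads $-\Delta u=-t_0^2(V(t_0\cdot)-\lambda_0)u$ in $H^{-1}(\Om)$, whose right-hand side lies in $L^2(\Om)$ since $V$ is continuous; hence $\Delta u\in L^2(\Om)$ automatically and $\gaN u\in H^{-1/2}(\dOm)$ is defined. Combining these, $\ker(L^{t_0}_\cG-t_0^2\lambda_0)=\{u\in\cK_{\lambda_0,t_0}:\tr_{t_0}u\in\cG\}$, so that $\tr_{t_0}\big(\ker(L^{t_0}_\cG-t_0^2\lambda_0)\big)=\tr_{t_0}(\cK_{\lambda_0,t_0})\cap\cG$; once $\tr_{t_0}$ is injective on $\cK_{\lambda_0,t_0}$ it restricts to a linear isomorphism of these two spaces, and since $L^{t_0}_\cG$ is self-adjoint with compact resolvent (so its spectrum is pure point) both \eqref{eq:4.23} and \eqref{eqdims} follow at once.

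The injectivity of $\tr_{t_0}$ on $\cK_{\lambda_0,t_0}$ is, I expect, the main obstacle, and it is a unique continuation statement. If $u\in\cK_{\lambda_0,t_0}$ has vanishing Cauchy data $\gaD u=0$ and $\gaN u=0$, fix a ball $B$ with $\overline{\Om}\subset B$ and extend $u$ by zero to $\bar u$ on $B$. The vanishing of $\gaD u$ gives $\bar u\in H^1(B)$, and the vanishing of the weak Neumann trace gives, via Green's formula, that $\bar u$ is a weak solution of $-\Delta\bar u+q\bar u=0$ in $B$ with $q\in L^\infty(B)$ equal to $t_0^2(V(t_0\cdot)-\lambda_0)$ on $\Om$ and $0$ on $B\setminus\overline{\Om}$. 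Since $\bar u$ vanishes on the nonempty open set $B\setminus\overline{\Om}$, the weak unique continuation property for $H^1$ solutions of Schr\"odinger-type systems with bounded potentials forces $\bar u\equiv0$, hence $u=0$. The nontrivial input here is weak UCP in the merely Lipschitz, $L^\infty$-potential, vector-valued setting (Carleman estimates, applied componentwise or directly to the system).

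For the Lagrangian property of $\Upsilon(\lambda,t)$, which is first of all a closed subspace of $\cH$ since $\tr_t$ has closed range on $\cK_{\lambda,t}$ by elliptic estimates, isotropy is precisely the symmetry of the matrix $V$: for $u,v\in\cK_{\lambda,t}$, Green's second identity (legitimate since $\Delta u,\Delta v\in L^2(\Om)$) gives $\langle\gaN u,\gaD v\rangle_{1/2}-\langle\gaN v,\gaD u\rangle_{1/2}=\langle\Delta u,v\rangle_{L^2}-\langle u,\Delta v\rangle_{L^2}=\langle V^tu,v\rangle_{L^2}-\langle u,V^tv\rangle_{L^2}=0$ because $V(tx)^\top=V(tx)$, so that $\omega(\tr_tu,\tr_tv)=0$. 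Maximality I would obtain by solving boundary value problems: on any subinterval of $\Sigma$ where $t^2\lambda$ avoids the discrete Dirichlet spectrum of $-\Delta+V^t$, the Dirichlet problem for $-\Delta u+(V^t-t^2\lambda)u=0$ is uniquely solvable, so $\Upsilon(\lambda,t)$ is the graph of a (rescaled, shifted) Dirichlet-to-Neumann operator $H^{1/2}(\dOm)\to H^{-1/2}(\dOm)$, which is symmetric by the isotropy computation, so its graph is Lagrangian; at the finitely many exceptional $t$ one argues similarly using a different Lagrangian splitting of $\cH$ adapted to $\Upsilon(\lambda,t)$, together with the index-zero Fredholm property of $-\Delta+V^t-t^2\lambda\colon H^1_0(\Om)\to H^{-1}(\Om)$.

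Finally, for the Fredholm-pair and regularity assertions: by the first two steps $\cG\cap\Upsilon(\lambda,t)=\tr_t\big(\ker(L^t_\cG-t^2\lambda)\big)$ is finite-dimensional because $L^t_\cG$ has compact resolvent, and since $\cG$ and $\Upsilon(\lambda,t)$ are Lagrangian one has $\big(\cG+\Upsilon(\lambda,t)\big)^\perp=J\big(\cG\cap\Upsilon(\lambda,t)\big)$, also finite-dimensional; using Hypothesis \ref{h1}(iii) — $\cG$ is the Dirichlet subspace or the graph of a \emph{compact} operator $\Theta$ — one checks that $\cG$ differs from a Lagrangian complementary to a chart of $\Upsilon(\lambda,t)$ by a compact perturbation, whence $\cG+\Upsilon(\lambda,t)$ is closed and $(\cG,\Upsilon(\lambda,t))$ is a Fredholm pair, i.e.\ $\Upsilon(\lambda,\cdot)$ takes values in $F\Lambda(\cG)$. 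The piecewise $C^1$ regularity then follows from the graph representation of Step 3: away from the exceptional parameters the orthogonal projection onto $\Upsilon(\lambda,t)$ is a $C^1$ function of the shifted Dirichlet-to-Neumann operator, which depends $C^1$ on $t$ because the resolvent of the Dirichlet realization of $-\Delta+V^t$ does — here one uses that $t\mapsto V^t=t^2V(t\cdot)$ is $C^1$ into $\cB(L^2(\Om))$, i.e.\ Hypothesis \ref{h1}(iv') — while ``piecewise'' absorbs the finitely many $t$ at which $t^2\lambda$ meets the Dirichlet spectrum, across which one passes to a different graph chart.
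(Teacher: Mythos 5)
The paper does not actually prove this lemma itself --- it is quoted from \cite{CJLS14} (Lemma 4.3 and Proposition 4.10 there) --- and your sketch follows essentially the same route as that cited proof: identify $\ker(L^{t_0}_{\cG}-t_0^2\lambda_0)$ with $\{u\in\cK_{\lambda_0,t_0}:\ \tr_{t_0}u\in\cG\}$, invoke weak unique continuation (zero extension across $\dOm$) to make $\tr_{t_0}$ injective on $\cK_{\lambda_0,t_0}$, obtain isotropy from Green's second identity and the symmetry of $V$, and obtain maximality, the Fredholm-pair property (via compactness of $\Theta$), and the $C^1$ regularity from graph/Dirichlet-to-Neumann representations together with smooth dependence of the resolvents. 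The points you leave loose --- maximality and closedness of $\Upsilon(\lambda,t)$ at the exceptional parameters where $t^2\lambda$ meets the Dirichlet spectrum, closedness of the sum $\cG+\Upsilon(\lambda,t)$ before counting its codimension, and your (correct) observation that the $C^1$ statement really uses $V\in C^1$, i.e.\ Hypothesis \ref{h1}(iv'), which the lemma's wording glosses over --- are exactly the spots treated by the standard chart-switching and compact-perturbation arguments in the cited source, so the proposal is sound and matches the original approach.
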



We mention an additional assumption on $\Theta$ under which one has the inclusion $\dom(L^{t}_{\cG})\subseteq H^2(\Omega)$, cf.\ Hypothesis \ref{h1} (iii'). Indeed, to ensure this,
 we must assume additional smoothness of the domain (as in Hypothesis \ref{h1}(i')) and impose a condition on the operator $\Theta$.
\begin{hypothesis} \lb{h3}
Assume Hypothesis \ref{h1}(i'), (iii). In addition, assume that the operator $\Theta$ satisfies\\ $\Theta\in\mathcal{B}_{\infty}(H^{3/2}(\dOm),H^{1/2}(\dOm))$. 
\end{hypothesis}
 In other words, if Hypothesis \ref{h3} holds, then
$$\Theta\in\mathcal{B}(H^{1/2}(\dOm),H^{-1/2}(\dOm))\cap\mathcal{B}_{\infty}(H^{3/2}(\dOm),H^{1/2}(\dOm))$$ and $\Theta^*=\Theta$ as an operator from $H^{1/2}(\dOm)$ into $H^{-1/2}(\dOm)$. 
A sufficient condition for the inclusion \\$\Theta\in\mathcal{B}_{\infty}(H^{3/2}(\dOm),H^{1/2}(\dOm))$ is $\Theta\in\mathcal{B}(H^{3/2-\varepsilon}(\dOm),H^{1/2}(\dOm))$ for some $\varepsilon>0$.
The following result can be found in \cite[Theorem 2.17]{GM08}.
\begin{theorem}\lb{GMthmH2}
Assume Hypothesis \ref{h3}. Then $\dom(L^{t}_{\cG})\subseteq H^2(\Om)$, that is, Hypothesis \ref{h1}(iii') holds.
\end{theorem}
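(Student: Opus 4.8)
The plan is to reduce the statement to the cited result \cite[Theorem 2.17]{GM08} by verifying that $L^t_\cG$ belongs to the class of operators treated there, splitting along the two types of boundary conditions. Fix $t\in(0,1]$ and let $u\in\dom(L^t_\cG)$, so that $u\in H^1(\Om)$, $g:=-\Delta u\in L^2(\Om)$, and $u$ satisfies either $\gaD u=0$ or $(\gaN-t\Theta\gaD)u=0$; the potential term is irrelevant to the regularity question since $\Delta u\in L^2(\Om)$ is already built into the definition of $\dom(L^t_\cG)$ and $V^t$ is a bounded symmetric multiplication operator. In the Dirichlet case, $u\in H^2(\Om)$ is the classical $H^2$-regularity for the Dirichlet Laplacian on a bounded $C^{1,r}$ domain with $r>1/2$, available under Hypothesis \ref{h1}(i'). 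In the Neumann-based case the relation $\gaN u=t\Theta\gaD u$ is exactly a Robin-type condition with coefficient $t\Theta$; since $t$ is a fixed positive scalar, $t\Theta=(t\Theta)^*$ as a map $H^{1/2}(\dOm)\to H^{-1/2}(\dOm)$ and $t\Theta\in\cB(H^{1/2}(\dOm),H^{-1/2}(\dOm))\cap\cB_\infty(H^{3/2}(\dOm),H^{1/2}(\dOm))$, i.e.\ $t\Theta$ satisfies precisely the hypotheses imposed on the boundary operator in \cite[Theorem 2.17]{GM08}, and invoking that theorem finishes the argument.

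Should a self-contained treatment of the Neumann-based case be wanted, I would reproduce the mechanism behind \cite[Theorem 2.17]{GM08} as follows. One starts from the sharp regularity for the Neumann Laplacian on a $C^{1,r}$ domain: the solution operator of $-\Delta w=g\in L^2(\Om)$, $\gaN w=\phi\in H^{1/2}(\dOm)$, maps into $H^2(\Om)$ with the corresponding a priori bound. Decomposing $u=w_1+w_2$ with $w_1\in H^2(\Om)$ solving the Neumann problem for data $(g,0)$ and $w_2$ harmonic with $\gaN w_2=t\Theta\gaD u$, the task becomes showing $w_2\in H^2(\Om)$, equivalently $t\Theta\gaD u\in H^{1/2}(\dOm)$. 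A naive bootstrap fails: from $u\in H^1(\Om)$ one only gets $\gaD u\in H^{1/2}(\dOm)$, hence $t\Theta\gaD u\in H^{-1/2}(\dOm)$, and pushing this through the Neumann solution operator recovers no more than $u\in H^1(\Om)$. Instead one argues by Fredholm theory: interpolating between $\cB(H^{1/2}(\dOm),H^{-1/2}(\dOm))$ and $\cB_\infty(H^{3/2}(\dOm),H^{1/2}(\dOm))$, the map $u\mapsto t\Theta\gaD u$ exhibits the $H^2$-realization of the Robin Laplacian as a compact perturbation of the $H^2$-realization of the Neumann Laplacian, hence, after a spectral shift, as a Fredholm operator of index zero from an $H^2$-based space onto $L^2(\Om)$; injectivity of the shifted operator, which follows from the semiboundedness $\aT(f,f)\le c_\Theta\|f\|_{L^2(\dOm)}^2$, then upgrades to surjectivity. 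This identifies the $H^2$-Robin realization with $L^t_\cG-V^t$, hence with $L^t_\cG$ up to the bounded term $V^t$, whence $\dom(L^t_\cG)\subseteq H^2(\Om)$.

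The one genuinely non-elementary point, and the main obstacle, is precisely the gain $t\Theta\gaD u\in H^{1/2}(\dOm)$ rather than merely $H^{-1/2}(\dOm)$: it cannot be produced by iteration and relies essentially on the compactness hypothesis $\Theta\in\cB_\infty(H^{3/2}(\dOm),H^{1/2}(\dOm))$ of Hypothesis \ref{h3} together with the sharp Neumann regularity on $C^{1,r}$ domains with $r>1/2$. Both ingredients are already incorporated in \cite[Theorem 2.17]{GM08}, so in the paper I would content myself with citing that result after the hypothesis check in the first paragraph.
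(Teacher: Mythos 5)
Your proposal is correct and takes essentially the same route as the paper: the paper gives no independent proof of this statement, but simply observes (as you do in your first paragraph) that under Hypothesis \ref{h3} the boundary operator $t\Theta$ satisfies the hypotheses of \cite[Theorem 2.17]{GM08} for each fixed $t\in(0,1]$, and cites that result. Your additional sketch of the mechanism behind \cite[Theorem 2.17]{GM08} is not needed for the paper's purposes and would not be included.
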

Next, we recall a result from \cite{GM08} saying that the operator $L_\cG^t$ is selfadjoint and has compact resolvent
provided Hypotheses \ref{h1} (iii) hold.
Let $\cG$ be either the Dirichlet subspace or a Neumann-based subspace in the boundary space $\cH=H^{1/2}(\dOm)\times H^{-1/2}(\dOm)$ and consider the sesquilinear form $\mathfrak{l}_{\cG}^t$ on $L^2(\Om)$, defined for $t\in[\tau,1]$
by the formula
\begin{align}\lb{dfnlGt}
\mathfrak{l}_{\cG} ^t(u,v)=\langle\nabla u,\nabla v\rangle_{L^2(\Om)}+t^2\langle V(t x)u,v\rangle_{L^2(\Om)}-t \langle\Theta\gaD  u,\gaD   v\rangle_{1/2},\,
\dom(\mathfrak{l}_{\cG} ^t)=H_\cG^1(\Omega)\times H^1_\cG(\Omega),
\end{align}
where $H^1_\cG(\Om)=H^1(\Om)$ provided $\cG$ is a Neumann-based subspace, and $H^1_\cG(\Om)=H_0^1(\Om)$ provided $\cG$ is the Dirichlet subspace (we let $\Theta=0$ for the Dirichlet case in \eqref{dfnlGt}).
One can associate to the form $\mathfrak{l}_{\cG} ^t$ the differential operator $L_{\cG}^t$ using the following result, which can be found in, e.g.,
\cite[Theorem 2.6]{GM08}. 
\begin{theorem}\lb{th26GM}
Assume Hypotheses \ref{h1} (i), (iii), and consider a potential $V\in L^\infty(\Om)$. Then the sesquilinear form $\mathfrak{l}_{\cG} ^t$ is symmetric, (uniformly) bounded from below and closed in 
$L^2(\Omega)\times L^2(\Omega)$. The associated 
operator $L_{\cG}^t$ in $L^2(\Om)$, satisfying the relation
\beq\lb{OPFORM}
\mathfrak{l}_{\cG} ^t(u,v)=\langle L_{\cG}^t u, v\rangle_{L^2(\Om)}\,\text{ for all }
\, u\in\dom(L_{\cG}^t), v\in H^1_{\cG}(\Om),
\enq is selfadjoint, bounded from below, has compact resolvent (and therefore only discrete spectrum), and is given by formulas \eqref{L_t}, that is,
\begin{align}
L_{\cG}^tu(x)&=-\Delta u(x)+t^2V(t x)u(x),\, x\in\Om, u\in\dom(L_{\cG}^t),\nonumber\\
 \dom(L^t_{\cG})&=\big\{u\in H^1(\Omega)\,|\,\Delta u\in L^2(\Omega) \,\text{ and }\\\nonumber
 &\gaD u=0 \,\,\hbox{in}\,\,H^{1/2}(\dOm)\,\,\text{if}\,\,\cG \,\text{is the Dirichlet subpace,}\,\,\,\text{or}\\ &(\gaN-t\Theta\gaD)u=0\,\,\hbox{in}\,\,H^{-1/2}(\dOm)\,\,\text{if}\,\,\cG \,\text{is the Neumann-based subpace}  \big\}.\nonumber
\end{align}
\end{theorem}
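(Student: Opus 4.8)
The plan is to invoke Kato's first representation theorem for closed, semibounded, symmetric sesquilinear forms, and then separately identify the domain of the associated operator by a Green-type formula. The symmetry of $\mathfrak{l}_{\cG}^t$ is immediate: the Dirichlet form $\langle\nabla u,\nabla v\rangle_{L^2(\Om)}$ is symmetric, the potential term $t^2\langle V(tx)u,v\rangle_{L^2(\Om)}$ is symmetric because $V(x)^{\top}=V(x)$ (here only $V\in L^\infty(\Om)$ is used), and the boundary term $t\langle\Theta\gaD u,\gaD v\rangle_{1/2}$ is symmetric by \eqref{propTheta1}. Thus $\mathfrak{l}_{\cG}^t$ is a densely defined symmetric form on $L^2(\Om)$ with form domain $H^1_\cG(\Om)$.

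Next I would establish the lower bound, uniformly for $t\in[\tau,1]$. In the Dirichlet case $\Theta=0$ and there is nothing to do beyond Poincar\'e's inequality. In the Neumann-based case the only term not manifestly controlled by the gradient term is the boundary term; by \eqref{propTheta2} we have $t\langle\Theta\gaD u,\gaD u\rangle_{1/2}\le c_\Theta\|\gaD u\|_{L^2(\dOm)}^2$ for $t\in[\tau,1]$, and since the trace map $\gaD\colon H^1(\Om)\to L^2(\dOm)$ is compact on the bounded Lipschitz domain $\Om$, an Ehrling-type interpolation inequality yields, for any $\eps>0$, a constant $C_\eps$ with $\|\gaD u\|_{L^2(\dOm)}^2\le\eps\|\nabla u\|_{L^2(\Om)}^2+C_\eps\|u\|_{L^2(\Om)}^2$. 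Choosing $\eps$ depending only on $c_\Theta$, and absorbing the potential term using $\|V\|_{L^\infty(\Om)}$, one obtains $\mathfrak{l}_{\cG}^t(u,u)\ge\tfrac12\|\nabla u\|_{L^2(\Om)}^2-C\|u\|_{L^2(\Om)}^2$ for all $u\in H^1_\cG(\Om)$, with $C$ independent of $t\in[\tau,1]$. Combined with the upper bound $|\mathfrak{l}_{\cG}^t(u,u)|\le C'\|u\|_{H^1(\Om)}^2$ --- which uses the $H^{1/2}$-boundedness of $\Theta$ (Hypothesis \ref{h1}(iii)(b), i.e.\ $\|\Theta\|_{\cB(H^{1/2}(\dOm),H^{-1/2}(\dOm))}\le c_{\mathfrak{a}}$) and boundedness of $\gaD\colon H^1(\Om)\to H^{1/2}(\dOm)$ --- this shows the form norm $(\mathfrak{l}_{\cG}^t(u,u)+(C+1)\|u\|_{L^2(\Om)}^2)^{1/2}$ is equivalent to the $H^1(\Om)$ norm on $H^1_\cG(\Om)$. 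Since $H^1_\cG(\Om)$ is closed in $H^1(\Om)$, the form is closed, and Kato's theorem produces a selfadjoint, bounded-below operator $L_{\cG}^t$ obeying \eqref{OPFORM}, with lower bound uniform in $t$ by the above.

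To identify $\dom(L_{\cG}^t)$ I would start from \eqref{OPFORM} restricted to $v\in H^1_0(\Om)$, which shows $\Delta u\in L^2(\Om)$ in the distributional sense and $L_{\cG}^t u=-\Delta u+t^2V(tx)u$. Then, for general $v\in H^1_\cG(\Om)$, the weak Green formula $\langle\nabla u,\nabla v\rangle_{L^2(\Om)}=\langle-\Delta u,v\rangle_{L^2(\Om)}+\langle\gaN u,\gaD v\rangle_{1/2}$ (valid once $\Delta u\in L^2(\Om)$) reduces \eqref{OPFORM} to $\langle\gaN u-t\Theta\gaD u,\gaD v\rangle_{1/2}=0$ for all $v\in H^1_\cG(\Om)$; since $\gaD$ maps $H^1(\Om)$ onto $H^{1/2}(\dOm)$, this is exactly $(\gaN-t\Theta\gaD)u=0$ in $H^{-1/2}(\dOm)$ in the Neumann-based case (in the Dirichlet case $v\in H^1_0(\Om)$ imposes no boundary relation, and $\gaD u=0$ is built into $u\in H^1_0(\Om)$). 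Running the computation backwards gives the converse inclusion, so $\dom(L_{\cG}^t)$ is as in \eqref{L_t}. Finally, the compactness of the resolvent follows from the Rellich--Kondrachov embedding $H^1(\Om)\hookrightarrow L^2(\Om)$, which is compact for a bounded Lipschitz domain: $(L_{\cG}^t-z)^{-1}$ maps $L^2(\Om)$ boundedly into $H^1_\cG(\Om)$ by the form-norm equivalence and then compactly back into $L^2(\Om)$, so the spectrum is discrete.

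I expect the main obstacle to be the last two steps carried out for a merely Lipschitz domain: the Green's-formula bookkeeping that pins down the precise Robin-type condition $(\gaN-t\Theta\gaD)u=0$ is delicate because $\gaN u$ exists only as an element of $H^{-1/2}(\dOm)$ and the pairing $\langle\cdot,\cdot\rangle_{1/2}$ must be handled through the $H^{-1/2}$--$H^{1/2}$ duality rather than as an $L^2(\dOm)$ inner product; by contrast, the uniformity of the constants in $t$ is comparatively routine, since $t$ ranges over the compact interval $[\tau,1]\subset(0,1]$.
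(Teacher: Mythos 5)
Your argument is correct, but it is worth noting that the paper itself gives no proof of Theorem \ref{th26GM}: it is imported verbatim as \cite[Theorem 2.6]{GM08}, so what you have written is essentially a reconstruction of the standard form-method proof underlying that citation rather than a parallel to anything proved in the paper. The ingredients match the Gesztesy--Mitrea route closely: your Ehrling-type inequality obtained from compactness of $\gaD\colon H^1(\Om)\to L^2(\dOm)$ is exactly the paper's Lemma \ref{var} (quoted from \cite[Lemma 2.5]{GM08}), and the paper itself uses it in the same way in Lemma \ref{lem:Gamma4} to get the uniform semiboundedness; the closedness, Kato's first representation theorem, and the Rellich--Kondrachov argument for compact resolvent are the standard steps. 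The point you flag as delicate is indeed where \cite{GM08} invests most of the work: on a merely Lipschitz domain the weak Neumann trace $\gaN u$ is only defined, via the generalized Green identity, on $\{u\in H^1(\Om):\Delta u\in L^2(\Om)\}$ with values in $H^{-1/2}(\dOm)$, and your domain identification also uses surjectivity of $\gaD\colon H^1(\Om)\to H^{1/2}(\dOm)$, which holds for Lipschitz domains but deserves explicit mention; granting those trace-theoretic facts, your bookkeeping with the $H^{-1/2}$--$H^{1/2}$ pairing is exactly right. One small mismatch with the statement as quoted: symmetry of the form requires the pointwise symmetry $V(x)^\top=V(x)$, which comes from Hypothesis \ref{h1}(iv) rather than from $V\in L^\infty(\Om)$ alone, so you are implicitly (and reasonably) assuming it, as the paper does. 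In short, your proposal buys a self-contained proof of a result the paper only cites, at the cost of leaning on the same Lipschitz-domain trace theory that \cite{GM08} develops.
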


\subsection{Formulas for the crossing form}
We now recall general formulas for the Maslov crossing form associated with the path $t\mapsto\Upsilon(\lambda,t)=\tr_t(\cK_{\lambda,t})$; in general, the Maslov form is defined in \eqref{dfnMasForm}. Let $\cG$ be either the Dirichlet or a Neumann-based Lagrangian subspace of $\cH=H^{1/2}(\dOm)\times H^{-1/2}(\dOm)$. Let $\Sigma=[\tau,1]$. We begin with a version of \cite[Lemma 5.2]{CJLS14}
\begin{lemma}\label{prelmon} Assume Hypothesis \ref{h1} (i), (ii), (iii) and (iv'). Fix $\lambda_0\in\mathbb{R}$ and consider the path $t\mapsto\Upsilon(\lambda_0,t)=\tr_t(\cK_{\lambda_0,t})$. If $t_0\in\Sigma$ is a crossing and $q\in \tr_{t_0}(\cK_{\lambda_0,t_0})\cap\cG$, then there exists a unique function $u_{t_0} \in \cK_{\lambda_0,t_0}$ such that $q=\tr_{t_0} u_{t_0}$, and the Maslov crossing form satisfies
\beq\label{mqq}
\mathfrak{m}_{t_0}(q,q)=\frac1{t_0}\langle \dot{V}_{\lambda_0,t}\big|_{t=t_0}
 u_{t_0}, u_{t_0}\rangle_{L^2(\Om)}-\frac{1}{t_0^{2}}\,\langle
\gaN u_{t_0},\gaD u_{t_0}\rangle_{1/2},
\enq
where ${V}_{\lambda_0,t}(x)=t^2V(tx)-t^2\lambda_0,\,\,x\in\Om$.
\end{lemma}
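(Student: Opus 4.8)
The proof of Lemma~\ref{prelmon} should follow the strategy of \cite[Lemma 5.2]{CJLS14}, adapting it to the rescaled trace $\tr_t$ and the rescaled potential ${V}_{\lambda_0,t}(x)=t^2V(tx)-t^2\lambda_0$. First I would fix a crossing $t_0\in\Sigma$ and pick $q\in\tr_{t_0}(\cK_{\lambda_0,t_0})\cap\cG$. By Lemma~\ref{GevL} the pair $(\cG,\Upsilon(\lambda_0,t))$ is Fredholm and the path is $C^1$ near $t_0$, so one may parametrize $\Upsilon(\lambda_0,t)=\gr(R_t)$ over $\Upsilon(\lambda_0,t_0)$ as in \eqref{eqgr}, and the crossing form is $\mathfrak{m}_{t_0}(q,q)=\frac{d}{dt}\omega(q,R_tq)\big|_{t=t_0}$. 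The first task is to show the solution $u_{t_0}\in\cK_{\lambda_0,t_0}$ with $\tr_{t_0}u_{t_0}=q$ is \emph{unique}: this is where one uses that $q\in\cG$, i.e.\ $q$ satisfies the (Dirichlet or Robin) boundary condition, so any $u\in\cK_{\lambda_0,t_0}$ with $\tr_{t_0}u=0$ lies in $\ker(L^{t_0}_\cG-t_0^2\lambda_0)$ already accounted for — more precisely, the map $u\mapsto\tr_{t_0}u$ from $\cK_{\lambda_0,t_0}$ to $\Upsilon(\lambda_0,t_0)$ is injective because a weak solution with vanishing Dirichlet \emph{and} Neumann traces is zero (unique continuation / the fact that $\tr_{t_0}$ restricted to $\cK_{\lambda_0,t_0}$ is a bijection onto $\Upsilon(\lambda_0,t_0)$, which is implicit in the definition $\Upsilon=\tr_t(\cK_{\lambda,t})$ together with \eqref{eqdims}).

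Next I would compute $\frac{d}{dt}\omega(q,R_tq)\big|_{t=t_0}$ by choosing, for $t$ near $t_0$, a $C^1$ family of weak solutions $u_t\in\cK_{\lambda_0,t}$ with $u_{t_0}$ the fixed solution above and $\tr_t u_t=q+R_t q\in\Upsilon(\lambda_0,t)$; such a family exists by differentiating the defining weak equation in \eqref{dfnKS} in $t$ (the $t$-dependence enters only through the smooth coefficient ${V}_{\lambda_0,t}$, using Hypothesis~\ref{h1}(iv') that $V\in C^1$). Then $\omega(q,R_tq)=\omega(\tr_{t_0}u_{t_0},\tr_t u_t-\tr_{t_0}u_{t_0})=\omega(\tr_{t_0}u_{t_0},\tr_t u_t)$ since $\omega$ vanishes on the Lagrangian subspace $\Upsilon(\lambda_0,t_0)$; and by the definition \eqref{dfnomega} of $\omega$ together with the rescaled trace \eqref{dfnts},
\[
\omega(\tr_{t_0}u_{t_0},\tr_t u_t)=\tfrac1t\langle\gaN u_t,\gaD u_{t_0}\rangle_{1/2}-\tfrac1{t_0}\langle\gaN u_{t_0},\gaD u_t\rangle_{1/2}.
\]
Differentiating this in $t$ at $t_0$ and using that $\tr_{t_0}\dot u_{t_0}$ is tangent to the path — hence the two terms involving $\dot u_{t_0}$ can be combined via the Lagrangian property $\omega(\tr_{t_0}u_{t_0},\tr_{t_0}\dot u_{t_0})=0$ (differentiate $\omega(\tr_t u_t,\tr_t u_t)\equiv 0$, or use that both $\tr_{t_0}u_{t_0}$ and $\tr_{t_0}\dot u_{t_0}+\,(\text{lower order})$ lie in $\Upsilon(\lambda_0,t_0)$) — reduces the derivative to the terms coming from the explicit $t^{-1}$ factors and from Green's identity applied to $\langle\nabla u_{t_0},\nabla\dot u_{t_0}\rangle_{L^2}$.

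The heart of the computation is a Green-type identity: pairing the weak equation for $u_t$ against $\dot u_{t_0}$ and the (differentiated) weak equation for $\dot u_{t_0}$ against $u_{t_0}$, and subtracting, the interior terms $\langle\nabla u_{t_0},\nabla\dot u_{t_0}\rangle_{L^2}$ cancel and one is left with the boundary pairing on one side and $\langle\dot{{V}}_{\lambda_0,t}\big|_{t=t_0}u_{t_0},u_{t_0}\rangle_{L^2(\Om)}$ on the other; this is exactly the mechanism in \cite[Lemma 5.2]{CJLS14}. Assembling the pieces yields
\[
\mathfrak{m}_{t_0}(q,q)=\tfrac1{t_0}\langle\dot{{V}}_{\lambda_0,t}\big|_{t=t_0}u_{t_0},u_{t_0}\rangle_{L^2(\Om)}-\tfrac1{t_0^2}\langle\gaN u_{t_0},\gaD u_{t_0}\rangle_{1/2},
\]
as claimed. \textbf{Main obstacle.} The delicate point is the low regularity: $u_{t_0}\in H^1(\Om)$ is only a \emph{weak} solution on a Lipschitz domain, so $\gaN u_{t_0}$ lives in $H^{-1/2}(\dOm)$ and Green's identity must be used in its weak (distributional) form, with all boundary pairings interpreted as the duality $\langle\cdot,\cdot\rangle_{1/2}$; one must also justify the existence and $C^1$-dependence of the family $u_t$ and the interchange of $d/dt$ with the traces $\gaD,\gaN$, which requires care since $\gaN$ is unbounded on $H^1$. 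These technical points are handled exactly as in \cite{CJLS14}, using that the $t$-dependence is smooth and enters only through the bounded multiplication operator ${V}_{\lambda_0,t}$, so the present lemma is essentially a transcription of \cite[Lemma 5.2]{CJLS14} with the rescaled data $\tr_t$ and ${V}_{\lambda_0,t}$ in place of the originals.
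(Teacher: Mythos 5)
The paper gives no proof of this lemma at all: it is quoted verbatim as ``a version of \cite[Lemma 5.2]{CJLS14}'', so your reconstruction has to be judged against that source. Your overall route is indeed the right one and matches it: uniqueness of $u_{t_0}$ via unique continuation from vanishing Cauchy data, the graph parametrization $\Upsilon(\lambda_0,t)=\gr(R_t)$ with $q+R_tq=\tr_tu_t$ for a differentiable family of weak solutions $u_t\in\cK_{\lambda_0,t}$, the reduction $\mathfrak{m}_{t_0}(q,q)=\frac{d}{dt}\omega(q,\tr_tu_t)\big|_{t=t_0}$, and the weak Green identity converting interior terms into the $\dot V$ term; the acknowledged technical points (existence and $C^1$-dependence of $u_t$, interchange of $d/dt$ with the traces, all pairings in the $H^{\pm1/2}$ duality) are exactly what \cite{CJLS14} supplies.

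There is, however, one concrete false step in the middle: the claim that $\omega(\tr_{t_0}u_{t_0},\tr_{t_0}\dot u_{t_0})=0$. By \eqref{dfnomega} this pairing equals $\frac1{t_0}\big(\langle\gaN\dot u_{t_0},\gaD u_{t_0}\rangle_{1/2}-\langle\gaN u_{t_0},\gaD\dot u_{t_0}\rangle_{1/2}\big)$, and the Green-identity subtraction you describe in your final paragraph shows this is $\frac1{t_0}\langle\dot{V}_{\lambda_0,t}\big|_{t=t_0}u_{t_0},u_{t_0}\rangle_{L^2(\Om)}$ --- precisely the first term of \eqref{mqq}. If it were zero, the crossing form would collapse to $-t_0^{-2}\langle\gaN u_{t_0},\gaD u_{t_0}\rangle_{1/2}$ and the lemma would be false. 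Neither of your justifications works: differentiating $\omega(\tr_tu_t,\tr_tu_t)\equiv0$ yields $0=0$ by antisymmetry (no information), and $\tr_{t_0}\dot u_{t_0}$ does not lie in $\Upsilon(\lambda_0,t_0)$, because $\dot u_{t_0}$ satisfies the inhomogeneous equation $(-\Delta+V_{\lambda_0,t_0})\dot u_{t_0}=-\dot{V}_{\lambda_0,t}\big|_{t=t_0}u_{t_0}$ and hence is not in $\cK_{\lambda_0,t_0}$; likewise the true tangent $\dot R_{t_0}q$ lies in the complement of $\Upsilon(\lambda_0,t_0)$, and $\omega(q,\dot R_{t_0}q)$ is the crossing form itself, generically nonzero. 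The correct bookkeeping is $\frac{d}{dt}\tr_tu_t\big|_{t=t_0}=\tr_{t_0}\dot u_{t_0}+(0,-t_0^{-2}\gaN u_{t_0})$: the second summand contributes $-t_0^{-2}\langle\gaN u_{t_0},\gaD u_{t_0}\rangle_{1/2}$, and the first is evaluated by your Green-identity argument, giving the $\dot V$ term. Deleting the ``Lagrangian property'' sentence and keeping that evaluation, your argument does yield \eqref{mqq}.
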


Next, we recall yet another general formula given in \cite[Lemma 5.5]{CJLS14} for the (Maslov) crossing form that holds for both the Dirichlet- and Neumann-based cases.
\begin{lemma}\lb{preltmon}  Assume Hypothesis \ref{h1} (i'), (ii), (iii') and  (iv'), and, for a fixed $\lambda_0$, let $t_0\in[\tau,1]$ be a crossing for the path $t\mapsto\Upsilon(\lambda_0,t)=\tr_t(\cK_{\lambda_0,t})$. 
If $q\in \tr_{t_0}(\cK_{\lambda_0,t_0})\cap\cG$ and $u_{t_0}\in\dom\big(L^{t_0}_{\cG}\big)\subseteq H^2(\Om)$ are such that $(-\Delta +t_0^2V(t_0x)-t_0^2\lambda_0)u_{t_0}=0$ and $q=\tr_{t_0}u_{t_0}$ then the Maslov crossing form of $\Upsilon$ is given as follows:
\beq\lb{3henF1}\begin{split}
\mathfrak{m}_{t_0}(q,q)&=\frac1{t_0^2}\int_{\dOm}\Big(\big(\nabla u_{t_0}\cdot \nabla u_{t_0})(\nu\cdot x)-2\langle\nabla u_{t_0}\cdot x,\gaN^{\rm s}u_{t_0}(x)\rangle_{\bbR^N}\\&+(1-d)\langle\gaN^{\rm s} u_{t_0}(x), u_{t_0}(x)\rangle_{\bbR^N}+\langle (V(t_0x)-\lambda_0)u_{t_0}(x),u_{t_0}(x)\rangle_{\bbR^N}(\nu\cdot x)\Big)\,dx.
\end{split}\enq
\end{lemma}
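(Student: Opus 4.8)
\emph{Proof sketch.} The idea is to start from the general crossing-form identity \eqref{mqq} of Lemma \ref{prelmon} — which applies here since Hypothesis \ref{h1}(i'),(iii'),(iv') strengthen (i),(iii),(iv) — and convert its right-hand side into the boundary integral \eqref{3henF1} by a Rellich--Pohozaev--Ne\v cas integration by parts. First I would note that, by Lemma \ref{prelmon}, the weak solution $u_{t_0}\in\cK_{\lambda_0,t_0}$ with $q=\tr_{t_0}u_{t_0}$ is unique and satisfies $\Delta u_{t_0}=t_0^2\big(V(t_0x)-\lambda_0\big)u_{t_0}\in L^2(\Om)$; since $q\in\cG$, it lies in $\dom\big(L^{t_0}_\cG\big)$, hence in $H^2(\Om)$ by Hypothesis \ref{h1}(iii'). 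Consequently the weak Neumann trace agrees with the pointwise one, $\gaN u_{t_0}=\gaN^{\rm s}u_{t_0}\in H^{1/2}(\dOm)\subset L^2(\dOm)$, so the duality pairing in \eqref{mqq} becomes the genuine surface integral $\langle\gaN u_{t_0},\gaD u_{t_0}\rangle_{1/2}=\int_{\dOm}\langle\gaN^{\rm s}u_{t_0},u_{t_0}\rangle_{\bbR^N}\,dx$.

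Next I would compute, from ${V}_{\lambda_0,t}(x)=t^2V(tx)-t^2\lambda_0$ and using (iv'),
\[
\tfrac1{t_0}\dot V_{\lambda_0,t}\big|_{t=t_0}(x)=2\wti W(x)+\big(x\cdot\nabla\wti W\big)(x),\qquad \wti W(x):=V(t_0x)-\lambda_0,\quad x\cdot\nabla\wti W:=\sum_{k=1}^d x_k\,\partial_{x_k}\big[V(t_0\,\cdot)\big],
\]
so that the first term in \eqref{mqq} equals $\int_\Om\langle(2\wti W+x\cdot\nabla\wti W)u_{t_0},u_{t_0}\rangle_{\bbR^N}\,dx$, while the equation for $u_{t_0}$ reads $\wti W u_{t_0}=t_0^{-2}\Delta u_{t_0}$.

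The heart of the proof is the integration by parts of $\int_\Om\langle(x\cdot\nabla\wti W)u_{t_0},u_{t_0}\rangle\,dx$. Using $V(x)^\top=V(x)$ (hence $\wti W^\top=\wti W$), the product rule gives $\langle(x\cdot\nabla\wti W)u_{t_0},u_{t_0}\rangle=x\cdot\nabla\langle\wti W u_{t_0},u_{t_0}\rangle-2\langle\wti W u_{t_0},x\cdot\nabla u_{t_0}\rangle$, and the divergence theorem on the $C^{1,r}$ domain $\Om$ turns $\int_\Om x\cdot\nabla f\,dx$ into $\int_{\dOm}(\nu\cdot x)f\,dx-d\int_\Om f\,dx$ with $f=\langle\wti W u_{t_0},u_{t_0}\rangle$. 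I then substitute $\wti W u_{t_0}=t_0^{-2}\Delta u_{t_0}$ in the two remaining interior integrals and apply the classical Rellich identities for $H^2$ functions,
\[
\int_\Om\langle\Delta u,u\rangle\,dx=\int_{\dOm}\langle\gaN^{\rm s}u,u\rangle\,dx-\int_\Om(\nabla u\cdot\nabla u)\,dx,
\]
\[
\int_\Om\langle\Delta u,x\cdot\nabla u\rangle\,dx=\int_{\dOm}\Big(\langle\gaN^{\rm s}u,x\cdot\nabla u\rangle_{\bbR^N}-\tfrac12(\nu\cdot x)(\nabla u\cdot\nabla u)\Big)\,dx+\tfrac{d-2}{2}\int_\Om(\nabla u\cdot\nabla u)\,dx,
\]
the second being the standard Rellich computation (write $\langle\Delta u,x\cdot\nabla u\rangle=\sum_n\operatorname{div}\big((x\cdot\nabla u_n)\nabla u_n\big)-(\nabla u\cdot\nabla u)-\tfrac12x\cdot\nabla(\nabla u\cdot\nabla u)$ and use the divergence theorem). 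The two substitutions contribute interior Dirichlet-energy terms with coefficients $\tfrac{d-2}{t_0^2}$ and $\tfrac{2-d}{t_0^2}$, which cancel, so only boundary integrals survive; assembling them together with the $-t_0^{-2}\int_{\dOm}\langle\gaN^{\rm s}u_{t_0},u_{t_0}\rangle\,dx$ term from \eqref{mqq} yields the boundary-integral formula \eqref{3henF1}. I would remark that this derivation does not use the boundary condition $q\in\cG$ explicitly, which is why the same formula holds in both the Dirichlet and the Neumann-based cases.

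The main obstacle is the justification of the Rellich--Pohozaev step: the multiplier $x\cdot\nabla u_{t_0}$ and the manipulation of $\nabla(x\cdot\nabla u_{t_0})$ require $\nabla u_{t_0}\in H^1(\Om)$, i.e. $u_{t_0}\in H^2(\Om)$, which is exactly why Hypothesis \ref{h1}(i'),(iii') — rather than the weaker (i),(iii) sufficing for Lemma \ref{prelmon} — is imposed here; one should also check that $f=\langle\wti W u_{t_0},u_{t_0}\rangle\in W^{1,1}(\Om)$ with boundary trace $\langle(V(t_0\,\cdot)-\lambda_0)u_{t_0},u_{t_0}\rangle$ (immediate from $V\in C^1$ and $u_{t_0}\in H^1$), that $\nabla u_{t_0}$ has an $L^2(\dOm)$ trace, and that $\nu\in C^{0,r}(\dOm)$, so that all surface integrals in \eqref{3henF1} are well defined. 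A secondary, purely bookkeeping, difficulty is keeping track of the componentwise $\bbR^N$-valued structure of the traces and of the matrix products $\wti W u_{t_0}$ throughout the integrations by parts.
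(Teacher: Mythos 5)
The paper contains no proof of this lemma to compare with: it is simply quoted from \cite[Lemma 5.5]{CJLS14}. Your plan — start from \eqref{mqq} of Lemma \ref{prelmon}, use the $H^2$ regularity supplied by (i'),(iii') to identify $\gaN u_{t_0}$ with $\gaN^{\rm s}u_{t_0}$ and turn the duality pairing into a surface integral, write $\tfrac1{t_0}\dot V_{\lambda_0,t}\big|_{t=t_0}=2\wti W+x\cdot\nabla\wti W$ with $\wti W=V(t_0\cdot)-\lambda_0$, and then eliminate all interior integrals via the equation $\wti W u_{t_0}=t_0^{-2}\Delta u_{t_0}$, Green's identity, the divergence theorem for $\int_\Om x\cdot\nabla f$, and the Rellich identity — is exactly the natural derivation, and every individual identity you invoke is correct, including the cancellation of the interior Dirichlet-energy terms with coefficients $\tfrac{d-2}{t_0^2}$ and $\tfrac{2-d}{t_0^2}$ and the observation that the boundary condition $q\in\cG$ is never used.

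The one genuine problem is your closing claim that the assembly ``yields \eqref{3henF1}''. It does not, quite: the only source of the potential term on the boundary is $\int_\Om x\cdot\nabla\langle\wti W u_{t_0},u_{t_0}\rangle\,dx$, which produces it with coefficient $1$, whereas every other boundary term inherits the factor $1/t_0^2$ from the substitution $\wti W u_{t_0}=t_0^{-2}\Delta u_{t_0}$. Carrying your outline to the end therefore gives
\begin{equation*}
\begin{split}
\mathfrak{m}_{t_0}(q,q)&=\frac1{t_0^2}\int_{\dOm}\Big(\big(\nabla u_{t_0}\cdot\nabla u_{t_0}\big)(\nu\cdot x)-2\langle\nabla u_{t_0}\cdot x,\gaN^{\rm s}u_{t_0}\rangle_{\bbR^N}\\
&\quad+(1-d)\langle\gaN^{\rm s}u_{t_0},u_{t_0}\rangle_{\bbR^N}+t_0^2\,\langle(V(t_0x)-\lambda_0)u_{t_0},u_{t_0}\rangle_{\bbR^N}(\nu\cdot x)\Big)\,dx,
\end{split}
\end{equation*}
i.e.\ the last term carries an extra $t_0^2$ compared with \eqref{3henF1} as printed. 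Your computation, not the printed constant, is the internally consistent one: \eqref{mqq} is confirmed independently of the symplectic machinery by the Kato-type expansion in the proof of Theorem \ref{gH} (cf.\ \eqref{eqn4.57} and \eqref{1.17}), and a simple test with $N=1$, $V\equiv c$, $\Theta=0$ (Neumann), $t_0\neq1$, where $u_{t_0}=\phi$ is a normalized Neumann eigenfunction, $-\Delta\phi=\mu\phi$, $\gaN^{\rm s}\phi=0$, $\lambda_0=c+\mu/t_0^2$, gives $\mathfrak m_{t_0}(q,q)=-2\mu/t_0^2$ from \eqref{mqq}; this matches the displayed formula via the classical Rellich identity $\int_{\dOm}(\nu\cdot x)\big(\nabla\phi\cdot\nabla\phi-\mu\phi^2\big)\,dx=-2\mu$, but not \eqref{3henF1} taken literally, the discrepancy being $\mu\,(1-t_0^{-2})\,t_0^{-2}\int_{\dOm}(\nu\cdot x)\phi^2\,dx$. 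So you should either display the final assembly and flag the missing factor $t_0^2$ in the last term of \eqref{3henF1} (and correspondingly in \eqref{henF1}) as a transcription slip — it is invisible in the Dirichlet case of Corollary \ref{tmon}, where that term vanishes, and disappears when $t_0=1$ — or accept that, as written, your argument proves a formula that differs from the stated one in that single coefficient.
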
 
An eigenfunction $u_{t_0}$ of the operator  $L^{t_0}_{\cG}$ from \eqref{L_t} corresponding to its eigenvalue $t_0^2\lambda_0$ satisfies $u_{t_0}\in H^2(\Om)$ by the assumptions in the lemma and therefore the strong Neumann trace $\gaN^{\rm s}u_{t_0}$ is defined. 

Finally, the Maslov crossing form for the path $t\mapsto\Upsilon(\lambda_0,t)=\tr_t(\cK_{\lambda_0,t})$ when $t\in[\tau,1]$ and $\cG=\cH_D=\{(0,g): g\in H^{-1/2}(\dOm)\}$ is the Dirichlet subspace can be expressed as follows, see \cite[Corollary 5.7]{CJLS14}.

\begin{corollary}\lb{tmon} Assume Hypothesis \ref{h1} with (i'), $\cG=\cH_D$ and  (iv'). Fix $\lambda_0\in\mathbb{R}$ and consider the path $t\mapsto\Upsilon(\lambda_0,t)=\tr_t(\cK_{\lambda_0,t})$. Then any crossing $t_0\in[\tau,1]$ is negative. Specifically, if $q\in \tr_{t_0}(\cK_{\lambda_0,t_0})\cap\cH_D$ and $u_{t_0}\in\dom\big(L^{t_0}_{\cG}\big)=H^2(\Om)\cap H^1_0(\Om)$ are such that $(-\Delta +t_0^2V(t_0x)-t_0^2\lambda_0)u_{t_0}=0$ and $q=\tr_{t_0}u_{t_0}$, then the value of the Maslov crossing form is given as follows:
\beq\lb{henF}
\mathfrak{m}_{t_0}(q,q)=-\frac1{t_0^2}\int_{\dOm}\|\gaN^{\rm s} u_{t_0}\|_{\bbR^N}^2(\nu\cdot x)\,dx<0.
\enq
\end{corollary}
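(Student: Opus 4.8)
The plan is to specialize the general identity \eqref{3henF1} of Lemma \ref{preltmon} to the Dirichlet subspace $\cG=\cH_D$, where the boundary condition annihilates all but one effective term. First I would record that, since $\dOm\in C^{1,r}$ by Hypothesis \ref{h1}(i') and $\cG=\cH_D$ (so that the operator $\Theta=0$), Theorem \ref{GMthmH2} applies and yields $\dom\big(L^{t_0}_\cG\big)=H^2(\Om)\cap H^1_0(\Om)$; in particular Hypothesis \ref{h1}(iii') holds automatically in this setting and the strong Neumann trace $\gaN^{\rm s}u_{t_0}$ appearing in \eqref{3henF1} is well defined. By Lemma \ref{prelmon} (together with Lemma \ref{GevL}), at a crossing $t_0$ and for any nonzero $q\in\tr_{t_0}(\cK_{\lambda_0,t_0})\cap\cH_D$ there is a unique $u_{t_0}\in\cK_{\lambda_0,t_0}$ with $\tr_{t_0}u_{t_0}=q$; it solves $(-\Delta+t_0^2V(t_0x)-t_0^2\lambda_0)u_{t_0}=0$, and since $q\in\cH_D$ it satisfies $\gaD u_{t_0}=0$, so $u_{t_0}\in H^1_0(\Om)\cap\dom\big(L^{t_0}_\cG\big)\subset H^2(\Om)$ and Lemma \ref{preltmon} is applicable to it.

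The key observation is that the Dirichlet condition $\gaD u_{t_0}=u_{t_0}\big|_{\dOm}=0$ forces the tangential derivatives of each component $u_{t_0,n}$ to vanish on $\dOm$, so the boundary gradient is purely normal: $\nabla u_{t_0,n}\big|_{\dOm}=(\gaN^{\rm s}u_{t_0,n})\,\nu$ for $n=1,\dots,N$, with $|\nu|=1$. Substituting this into the four boundary integrands of \eqref{3henF1} I would evaluate them as follows: the terms $(1-d)\langle\gaN^{\rm s}u_{t_0}(x),u_{t_0}(x)\rangle_{\bbR^N}$ and $\langle(V(t_0x)-\lambda_0)u_{t_0}(x),u_{t_0}(x)\rangle_{\bbR^N}(\nu\cdot x)$ vanish identically because $u_{t_0}(x)=0$ on $\dOm$; the first term equals $(\nabla u_{t_0}\cdot\nabla u_{t_0})(\nu\cdot x)=\|\gaN^{\rm s}u_{t_0}\|_{\bbR^N}^2(\nu\cdot x)$; and, using $\nabla u_{t_0,n}\cdot x=(\gaN^{\rm s}u_{t_0,n})(\nu\cdot x)$ on $\dOm$, the second term equals $-2\langle\nabla u_{t_0}\cdot x,\gaN^{\rm s}u_{t_0}(x)\rangle_{\bbR^N}=-2\|\gaN^{\rm s}u_{t_0}\|_{\bbR^N}^2(\nu\cdot x)$. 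Hence \eqref{3henF1} collapses to
\[
\mathfrak{m}_{t_0}(q,q)=\frac1{t_0^2}\int_{\dOm}\big(\|\gaN^{\rm s}u_{t_0}\|_{\bbR^N}^2-2\|\gaN^{\rm s}u_{t_0}\|_{\bbR^N}^2\big)(\nu\cdot x)\,dx=-\frac1{t_0^2}\int_{\dOm}\|\gaN^{\rm s}u_{t_0}\|_{\bbR^N}^2(\nu\cdot x)\,dx,
\]
which is the asserted identity.

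It then remains to establish strict negativity. Since $\Omega$ is star-shaped with respect to $0\in\Omega$ and $\dOm\in C^1$, one has $\nu(x)\cdot x\ge0$ for all $x\in\dOm$; moreover no arc of $\dOm$ can lie on a ray emanating from $0$ (such an arc would be an integral curve of the Euler field $x\mapsto x$), so $\{x\in\dOm:\nu(x)\cdot x=0\}$ has zero surface measure. On the other hand, $q\neq0$ gives $u_{t_0}\not\equiv0$, and since $\gaD u_{t_0}=0$ the Cauchy data of $u_{t_0}$ on $\dOm$ are exhausted by $\gaN^{\rm s}u_{t_0}$; unique continuation for the Schr\"odinger operator $-\Delta+t_0^2V(t_0\cdot)-t_0^2\lambda_0$ then forces $\gaN^{\rm s}u_{t_0}\not\equiv0$, so the $L^2(\dOm)$-function $\gaN^{\rm s}u_{t_0}$ is nonzero on a set of positive surface measure. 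Consequently the integrand $\|\gaN^{\rm s}u_{t_0}\|_{\bbR^N}^2(\nu\cdot x)$ is nonnegative and strictly positive on a set of positive measure, so $\mathfrak{m}_{t_0}(q,q)<0$; as this holds for every nonzero $q$ in the intersection, the crossing $t_0$ is negative.

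The only genuinely delicate points I would have to argue with care are the identification of $\gaN^{\rm s}u_{t_0}$ with the normal derivative of the $H^2$-representative and the vanishing of its tangential gradient on $\dOm$ (this is exactly where the enhanced boundary regularity of Hypothesis \ref{h1}(i') enters), together with the unique-continuation input needed to upgrade $\le0$ to strict inequality; the rest of the argument is a direct substitution into \eqref{3henF1}.
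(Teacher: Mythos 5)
Your derivation of the identity in \eqref{henF} is correct and is the natural specialization of Lemma \ref{preltmon}: since $u_{t_0}\in H^2(\Om)$ and $\gaD u_{t_0}=0$, the boundary trace of $\nabla u_{t_0}$ is purely normal, $\nabla u_{t_0}\big|_{\dOm}=(\gaN^{\rm s}u_{t_0})\,\nu$, so the last two integrands in \eqref{3henF1} vanish and the first two combine to $-\|\gaN^{\rm s}u_{t_0}\|_{\bbR^N}^2(\nu\cdot x)$. (The paper itself does not reprove this corollary but quotes it from \cite[Corollary 5.7]{CJLS14}, so there is no in-paper proof to compare with; your computation is the expected one.)

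The genuine gap is in the strict inequality. Your key claim that $\{x\in\dOm:\,\nu(x)\cdot x=0\}$ has zero surface measure, justified by ``no arc of $\dOm$ can lie on a ray emanating from $0$'', is not a consequence of Hypothesis \ref{h1}(i'): star-shapedness with respect to a single interior point (not with respect to a ball) only yields $\nu\cdot x\ge 0$ a.e., and a $C^{1,r}$ star-shaped boundary may well contain radial pieces. For instance, a suitably smoothed ``Pac-Man'' $\{r<1,\ \theta\in(0,3\pi/2)\}\cup B_\varepsilon(0)$ is star-shaped with respect to the origin and can be given a $C^{1,r}$ boundary that still contains (slightly shortened) radial mouth edges, on which $\nu\cdot x\equiv 0$ on a set of positive measure. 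Consequently, nonnegativity of the integrand together with $\gaN^{\rm s}u_{t_0}\not\equiv 0$ does not yet give strict positivity of the integral: a priori $\gaN^{\rm s}u_{t_0}$ could be essentially supported in $\{\nu\cdot x=0\}$. Note also that your appeal to unique continuation to conclude $\gaN^{\rm s}u_{t_0}\not\equiv 0$ is superfluous, since $q=(0,t_0^{-1}\gaN u_{t_0})\ne 0$ gives this directly; unique continuation is instead exactly what is needed where the gap sits. A correct way to close it: $\nu\cdot x$ is continuous on the $C^{1,r}$ boundary, so $\Gamma=\{x\in\dOm:\,\nu(x)\cdot x>0\}$ is relatively open, and it is nonempty because $\int_{\dOm}\nu\cdot x\,dx=d\,|\Om|>0$; if the integral in \eqref{henF} were zero, then $\gaN^{\rm s}u_{t_0}=0$ a.e.\ on $\Gamma$, and since also $\gaD u_{t_0}=0$ there, the solution of $(-\Delta+t_0^2V(t_0\cdot)-t_0^2\lambda_0)u_{t_0}=0$ has vanishing Cauchy data on the open boundary portion $\Gamma$, whence $u_{t_0}\equiv 0$ by unique continuation, contradicting $q\ne 0$. (Alternatively, strengthening the geometry to star-shapedness with respect to a ball centered at $0$ gives $\nu\cdot x\ge\varepsilon>0$ a.e.\ and makes strictness immediate from $\gaN^{\rm s}u_{t_0}\ne 0$.)
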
 


Similar crossing form computations have appeared in \cite[Eqn.(5.32)]{DJ11}, \cite[Eqn.(32)]{PW} and \cite[Eqn.(35)]{CJM}. In fact, for $V=0$ the right-hand side of \eqref{henF} is the well-known Rayleigh--Hadamard formula for the derivative of a simple Dirichlet eigenvalue with respect to a domain perturbation; see e.g. \cite[Chapter 5]{Henry} and references therein.


\section{The proofs of the main results}\label{sec:mainres}

\subsection{Asymptotic expansions as $t\to t_0$}\lb{NC}
In this subsection we derive asymptotic formulas for the eigenvalues of the operator $L^t_{\cG}$ defined in \eqref{L_t}. Fix $t_0\in[0,1]$ and let $\lambda^{t_0}$ be an eigenvalue of $L^{t_0}_{\cG}$ of multiplicity $m$.  Our ultimate goal, cf.\ \eqref{eq1.84} and \eqref{eq1.85}, is an asymptotic formula for the eigenvalues $\lambda_j(t)$, $j=1,\dots,m$, of $L^{t}_{\cG}$ that bifurcate from the eigenvalue $\lambda^{t_0}$ for $t$ near $t_0$. First, we need some preliminaries. 
The following technical result can be found, e.g., in \cite[Lemma 2.5]{GM08}.
\begin{lemma}\label{var}
Assume that $\Omega$ is a nonempty open bounded Lipschitz domain in $\bbR^d$ for $d\ge2$. Then
for every $\varepsilon>0$ there exits a $\beta(\varepsilon)>0$ such that $\beta(\varepsilon)=O(1/\varepsilon)$ as $\varepsilon\to 0$ and 
\begin{equation}
\|\gaD u\|_{L^2(\dOm)}^2\le\varepsilon\|\nabla u\|_{L^2(\Om)}^2+\beta(\varepsilon)\| u\|_{L^2(\Om)}^2
\end{equation}
for any $u\in H^1(\Omega)$.
\end{lemma}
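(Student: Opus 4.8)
The plan is to derive the estimate from the divergence theorem applied to the quadratic vector field $\langle u,u\rangle_{\bbR^N}h$, where $h$ is a fixed smooth vector field transversal to $\dOm$; this is the classical route to trace inequalities carrying an explicit small parameter. \emph{Step 1 (transversal vector field).} I would first invoke the standard fact that on a bounded Lipschitz domain there exist $h\in C^1(\overline{\Om};\bbR^d)$ and $c_0>0$ with $h(x)\cdot\nu(x)\ge c_0$ for $d\sigma$-a.e.\ $x\in\dOm$. When $\Om$ is star-shaped one may simply take $h(x)=x-x_*$ for an appropriate center $x_*$; in general one pastes together, by a partition of unity subordinate to a boundary atlas, the downward coordinate directions of the local Lipschitz-graph representations of $\dOm$, the positivity of $h\cdot\nu$ in each chart being controlled by the Lipschitz constant.

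\emph{Step 2 (Gauss--Green for smooth $u$).} For $u\in C^\infty(\overline{\Om};\bbR^N)$ the divergence theorem gives
\[
c_0\,\|\gaD u\|_{L^2(\dOm)}^2\le\int_{\dOm}\langle u,u\rangle_{\bbR^N}(h\cdot\nu)\,d\sigma=\int_\Om\operatorname{div}\!\big(\langle u,u\rangle_{\bbR^N}h\big)\,dx=\int_\Om\langle u,u\rangle_{\bbR^N}\operatorname{div}h\,dx+2\sum_{n=1}^N\int_\Om u_n\langle\nabla u_n,h\rangle_{\bbR^d}\,dx.
\]
Estimating the first term by $\|\operatorname{div}h\|_{L^\infty(\Om)}\|u\|_{L^2(\Om)}^2$, the second (by Cauchy--Schwarz) by $2\|h\|_{L^\infty(\Om)}\|u\|_{L^2(\Om)}\|\nabla u\|_{L^2(\Om)}$, and then splitting the latter by Young's inequality as $c_0\varepsilon\|\nabla u\|_{L^2(\Om)}^2+c_0^{-1}\varepsilon^{-1}\|h\|_{L^\infty(\Om)}^2\|u\|_{L^2(\Om)}^2$, I divide by $c_0$ and arrive at
\[
\|\gaD u\|_{L^2(\dOm)}^2\le\varepsilon\,\|\nabla u\|_{L^2(\Om)}^2+\Big(\tfrac{\|\operatorname{div}h\|_{L^\infty(\Om)}}{c_0}+\tfrac{\|h\|_{L^\infty(\Om)}^2}{c_0^2}\cdot\tfrac1\varepsilon\Big)\|u\|_{L^2(\Om)}^2.
\]

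\emph{Step 3 (density and conclusion).} Since $C^\infty(\overline{\Om})$ is dense in $H^1(\Om)$ for Lipschitz $\Om$ and both sides above depend continuously on $u$ in the $H^1(\Om)$-norm --- the left side because $\gaD\colon H^1(\Om)\to L^2(\dOm)$ is bounded --- the inequality persists for all $u\in H^1(\Om)$. Taking $\beta(\varepsilon)=\|\operatorname{div}h\|_{L^\infty(\Om)}/c_0+\|h\|_{L^\infty(\Om)}^2/(c_0^2\varepsilon)$ gives $\beta(\varepsilon)>0$ and $\beta(\varepsilon)=O(1/\varepsilon)$ as $\varepsilon\to0$, proving the lemma.

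The only genuinely delicate point is Step 1 --- producing the globally $C^1$, boundary-transversal vector field on a general Lipschitz domain (for the star-shaped domains actually used in this paper it is immediate). The rest is the routine divergence-theorem-and-Young computation. One could instead obtain the inequality for \emph{some} $\beta(\varepsilon)$ from the trace theorem $\gaD\in\cB(H^1(\Om),H^{1/2}(\dOm))$ together with compactness of $H^{1/2}(\dOm)\hookrightarrow L^2(\dOm)$ via Ehrling's lemma, but that soft argument does not deliver the quantitative rate $\beta(\varepsilon)=O(1/\varepsilon)$, so the vector-field proof --- or, equivalently, the interpolation inequality $\|\gaD u\|_{L^2(\dOm)}^2\le C\|u\|_{L^2(\Om)}\|u\|_{H^1(\Om)}$ it yields --- is the one to carry out.
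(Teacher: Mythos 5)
Your proof is correct, but note that the paper itself does not prove this lemma at all: it is quoted verbatim from \cite[Lemma 2.5]{GM08}, so there is no internal argument to compare against. Your route --- the Rellich/Ne\v{c}as-type identity obtained by applying the divergence theorem to $\langle u,u\rangle_{\bbR^N}h$ for a $C^1$ vector field $h$ with $h\cdot\nu\ge c_0>0$ on $\dOm$, followed by Young's inequality and density of $C^\infty(\overline\Om)$ in $H^1(\Om)$ --- is a standard, self-contained way to get exactly the multiplicative trace bound $\|\gaD u\|_{L^2(\dOm)}^2\le C\|u\|_{L^2(\Om)}\|u\|_{H^1(\Om)}$, which is what produces the quantitative rate $\beta(\varepsilon)=O(1/\varepsilon)$; your closing remark is also right that a soft Ehrling-type compactness argument would give some $\beta(\varepsilon)$ but not this rate (and the route through the fractional trace estimate $\gaD\in\cB(H^s(\Om),L^2(\dOm))$, $s>1/2$, plus interpolation only yields $\beta(\varepsilon)=O(\varepsilon^{-s/(1-s)})$). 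One small caveat in Step 1: star-shapedness with respect to a single point does not by itself give a uniform lower bound on $(x-x_*)\cdot\nu$ (that requires star-shapedness with respect to a ball), so for the lemma as stated --- which assumes only a Lipschitz domain --- you should lean on the partition-of-unity construction of the transversal field, which is the standard fact you correctly identify and which covers all cases; with that understood, the argument is complete.
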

We now derive an estimate for the operator form $\mathfrak{l}^t_\cG$ defined in \eqref{OPFORM}.
\begin{lemma}\lb{lem:Gamma4} Assume Hypothesis \ref{h1} (i), (ii) and (iii). There exist positive constants  $\Lambda=\Lambda(\Theta )$ and $c(\Theta)$ such that \[(\mathfrak{l}_{\cG} ^{t}+\Lambda)(u,u)\geq c(\Theta)\|u\|_{H^1(\Om)}^2 \text{ for any $u\in\dom(\mathfrak{l}_{\cG} ^{t})$ and all $t\in[0,1]$}.\] 
\end{lemma}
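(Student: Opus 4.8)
The plan is to produce a uniform (in $t\in[0,1]$) coercivity estimate for the shifted form $\mathfrak{l}_{\cG}^t+\Lambda$. I would split the argument according to whether $\cG$ is the Dirichlet subspace (where $\Theta=0$ and the statement is essentially the standard G\r{a}rding inequality) or a Neumann-based subspace (the genuine case). Assume the latter. Starting from the definition \eqref{dfnlGt},
\[
\mathfrak{l}_{\cG}^t(u,u)=\|\nabla u\|_{L^2(\Om)}^2+t^2\langle V(tx)u,u\rangle_{L^2(\Om)}-t\langle\Theta\gaD u,\gaD u\rangle_{1/2},
\]
the first term is the good term, while the potential term and the boundary term must be absorbed. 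For the potential term, since $V\in C^0(\overline\Om)$ by Hypothesis \ref{h1}(iv) we have $\|V\|_\infty<\infty$, and $|t^2\langle V(tx)u,u\rangle_{L^2(\Om)}|\le \|V\|_\infty\|u\|_{L^2(\Om)}^2$ uniformly in $t\in[0,1]$; this is harmless because it only costs a multiple of $\|u\|_{L^2}^2$, which the shift $\Lambda$ will cover.

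The main point is the boundary term $t\langle\Theta\gaD u,\gaD u\rangle_{1/2}=t\,\aT(\gaD u,\gaD u)$, which I would estimate using Hypothesis \ref{h1}(iii)(a), namely $\aT(f,f)\le c_\Theta\|f\|_{L^2(\dOm)}^2$. This gives
\[
-t\langle\Theta\gaD u,\gaD u\rangle_{1/2}\ge -t\,c_\Theta\|\gaD u\|_{L^2(\dOm)}^2\ge -c_\Theta\|\gaD u\|_{L^2(\dOm)}^2
\]
for $t\in[0,1]$. Now apply Lemma \ref{var} with a small $\varepsilon>0$ chosen so that $c_\Theta\varepsilon\le \tfrac12$: this bounds $c_\Theta\|\gaD u\|_{L^2(\dOm)}^2\le \tfrac12\|\nabla u\|_{L^2(\Om)}^2+c_\Theta\beta(\varepsilon)\|u\|_{L^2(\Om)}^2$. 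Substituting, we obtain
\[
\mathfrak{l}_{\cG}^t(u,u)\ge \tfrac12\|\nabla u\|_{L^2(\Om)}^2-\big(\|V\|_\infty+c_\Theta\beta(\varepsilon)\big)\|u\|_{L^2(\Om)}^2
\]
uniformly in $t\in[0,1]$. Adding $\Lambda\|u\|_{L^2(\Om)}^2$ with $\Lambda:=\|V\|_\infty+c_\Theta\beta(\varepsilon)+\tfrac12$ yields $(\mathfrak{l}_{\cG}^t+\Lambda)(u,u)\ge\tfrac12\|\nabla u\|_{L^2(\Om)}^2+\tfrac12\|u\|_{L^2(\Om)}^2=\tfrac12\|u\|_{H^1(\Om)}^2$, so one may take $c(\Theta)=\tfrac12$. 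Note that the Dirichlet case is subsumed: there the boundary term is absent, $H^1_\cG(\Om)=H^1_0(\Om)$, and the same computation with $c_\Theta=0$ works (one does not even need Lemma \ref{var}).

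The only slightly delicate point is making sure the crucial inequality in Hypothesis \ref{h1}(iii)(a) is used in the correct direction: it is an upper bound on $\aT$, which is exactly what is needed to lower-bound $-t\aT(\gaD u,\gaD u)$, and the sign $t\ge 0$ is what lets us use it; there is no lower-bound hypothesis on $\aT$ needed here. I expect no real obstacle — the whole content is the interplay of the boundary trace estimate (Lemma \ref{var}), the semiboundedness from above of $\aT$, and the compactness of $[0,1]$ giving uniformity in $t$ — but care should be taken to record explicitly that all constants $\Lambda$ and $c(\Theta)$ depend only on $\Om$, $\|V\|_\infty$, and $c_\Theta$, hence are independent of $t$, which is the assertion of the lemma.
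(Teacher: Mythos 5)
Your proposal is correct and follows essentially the same route as the paper: bound the potential term by $\|V\|_{L^\infty}\|u\|_{L^2(\Om)}^2$, use the upper semiboundedness $\aT(f,f)\le c_\Theta\|f\|_{L^2(\dOm)}^2$ together with $t\le 1$ to control the boundary term, absorb the trace via Lemma \ref{var} with $\varepsilon$ small enough that $c_\Theta\varepsilon<1$, and shift by $\Lambda$; the Dirichlet case is handled trivially in both arguments. The only differences are cosmetic choices of constants ($c(\Theta)=\tfrac12$ versus $c(\Theta)<1-c_\Theta\varepsilon$), so no changes are needed.
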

\begin{proof}
Let $u\in\dom(\mathfrak{l}_{\cG} ^{t})$. If $\cG=\cH_D$ then $\Lambda=\| V\|_{L^{\infty}(\Om)}+1$ and $c(\Theta)=1$ will do the job since $(\mathfrak{l}_{\cG} ^{t}+\Lambda)(u,u)\geq\|\nabla u\|_{L^2(\Om)}^2-\|V\|_{L^\infty(\Om)}\|u\|_{L^2(\Om)}^2+\Lambda\|u\|_{L^2(\Om)}^2$. If $\cG=\gr(\Theta)$ is a Neumann-based subspace then 
let us fix $\varepsilon\in(0,{1}/{c_\Theta})$. Then \eqref{propTheta2}, $t\le1$ and Lemma \ref{var} yield
\begin{equation}\lb{eq6.5}
\begin{split}
-t\langle \Theta \gaD u,\gaD u\rangle_{1/2}&\ge-c_\Theta\|\gaD u\|_{L^2(\dOm)}^2
\ge-c_\Theta\big(\varepsilon\|\nabla u\|_{L^2(\Om)}^2+\beta(\varepsilon)\| u\|_{L^2(\Om)}^2\big)
\end{split}
\end{equation}
for some $\beta(\varepsilon)>0$.
Choose $\Lambda\geq\| V\|_{L^{\infty}(\Om)}+(1+c_\Theta\beta(\varepsilon))$ and $c(\Theta)<1-c_\Theta\varepsilon$. Then for any $t\in[0,1]$ the estimate
$\langle (t^2V(tx)+\Lambda)u,u\rangle_{L^2}\ge \big(-\|V\|_{L^\infty(\Om)}\|u\|_{L^2(\Om)}^2+\Lambda\|u\|^2_{L^2(\Om)}\big)$
and equation \eqref{eq6.5}  yield
\begin{align}
\begin{split}
(\mathfrak{l}_{\cG} ^{t}+\Lambda)(u,u)&=\langle\nabla u,\nabla u\rangle_{L^2(\Om)}+\langle (t^2V(t x)+\Lambda)u,u\rangle_{L^2(\Om)}
 -t\langle\Theta\gaD  u,\gaD   u\rangle_{1/2}\\& \ge\|\nabla u\|_{L^2(\Om)}^2+(1+c_\Theta\beta(\varepsilon))\| u\|_{L^2(\Om)}^2-c_\Theta(\varepsilon\|\nabla u\|_{L^2(\Om)}^2+\beta(\varepsilon)\| u\|_{L^2(\Om)}^2)\\&\ge(1-c_\Theta\varepsilon)\|\nabla u\|_{L^2(\Om)}^2+\| u\|_{L^2(\Om)}^2\ge c(\Theta)\| u\|_{H^1(\Om)}^2.
\end{split}
\end{align}
\end{proof}
Following the general discussion of holomorphic families of closed unbounded operators in \cite[Section VII.1.2]{Kato}, we introduce our next definition.
\begin{definition}\label{cont}
A family of closed operators $\{T(t)\}_{t\in \Sigma}$ on a Hilbert space $\cX$ is said to be continuous on an interval $\Sigma\subset\bbR$ if there exists a Hilbert space $\cX'$ and continuous families of operators $\{U(t)\}_{t\in \Sigma}$ and $\{W(t)\}_{t\in \Sigma}$ in $\cB(\cX',\cX)$ such that $U(t)$ is a one-to-one map of $\cX'$ onto $\dom(T(t))$ and the identity $T(t)U(t)=W(t)$ holds for all $t\in \Sigma$.
\end{definition}
We recall that Hypothesis \ref{h1} (iv) yields
\begin{equation}\label{unif1}
\sup_{x\in\Om}\|V(t x)-V(t_0x)\|_{\bbR^{N\times N}}\to 0 \,\,\,\hbox{as}\,\,\, t\to t_0\,\text{ 
for any $t_0\in[0,1]$}.\end{equation}

\begin{lemma}\lb{4.5}
Assume Hypothesis \ref{h1} (i),  (iii) and (iv). Then the family $\{L_{\cG}^t\}_{t\in \Sigma}$ is continuous near $t_0$, that is, on some interval $\Sigma_{t_0}$ that contains $t_0$.
\end{lemma}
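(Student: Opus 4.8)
The plan is to verify Definition \ref{cont} directly by exhibiting a fixed auxiliary Hilbert space $\cX'$ together with continuous families $U(t)$ and $W(t)$ that trivialize the domains $\dom(L^t_\cG)$. The natural choice is $\cX' = L^2(\Om)$, and to build $U(t)$ from the resolvent of $L^t_\cG$ at a spectral point below all the operators simultaneously. Concretely, by Lemma \ref{lem:Gamma4} the forms $\mathfrak{l}^t_\cG + \Lambda$ are uniformly coercive on $H^1_\cG(\Om)$ for $t\in[0,1]$, so $-\Lambda$ lies in the resolvent set of every $L^t_\cG$ and the operators $(L^t_\cG + \Lambda)^{-1}\in\cB(L^2(\Om))$ are uniformly bounded, with range exactly $\dom(L^t_\cG)$. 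I would set
\begin{equation*}
U(t) = (L^t_\cG + \Lambda)^{-1}, \qquad W(t) = I_{L^2(\Om)} - \Lambda\, (L^t_\cG + \Lambda)^{-1},
\end{equation*}
so that $U(t)$ is a bijection of $\cX' = L^2(\Om)$ onto $\dom(L^t_\cG)$ and the identity $L^t_\cG U(t) = (L^t_\cG + \Lambda)U(t) - \Lambda U(t) = W(t)$ holds for all $t$. It then suffices to prove that $t\mapsto (L^t_\cG + \Lambda)^{-1}$ is continuous in $\cB(L^2(\Om))$ near $t_0$; continuity of $W(t)$ follows immediately.

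For the continuity of the resolvent I would argue at the level of the forms. Write $R_t = (L^t_\cG + \Lambda)^{-1}$ and, for $f\in L^2(\Om)$, set $u = R_t f$, $u_0 = R_{t_0} f$, both in $H^1_\cG(\Om)$. Using the form identity \eqref{OPFORM} one gets, for every $v\in H^1_\cG(\Om)$,
\begin{equation*}
(\mathfrak{l}^{t}_\cG + \Lambda)(u - u_0, v) = (\mathfrak{l}^{t_0}_\cG + \Lambda)(u_0,v) - (\mathfrak{l}^{t}_\cG + \Lambda)(u_0,v) = \langle (V^{t_0} - V^{t})u_0, v\rangle_{L^2(\Om)} + (t-t_0)\langle \Theta\gaD u_0,\gaD v\rangle_{1/2} + (\text{l.o.t.}),
\end{equation*}
where the lower-order terms collect the remaining $t$-dependence of $\mathfrak{l}^t_\cG$ (in the Dirichlet case the $\Theta$-term is absent). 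Testing with $v = u - u_0$ and invoking the uniform coercivity from Lemma \ref{lem:Gamma4} on the left, one bounds $\|u - u_0\|_{H^1(\Om)}^2$ by the right-hand side; the first term is controlled by \eqref{unif1} times $\|u_0\|_{L^2(\Om)}\|u-u_0\|_{L^2(\Om)}$, and the $\Theta$-term by $|t-t_0|$ times $\|\Theta\|_{\cB(H^{1/2},H^{-1/2})}\|\gaD u_0\|_{H^{1/2}}\|\gaD(u-u_0)\|_{H^{1/2}}$, which is absorbed using the continuity of $\gaD\colon H^1(\Om)\to H^{1/2}(\dOm)$. Since $\|u_0\|_{H^1(\Om)} = \|R_{t_0}f\|_{H^1(\Om)} \le C\|f\|_{L^2(\Om)}$ uniformly, this yields $\|R_t f - R_{t_0} f\|_{L^2(\Om)} \le \|R_t f - R_{t_0} f\|_{H^1(\Om)} \le \varepsilon(|t-t_0|)\|f\|_{L^2(\Om)}$ with $\varepsilon(|t-t_0|)\to 0$ by \eqref{unif1}, which is precisely norm continuity of $t\mapsto R_t$ in $\cB(L^2(\Om))$ at $t_0$. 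Restricting to an interval $\Sigma_{t_0}\ni t_0$ on which all the estimates are uniform completes the argument.

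The main obstacle I anticipate is the bookkeeping of the $t$-dependence of the boundary term $-t\langle\Theta\gaD u,\gaD v\rangle_{1/2}$ in $\mathfrak{l}^t_\cG$: unlike the potential term, whose continuity is handed to us by \eqref{unif1}, this term must be handled by hand, and one needs the trace estimate of Lemma \ref{var} (or simply boundedness of $\gaD$ on $H^1$) to keep it subordinate to the coercivity constant $c(\Theta)$ — exactly as in the proof of Lemma \ref{lem:Gamma4}. A secondary technical point is confirming that $U(t)$ genuinely maps \emph{onto} $\dom(L^t_\cG)$ and is one-to-one: this is automatic because $L^t_\cG + \Lambda\colon\dom(L^t_\cG)\to L^2(\Om)$ is a bijection by Theorem \ref{th26GM} together with $-\Lambda\notin\Sp(L^t_\cG)$, so its inverse is a bijection in the reverse direction. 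Everything else is routine once the form-difference estimate above is in place.
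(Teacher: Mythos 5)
Your proposal is correct, and its overall scaffolding coincides with the paper's: the paper verifies Definition \ref{cont} with exactly the same choices $\cX'=L^2(\Om)$, $U(t)=(L^t_\cG+\Lambda I_{L^2(\Om)})^{-1}$ and $W(t)=I_{L^2(\Om)}-\Lambda U(t)$, so everything reduces, as in your write-up, to norm continuity of $t\mapsto (L^t_\cG+\Lambda)^{-1}$ in $\cB(L^2(\Om))$. Where you differ is in how that continuity is obtained. The paper conjugates the forms by $G=(L^{t_0}_\cG+\Lambda)^{1/2}$, i.e.\ passes to the bounded forms $\widetilde{\mathfrak{l}}^t_\cG(u,v)=\mathfrak{l}^t_\cG(G^{-1}u,G^{-1}v)$ on $L^2(\Om)\times L^2(\Om)$ and the associated bounded selfadjoint operators $\widetilde{L}^t_\cG$, and then invokes the argument of \cite[Lemma 6.3]{CJLS14} to conclude that $U(t)$ and $W(t)$ are continuous families. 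You instead prove resolvent continuity directly: writing $u=R_tf$, $u_0=R_{t_0}f$, using \eqref{OPFORM} to get $(\mathfrak{l}^t_\cG+\Lambda)(u-u_0,v)=\langle (V^{t_0}-V^t)u_0,v\rangle_{L^2(\Om)}+(t-t_0)\langle\Theta\gaD u_0,\gaD v\rangle_{1/2}$ (note these two terms are the whole difference -- there are no further lower-order terms, since all $t$-dependence of $\mathfrak{l}^t_\cG$ sits in $V^t$ and the factor $t$ on the $\Theta$-term), testing with $v=u-u_0$, and using the $t$-uniform coercivity of Lemma \ref{lem:Gamma4} together with \eqref{unif1}, boundedness of $\Theta$ and of $\gaD\colon H^1(\Om)\to H^{1/2}(\dOm)$, and the uniform bound $\|u_0\|_{H^1(\Om)}\le c(\Theta)^{-1}\|f\|_{L^2(\Om)}$. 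This is a legitimate and more self-contained route: it avoids the square-root operator $G$ and the external reference, and it even yields the slightly stronger continuity of $t\mapsto R_t$ in $\cB(L^2(\Om),H^1(\Om))$; what the paper's factorization through $G$ buys is a uniform treatment that is reused verbatim from \cite{CJLS14} and feeds directly into the resolvent identity \eqref{UV} exploited again in Lemma \ref{rescont}. One cosmetic remark: no absorption of the $\Theta$-term into the coercivity constant is actually needed, since that term already carries the prefactor $|t-t_0|$; a plain bound suffices, exactly as you in effect use it.
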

\begin{proof}
Letting $t=t_0$ in Lemma \ref{lem:Gamma4} yields
\[(\mathfrak{l}_{\cG} ^{t_0}+\Lambda)(u,u)\geq c(\Theta)\|u\|_{H^1(\Om)}\,\text{ for  all $u\in H^1_{\cG}(\Omega)$}.\] 
The selfadjoint operator associated with the form $\mathfrak{l}_{\cG} ^{t_0}+\Lambda$ by Theorem \ref{th26GM} is $L_{\cG}^{t_0}+\Lambda I_{L^2(\Om)}$. 
The operator $L_{\cG}^{t_0}+\Lambda I_{L^2(\Om)}$ is clearly invertible, and 
if $u\in H^1_{\cG}(\Omega)$, then \[(L_{\cG}^{t_0}+\Lambda I_{L^2(\Om)})^{-1/2}u\in\dom\big((L_{\cG}^{t_0}+\Lambda I_{L^2(\Om)})^{1/2}\big)=H^1_{\cG}(\Omega).\]
For the remainder the proof we let $G$ denote the operator
\[
	G = (L_{\cG}^{t_0}+\Lambda I_{L^2(\Om)})^{1/2}\colon H_{\cG}^{1}(\Omega) \rightarrow L^2(\Omega).
\]
We note, cf.\ (B.42) and (B.43) in \cite{GM08}, that
\beq\lb{6.6.1}
\|Gu\|^2_{L^2(\Omega)}=(\mathfrak{l}_{\cG} ^{t_0}+\Lambda )(u,u)\,\text{ for all }\, u\in H^1_{\cG}(\Om).\enq 
By Lemma \ref{lem:Gamma4}, $\|Gu\|^2_{L^2(\Omega)}\geq c(\Theta)\|u\|^2_{H_\cG^1(\Om)} $, hence $G^{-1} = (L_{\cG}^{t_0}+\Lambda I_{L^2(\Om)})^{-1/2}\in\cB(L^2(\Omega),H_{\cG}^{1}(\Omega))$.
Now using \eqref{dfnlGt} we introduce a new sesquilinear form
\begin{align}
&\widetilde{\mathfrak{l}}_{\cG}^t(u,v)=\mathfrak{l}_{\cG} ^t\left(G^{-1}u,G^{-1}v \right),\,
\dom(\widetilde{\mathfrak{l}}_{\cG}^{t})=L^2(\Omega)\times L^2(\Omega).
\end{align}
It is easy to see that $\widetilde{\mathfrak{l}}_{\cG}^{t}$ is bounded on  $L^2(\Omega)\times L^2(\Omega)$ (cf. \cite[Lemma 6.3]{CJLS14}). Let $\widetilde{L}_{\cG}^t\in\cB(L^2(\Omega))$ be the selfadjoint operator associated with $\widetilde{\mathfrak{l}}_{\cG}^{t}$ by the First Representation Theorem \cite[Theorem VI.2.1]{Kato}. Then 
\begin{align}\lb{LLtil}
\langle \widetilde{L}_{\cG}^tu,v\rangle_{L^2(\Om)}=\widetilde{\mathfrak{l}}_{\cG}^{t}(u,v)=\mathfrak{l}_{\cG}^t\left(G^{-1}u,G^{-1}v\right) \,\text{ for all} \,u,v\in L^2(\Omega).\no
\end{align}
Next, following the proof of  \cite[Lemma 6.3]{CJLS14}, one can show that 
\begin{equation}\label{UV}
L_{\cG}^tU(t)=W(t)\,\text{ for $t$ near $t_0$},
\end{equation}
where $U(t)=(L_{\cG}^t+\Lambda I_{L^2(\Omega)})^{-1}$ and $W(t)=I_{L^2(\Om)}-\Lambda U(t)$ are the continuous operator families near $t_0$. Hence, according to Definition \ref{cont}, the family $\{L_{\cG}^t\}$ is continuous near $t_0$.
\end{proof}
We denote by
$R(\zeta,t)=\big(L_{\cG}^t-\zeta I_{L^2(\Om)}\big)^{-1}$, $\zeta\in\bbC\setminus\Sp(L_{\cG}^t)$, $t\in[0,1]$,
the resolvent operator for $L_{\cG}^t$ in $L^2(\Om)$.
\begin{lemma}\label{rescont}
Let $\zeta\in\bbC\setminus\Sp(L_{\cG}^{t_0})$. Then $\zeta\in\bbC\setminus\Sp(L_{\cG}^t)$ for $t$ near $t_0$. Moreover, the function 
$t \mapsto R(\zeta,t)\in\cB(L^2(\Om))$ is continuous for $t$ near $t_0$, uniformly for $\zeta$ in compact subsets of $\bbC\setminus\Sp(L_{\cG}^t)$.
\end{lemma}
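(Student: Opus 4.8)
Here is the plan. The whole argument is the standard stability-of-the-resolvent statement for continuous families of closed operators (cf.\ \cite[Chapter VII]{Kato}), run with the concrete continuous families produced in Lemma \ref{4.5}. Throughout one works with the complexifications of the real operators $L^t_{\cG}$, which is harmless. First I would record what Lemma \ref{4.5} (via Definition \ref{cont} and \eqref{UV}) actually gives near $t_0$: the families $U(t):=(L^t_{\cG}+\Lambda I_{L^2(\Om)})^{-1}$ and $W(t):=I_{L^2(\Om)}-\Lambda U(t)$ are continuous in $\cB(L^2(\Om))$, each $U(t)$ is a bijection of $L^2(\Om)$ onto $\dom(L^t_{\cG})$, and $L^t_{\cG}U(t)=W(t)$. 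Then I introduce the bounded operator
\[
\Phi(\zeta,t):=W(t)-\zeta\,U(t)=(L^t_{\cG}-\zeta I_{L^2(\Om)})\,U(t)\in\cB(L^2(\Om)),
\]
which is affine (hence entire) in $\zeta$ and, since $U(t),W(t)$ are norm-continuous in $t$, is continuous in $t$ uniformly for $\zeta$ in bounded subsets of $\bbC$.

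The key reduction is: for each $t$ near $t_0$ and each $\zeta\in\bbC$, one has $\zeta\notin\Sp(L^t_{\cG})$ if and only if $\Phi(\zeta,t)$ is boundedly invertible on $L^2(\Om)$, and in that case $R(\zeta,t)=U(t)\,\Phi(\zeta,t)^{-1}$. This follows because $\Phi(\zeta,t)$ factors as $L^2(\Om)\xrightarrow{U(t)}\dom(L^t_{\cG})\xrightarrow{L^t_{\cG}-\zeta}L^2(\Om)$ with the first arrow a bijection, so $\Phi(\zeta,t)$ is a bijection of $L^2(\Om)$ exactly when $L^t_{\cG}-\zeta$ is a bijection onto $L^2(\Om)$; boundedness of the inverse is then automatic by the closed graph theorem. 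At $t=t_0$ the hypothesis $\zeta\notin\Sp(L^{t_0}_{\cG})$ gives invertibility of $\Phi(\zeta,t_0)$, and — this is the one point that needs a small computation — $\Phi(\zeta,t_0)^{-1}=(L^{t_0}_{\cG}+\Lambda I_{L^2(\Om)})R(\zeta,t_0)=I_{L^2(\Om)}+(\zeta+\Lambda)R(\zeta,t_0)$, which is genuinely bounded (the unboundedness of $U(t_0)^{-1}=L^{t_0}_{\cG}+\Lambda I_{L^2(\Om)}$ is cancelled by the resolvent), and is continuous, indeed analytic, in $\zeta$ on $\bbC\setminus\Sp(L^{t_0}_{\cG})$.

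Finally I run the Neumann-series perturbation uniformly over a compact set. Fix a compact $K\subset\bbC\setminus\Sp(L^{t_0}_{\cG})$; by the previous paragraph $M:=\sup_{\zeta\in K}\|\Phi(\zeta,t_0)^{-1}\|<\infty$. By the uniform-in-$\zeta$ continuity of $t\mapsto\Phi(\zeta,t)$, there is $\delta>0$ such that $\|\Phi(\zeta,t)-\Phi(\zeta,t_0)\|<1/(2M)$ for all $\zeta\in K$ and $|t-t_0|<\delta$; then $\Phi(\zeta,t)=\Phi(\zeta,t_0)\big(I_{L^2(\Om)}+\Phi(\zeta,t_0)^{-1}(\Phi(\zeta,t)-\Phi(\zeta,t_0))\big)$ is invertible with $\|\Phi(\zeta,t)^{-1}\|\le 2M$ and $\Phi(\zeta,t)^{-1}\to\Phi(\zeta,t_0)^{-1}$ as $t\to t_0$, uniformly for $\zeta\in K$. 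By the reduction, $\zeta\notin\Sp(L^t_{\cG})$ for all such $t,\zeta$ and $R(\zeta,t)=U(t)\Phi(\zeta,t)^{-1}$; hence
\[
\|R(\zeta,t)-R(\zeta,t_0)\|\le \|U(t)-U(t_0)\|\cdot 2M+\|U(t_0)\|\cdot\|\Phi(\zeta,t)^{-1}-\Phi(\zeta,t_0)^{-1}\|\longrightarrow 0
\]
as $t\to t_0$, uniformly for $\zeta\in K$. Taking $K=\{\zeta\}$ yields the first assertion. The only non-routine step is the bounded-inverse identity for $\Phi(\zeta,t_0)^{-1}$ in the second paragraph; everything else is bookkeeping plus a compactness argument, so I do not expect a genuine obstacle.
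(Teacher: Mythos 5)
Your proposal is correct and follows essentially the same route as the paper: both factor $(L^t_{\cG}-\zeta)U(t)=W(t)-\zeta U(t)$ using the continuous families from Lemma \ref{4.5}, invert this bounded family by perturbation from its invertibility at $t_0$, and recover $R(\zeta,t)=U(t)\big(W(t)-\zeta U(t)\big)^{-1}$. You merely spell out details the paper leaves implicit (the closed-graph/bijection equivalence, the identity $\Phi(\zeta,t_0)^{-1}=I+(\zeta+\Lambda)R(\zeta,t_0)$, and the uniform Neumann-series estimate over a compact set), all of which are fine.
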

\begin{proof}
Let $\zeta\in\bbC\setminus\Sp(L_{\cG}^{t_0})$. It follows from \eqref{UV} that
\begin{equation}\lb{eq1.26}
(L_{\cG}^t-\zeta)U(t)=W(t)-\zeta U(t)\, \text{ for $t$ near $t_0$}.
\end{equation}
The operator  $W(t_0)-\zeta U(t_0)=(L_{\cG}^{t_0}-\zeta)(L_{\cG}^{t_0}+\Lambda I_{L^2(\Om)})^{-1}$ is a bijection of $L^2(\Omega)$ onto $L^2(\Omega)$. Therefore, $W(t)-\zeta U(t)$
is boundedly  invertible for $t$ near $t_0$ since the function $t \mapsto W(t)-\zeta U(t)$ is continuous in $\cB(L^2(\Om))$, uniformly for $\zeta$ in compact subsets of $\bbC$. This implies that $\zeta\in\bbC\setminus\Sp(L_{\cG}^t)$ for $t$ near $t_0$, since using \eqref{eq1.26} it is easy to check that 
\begin{equation}
(L_{\cG}^t-\zeta)^{-1}=U(t)(W(t)-\zeta U(t))^{-1}\, \text{ for $t$ near $t_0$}.
\end{equation}
Hence, the function $t \mapsto R(\zeta,t)$ is continuous for $t$ near $t_0$ in the operator norm, uniformly in $\zeta$.
\end{proof}
\begin{remark}\label{rem4.6}  
For $\zeta\in\bbC\setminus\Sp (L_{\cG}^{t_0})$ one has
$[\gaD   R(\overline{\zeta}, t_0)]^*\in\cB(H^{-1/2}(\partial\Omega),H^{1}(\Omega))$ (see e.g., \cite[Equation  (4.7)]{GM08})  which, in particular, yields 
\begin{equation}\label{adj}
\langle [\gaD   R(\overline{\zeta}, t_0)]^*g,v)\rangle_{L^2(\Om)}=\langle g,\gaD   R(\overline{\zeta}, t_0)v\rangle_{1/2}
\end{equation}
for any $g\in H^{-1/2}(\partial\Omega)$ and $v\in L^2(\Omega)$. Moreover, the resolvent $R({\zeta}, t_0)$, $\zeta\in\bbC\setminus\Sp (L_{\cG}^{t_0})$, originally defined as a bounded operator on $L^2(\Omega)$ can be extended to a mapping in $\cB((H^{1}(\Omega))^*,H^{1}(\Omega))$ (see e.g., \cite[Theorem  4.5]{GM08}). The same assertion holds for a Riesz projection $P$ of the operator
$L_{\cG}^{t_0}$. In particular, one can rewrite the operator $[\gaD   R(\overline{\zeta}, t_0)]^*$ as
\begin{equation}\lb{comp}
[\gaD   R(\overline{\zeta}, t_0)]^*=R({\zeta}, t_0)\gaD^*\,\,\hbox{for}\,\, \zeta\in\bbC\setminus\Sp (L_{\cG}^{t_0}),
\end{equation}
where $\gaD^*\in\cB(H^{-1/2}(\partial\Omega),(H^{1}(\Omega))^*)$ and $R({\zeta}, t_0)\in\cB((H^{1}(\Omega))^*,H^{1}(\Omega))$ (see e.g., \cite[Corollary  4.7]{GM08}). Below, we do not distinguish between $R({\zeta}, t_0)$, respectively, $P$, as the bounded operator on $L^2(\Omega)$ and the extension of $R({\zeta}, t_0)$, respectively, $P$, to a mapping in $\cB((H^{1}(\Omega))^*,H^{1}(\Omega))$. \hfill$\Diamond$
\end{remark}

Our next lemma gives an asymptotic result for the difference of the resolvents of the operators $L_{\cG}^t$ and $L_{\cG}^{t_0}$ as $t\to t_0$. We recall the following notation:
\begin{equation}\label{T_D}
\Theta_D=\gaD^*\Theta\gaD\in\cB(H^{1}(\Omega),(H^{1}(\Omega))^*).
\end{equation}
\begin{lemma}\lb{lemma5.5}
If $\zeta\in\bbC\setminus\Sp(L_{\cG}^{t_0})$, then 
\begin{equation}\label{resolvent}
\begin{split}
R(\zeta, t)&-R(\zeta, t_0)\\&
=(t-t_0)R_1(\zeta, t_0)R(\zeta, t_0)+(t-t_0)^2\big(R_1^2(\zeta, t_0)R(\zeta, t_0)+R_2(\zeta, t_0)R(\zeta, t_0)\big)+r(t-t_0),
\end{split}
\end{equation}
where we introduce the notation
\begin{equation}\lb{4.19}
\begin{split}
R_1(\zeta, t_0)=-R(\zeta, t_0)\dot{ V}^t\big|_{t=t_0}+R({\zeta}, t_0)\Theta_D,\,
R_2(\zeta, t_0)= -\frac{1}{2}R(\zeta, t_0)\ddot{V}^t\big|_{t=t_0},
\end{split}
\end{equation} and $\|r(t)\|_{\cB(L^2(\Om))}=\mathrm{o}(t-t_0)^{2}$ as $t\to t_0$, uniformly for $\zeta$ in compact subsets of $\bbC\setminus\Sp(L_{\cG}^{t_0})$.
\end{lemma}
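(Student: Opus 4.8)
The plan is to obtain \eqref{resolvent} from the second resolvent identity
\[
R(\zeta,t)-R(\zeta,t_0)=R(\zeta,t)\big(L_{\cG}^{t_0}-L_{\cG}^{t}\big)R(\zeta,t_0),
\]
understood in the extended sense of Remark \ref{rem4.6}, i.e. with $R(\zeta,t_0)\in\cB((H^1(\Om))^*,H^1(\Om))$. The key point is that the difference $L_{\cG}^{t_0}-L_{\cG}^{t}$, when read off from the form $\mathfrak{l}_\cG^t$ in \eqref{dfnlGt}, equals $\big(t_0^2V(t_0\cdot)-t^2V(t\cdot)\big)-(t_0-t)\Theta_D=-\big(V^t-V^{t_0}\big)+(t-t_0)\Theta_D$ as a map $H^1(\Om)\to(H^1(\Om))^*$. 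Using Hypothesis \ref{h1}(iv') (so that $t\mapsto V^t$ is $C^1$, and in fact $C^2$ as a map into $C^0(\ol\Om)$ after one more application of the chain rule to $t^2V(tx)$), Taylor expansion in $t$ gives $V^t-V^{t_0}=(t-t_0)\dot V^t\big|_{t=t_0}+\tfrac12(t-t_0)^2\ddot V^t\big|_{t=t_0}+\mathrm{o}(t-t_0)^2$ in $C^0(\ol\Om;\bbR^{N\times N})$, hence a fortiori in $\cB(H^1(\Om),(H^1(\Om))^*)$. Substituting, we find
\[
L_{\cG}^{t_0}-L_{\cG}^{t}=(t-t_0)\big(-\dot V^t\big|_{t=t_0}+\Theta_D\big)-\tfrac12(t-t_0)^2\ddot V^t\big|_{t=t_0}+\mathrm{o}(t-t_0)^2,
\]
i.e. precisely $R(\zeta,t_0)^{-1}$ applied on the left to $(t-t_0)R_1(\zeta,t_0)+(t-t_0)^2 R_2(\zeta,t_0)$ (cf.\ \eqref{4.19}), modulo the $\mathrm{o}$-term.

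The second step is to replace the $R(\zeta,t)$ that sits on the far left of the resolvent identity by $R(\zeta,t_0)$ plus a controlled correction. By Lemma \ref{rescont}, $t\mapsto R(\zeta,t)$ is continuous at $t_0$ in $\cB(L^2(\Om))$, uniformly for $\zeta$ in compact subsets of $\bbC\setminus\Sp(L_{\cG}^{t_0})$; combined with the boundedness of $\gaD^*$ and of the extensions in Remark \ref{rem4.6}, the same continuity holds in $\cB((H^1(\Om))^*,H^1(\Om))$. Iterating the resolvent identity once more (a Neumann-series type expansion) yields
\[
R(\zeta,t)=R(\zeta,t_0)+R(\zeta,t_0)\big(L_{\cG}^{t_0}-L_{\cG}^{t}\big)R(\zeta,t_0)+R(\zeta,t_0)\big(L_{\cG}^{t_0}-L_{\cG}^{t}\big)R(\zeta,t)\big(L_{\cG}^{t_0}-L_{\cG}^{t}\big)R(\zeta,t_0).
\]
Feeding this back into $R(\zeta,t)-R(\zeta,t_0)=R(\zeta,t)(L_{\cG}^{t_0}-L_{\cG}^t)R(\zeta,t_0)$ and using that $L_{\cG}^{t_0}-L_{\cG}^t=(t-t_0)\big(-\dot V^t\big|_{t=t_0}+\Theta_D\big)+\mathrm{O}(t-t_0)^2=(t-t_0)R(\zeta,t_0)^{-1}R_1(\zeta,t_0)+\mathrm{O}(t-t_0)^2$ in the relevant operator norm, the quadratic-in-$(t-t_0)$ contributions combine to $(t-t_0)^2\big(R_1^2(\zeta,t_0)R(\zeta,t_0)+R_2(\zeta,t_0)R(\zeta,t_0)\big)$, while everything else is collected into $r(t-t_0)$. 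Tracking orders carefully, $r$ consists of: the cubic and higher terms of the Neumann iteration; cross terms between the $(t-t_0)$-part and the $\mathrm{O}(t-t_0)^2$-part of $L_{\cG}^{t_0}-L_{\cG}^t$; and the error from replacing $R(\zeta,t)$ by $R(\zeta,t_0)$ in the quadratic term, which is $\mathrm{o}(1)\cdot\mathrm{O}(t-t_0)^2$ by Lemma \ref{rescont}. All of these are $\mathrm{o}(t-t_0)^2$ in $\cB(L^2(\Om))$, uniformly for $\zeta$ in compact subsets of $\bbC\setminus\Sp(L_{\cG}^{t_0})$.

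The main obstacle I anticipate is purely bookkeeping of function spaces: the factors $\dot V^t\big|_{t=t_0}$ and $\ddot V^t\big|_{t=t_0}$ are only bounded as multiplication operators $H^1(\Om)\to L^2(\Om)\hookrightarrow(H^1(\Om))^*$ (not on $L^2$ alone), whereas $\Theta_D=\gaD^*\Theta\gaD$ genuinely maps $H^1(\Om)\to(H^1(\Om))^*$, so in every product $R_1(\zeta,t_0)R_1(\zeta,t_0)$, $R_1(\zeta,t_0)R_2(\zeta,t_0)$, etc.\ one must check that the extended resolvent $R(\zeta,t_0)\in\cB((H^1(\Om))^*,H^1(\Om))$ lands back in $H^1(\Om)$ so the next factor can act — this is exactly what Remark \ref{rem4.6} (via \cite{GM08}) guarantees, and it is why the products in \eqref{resolvent} and \eqref{4.19} are well defined as bounded operators on $L^2(\Om)$ after composing with the final $R(\zeta,t_0)$. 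Once the mapping properties are in place, the estimate $\|r(t-t_0)\|_{\cB(L^2(\Om))}=\mathrm{o}(t-t_0)^2$ follows from the uniform continuity in Lemma \ref{rescont} together with the $C^2$ Taylor remainder for $t\mapsto V^t$.
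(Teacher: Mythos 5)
Your proposal is correct and essentially reproduces the paper's own argument: the paper establishes your ``extended second resolvent identity'' by a direct computation with the sesquilinear form $\mathfrak{l}^t_\cG$ (leading to \eqref{res} and \eqref{formula}, with the adjoint relation \eqref{adj} producing the $\Theta_D=\gaD^*\Theta\gaD$ term), then inserts the Taylor expansion \eqref{Taylor}, iterates the resulting identity \eqref{Ru} once, and controls the remainder using Lemma \ref{rescont} together with \eqref{unif1}, exactly as you outline. The one inaccuracy is your parenthetical claim that Hypothesis \ref{h1}(iv') makes $t\mapsto V^t$ a $C^2$ map into $C^0(\overline{\Omega})$ ``after one more application of the chain rule'': differentiating $t^2V(tx)$ twice in $t$ requires two derivatives of $V$, so the existence of $\ddot V^t\big|_{t=t_0}$ is really an implicit assumption already built into the statement of the lemma (and into the paper's own proof), not a consequence of $V\in C^1$.
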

\begin{proof}
We recall that 
  $\zeta\in\bbC\setminus\Sp(L_{\cG}^t)$ for $t$ near $t_0$ by Lemma \ref{rescont}, since
  $\zeta\in\bbC\setminus\Sp(L_{\cG}^{t_0})$, and define $w=R(\zeta, t)u-R(\zeta, t_0)u$ for $\|u\|_{L^2(\Om)}\le1$. Using \eqref{OPFORM}, the definition of $w$ and the definition of the form $\mathfrak{l}_{\cG}^t(u,v)$, we obtain
\begin{align}\label{res}
\begin{split}
\langle w,v\rangle_{L^2(\Om)}&=({\mathfrak{l}}_{\cG}^{t_0}-\zeta)(R(\zeta, t_0)w,v)=({\mathfrak{l}}_{\cG}^{t_0}-\zeta)(w,R(\overline{\zeta}, t_0)v)\\
&=({\mathfrak{l}}_{\cG}^{t_0}-\zeta)(R(\zeta, t)u,R(\overline{\zeta}, t_0)v)-({\mathfrak{l}}_{\cG}^{t_0}-\zeta)(R(\zeta, t_0)u,R(\overline{\zeta}, t_0)v)\\
&=({\mathfrak{l}}_{\cG}^{t}-\zeta)(R(\zeta, t)u,R(\overline{\zeta}, t_0)v)-\langle (V^t(x)-V^{t_0}(x))R(\zeta, t)u,R(\overline{\zeta}, t_0)v\rangle_{L^2(\Om)}\\&
 \quad+(t-t_0) \langle\Theta\gaD  R(\zeta, t)u,\gaD   R(\overline{\zeta}, t_0)v\rangle_{1/2}-({\mathfrak{l}}_{\cG}^{t_0}-\zeta)(R(\zeta, t_0)u,R(\overline{\zeta}, t_0)v)\\
&=\langle u,R(\overline{\zeta}, t_0)v\rangle_{L^2(\Om)}-\langle (V^t(x)-V^{t_0}(x))R(\zeta, t)u,R(\overline{\zeta}, t_0)v\rangle_{L^2(\Om)}\\&
 \quad+(t-t_0) \langle\Theta\gaD  R(\zeta, t)u,\gaD   R(\overline{\zeta}, t_0)v\rangle_{1/2}-\langle u,R(\overline{\zeta}, t_0)v\rangle_{L^2(\Om)}\\
 &=-\langle (V^t(x)-V^{t_0}(x))R(\zeta, t)u,R(\overline{\zeta}, t_0)v\rangle_{L^2(\Om)}\\&
  \hskip3cm+(t-t_0) \langle\Theta\gaD  R(\zeta, t)u,\gaD   R(\overline{\zeta}, t_0)v\rangle_{1/2}.
\end{split}
\end{align}
Using \eqref{res} and \eqref{adj} we arrive at the formula
\begin{align}\lb{4.22}
\begin{split}
&\langle w,v\rangle_{L^2(\Om)}=-\langle R(\zeta, t_0)(V^t(x)-V^{t_0}(x))R(\zeta, t)u,v\rangle_{L^2(\Om)}\\&
  \hskip3cm+(t-t_0) \langle [\gaD   R(\overline{\zeta}, t_0)]^*\Theta\gaD  R(\zeta, t)u,v\rangle_{L^2(\Om)}.
\end{split}
\end{align}
Using \eqref{4.22} and \eqref{T_D}, we infer
\begin{align}\label{formula}
\begin{split}
&R(\zeta, t)=R(\zeta, t_0)-R(\zeta, t_0)(V^t(x)-V^{t_0}(x))R(\zeta, t)
 +(t-t_0)R({\zeta}, t_0)\Theta_D  R(\zeta, t).
\end{split}
\end{align}
Next, we decompose $V^t(x)$ as $t\to t_0$ up to quadratic terms:
\begin{equation}\lb{Taylor}
V^t(x)=V^{t_0}(x)+\dot{V}^t\big|_{t=t_0}(t-t_0)+\frac{1}{2}\ddot{V}^t\big|_{t=t_0}(t-t_0)^2+\mathrm{o}(t-t_0)^2.
\end{equation}
Replacing $V^t(x)$ in the right-hand  side of \eqref{formula} by \eqref{Taylor} yields
\begin{align}\lb{Ru}
\begin{split}
R(\zeta, t)&=R(\zeta, t_0)-(t-t_0)R(\zeta, t_0)\dot{ V}^t\big|_{t=t_0}R(\zeta, t)\\&+(t-t_0)R({\zeta}, t_0)\Theta_D  R(\zeta, t)-(t-t_0)^2R(\zeta, t_0)\frac{1}{2}\ddot{V}^t\big|_{t=t_0}R(\zeta, t)-R(\zeta, t_0)\mathrm{o}(t-t_0)^2R(\zeta, t),\\
&=R(\zeta, t_0)+(t-t_0)R_1(\zeta, t_0)R(\zeta, t)+(t-t_0)^2R_2(\zeta, t_0)R(\zeta, t)-R(\zeta, t_0)\mathrm{o}(t-t_0)^2R(\zeta, t),\\
\end{split}
\end{align}
where $R_1$ and $R_2$ are given in \eqref{4.19}.
Replacing $ R(\zeta, t)$ in the right-hand  side of \eqref{Ru} by \eqref{Ru} again yields
\begin{align*}
\begin{split}
R(\zeta, t)&=R(\zeta, t_0)+(t-t_0)R_1(\zeta, t_0)\Big(R(\zeta, t_0)+(t-t_0)R_1(\zeta, t_0)R(\zeta, t)+(t-t_0)^2R_2(\zeta, t_0)R(\zeta, t)\\
&\quad-R(\zeta, t_0)\mathrm{o}(t-t_0)^2R(\zeta, t)\Big)+(t-t_0)^2R_2(\zeta, t_0)\Big(R(\zeta, t_0)+(t-t_0)R_1(\zeta, t_0)R(\zeta, t)\\&\quad+(t-t_0)^2R_2(\zeta, t_0)R(\zeta, t)-R(\zeta, t_0)\mathrm{o}(t-t_0)^2R(\zeta, t)\Big)-R(\zeta, t_0)\mathrm{o}(t-t_0)^2R(\zeta, t)\\
&=R(\zeta, t_0)+(t-t_0)R_1(\zeta, t_0)R(\zeta, t_0)+(t-t_0)^2(R_1^2(\zeta, t_0)R(\zeta, t_0)+R_2(\zeta, t_0)R(\zeta, t_0))+r(t-t_0).
\end{split}
\end{align*}
Finally, we remark that $\|R(\zeta, t)-R(\zeta, t_0)\|_{\cB(L^2(\Om))}\to0$ and $\|R(\zeta,t)\|_{\cB(L^2(\Om))}$ is bounded as $t\to t_0$ by Lemma \ref{rescont} and thus, using \eqref{unif1}, we conclude that $\|r(t)\|_{\cB(L^2(\Om))}=\mathrm{o}(t-t_0)^{2}$ as $t\to t_0$, uniformly for $\zeta$ in compact subsets of $\bbC\setminus\Sp(L^{t_0}_{\cG})$.
\end{proof}

Our next objective is to study the asymptotics of the eigenvalues of $L_{\cG}^t$ that bifurcate from the eigenvalue $\lambda^{t_0}$ of $L_{\cG}^{t_0}$ for $t$ near $t_0$.
We let $P$ denote the orthogonal Riesz projection for $L_{\cG}^{t_0}$ corresponding to the eigenvalue $\lambda^{t_0}\in\Sp(L_{\cG}^{t_0})$ and let $\ran({P})=\ker(L_{\cG}^{t_0}-\lambda^{t_0}I_{L^2(\Om)})$ be the $m$-dimensional subspace in $L^2(\Om; \bbC^N)$ spanned by the normalized vector valued functions $\{u^{t_0}_j\}_{j=1}^m$.
  Also, we
let $P(t)$ denote the Riesz spectral protection for $L_{\cG}^t$ corresponding to the eigenvalues $\{\lambda_j^t\}_{j=1}^m\subset\Sp(L_{\cG}^t)$ bifurcating from $\lambda^{t_0}$, that is, we let
\beq\lb{dfnRPr}
P=\frac{1}{2\pi i}\int_\gamma(\zeta-L_{\cG}^{t_0})^{-1}\,d\zeta,\,
P(t)=\frac{1}{2\pi i}\int_\gamma(\zeta-L_{\cG}^t)^{-1}\,d\zeta.
\enq
Here, we first choose $\gamma$ so small that $\lambda^{t_0}$ is the only element in $\Sp(L^{t_0}_{\cG})$ surrounded by $\gamma$. Next, we choose $t$ close to $t_0$ so that the total multiplicity of all eigenvalues of $L^{t}_{\cG}$ enclosed by $\gamma$ is equal to the multiplicity of $\lambda^{t_0}$. This is indeed possible since $L^{t}_{\cG}$ is a continuous family by Lemma \ref{4.5}. 
Our objective is to establish an asymptotic formula for the eigenvalues $\lambda_j^t$ as $t\to t_0$ similar to \cite[Theorem II.5.11]{Kato}, which is valid for families of bounded operators on finite dimensional spaces. We stress that one can not directly use a related result \cite[Theorem VIII.2.9]{Kato} for families of unbounded operators, as the $t$-dependence of $L_{\cG}^t$ in our case  is more complicated than allowed in the latter theorem. We are thus forced to mimic the main strategy of \cite{Kato} in order to extend the relevant results to the family of the operators $L_{\cG}^t$.

We recall that $(L_{\cG}^{t_0}-\lambda^{t_0}I_{L^2(\Om)})P=0$ and thus the standard formula for the resolvent decomposition, see, e.g., \cite[Section III.6.5]{Kato}, yields
\begin{equation}\lb{decomp}
	R(\zeta,t_0)=(\lambda^{t_0}-\zeta)^{-1}P+\sum_{n=0}^\infty(\zeta-\lambda^{t_0})^nS^{n+1},
\end{equation}	
where
\begin{equation}
	S=\frac{1}{2\pi i}\int_\gamma(\zeta-\lambda^{t_0})^{-1}R(\zeta,t_0)\,d\zeta \lb{decomp1}
\end{equation}
is the reduced resolvent for the operator $L_{\cG}^{t_0}$ in $L^2(\Om)$; in particular, $PS=SP=0$.

We introduce the notation $D(t)=P(t)-P$. Applying $-\frac{1}{2\pi i}\int_\gamma(\cdot)\,d\zeta$ in \eqref{resolvent} and using \eqref{dfnRPr} we conclude that $D(t)=
\mathrm{o}(t-t_0)$ as $t\to t_0$. Therefore $I_{L^2(\Om)}-D^2(t)$ is strictly positive for $t$ near $t_0$ and, following \cite[Section I.4.6]{Kato}, we may introduce mutually inverse operators $U(t)$ and $U(t)^{-1}$ in $\cB(L^2(\Om))$ as follows:
\begin{equation}\label{dfnUUinv}
\begin{split}
U(t)&=(I-D^2(t))^{-1/2}\big((I-P(t))(I-P)+P(t)P\big),\\
U(t)^{-1}&=(I-D^2(t))^{-1/2}\big((I-P)(I-P(t))+PP(t)\big).
\end{split}
\end{equation}
The transformation operator $U(t)$ splits the projections $P$ and $P(t)$, that is, 
\begin{equation}\label{up}
U(t)P=P(t)U(t),
\end{equation}
so that $U(t)$ is an isomorphism of the $m$-dimensional subspace $\ran({P})$ onto the subspace $\ran({P(t)})$. Using Lemma \ref{lemma5.5} and recalling notations \eqref{T_D}, \eqref{decomp1} we obtain the following result.
\begin{lemma}\lb{lem:simile} Let $P$ be the Riesz projection for $L_{\cG}^{t_0}$ onto the subspace $\ran({P})=\ker(L_{\cG}^{t_0}-\lambda^{t_0}I_{L^2(\Om)})$, let $P(t)$ be the respective Riesz projection for $L_{\cG}^t$ from \eqref{dfnRPr}, and  let the transformation operators $U(t)$ and $U(t)^{-1}$ be defined in \eqref{dfnUUinv}. Then, for $t$ near $t_0$, 
the operator $L^t_\cG\big|_{\ran({P(t)})}$ is similar to the finite dimensional operator  $T(t)=PU(t)^{-1}L_{\cG}^tP(t)U(t)P$ acting in the subspace $\ran({P})$ and satisfying the following asymptotic formula:
\begin{align}\label{pulup}
\begin{split}
PU(t)^{-1}L_{\cG}^tP(t)U(t)P
&=\lambda^{t_0}P+(t-t_0)P\big(\dot{V}^t\big|_{t=t_0}-\Theta_D  \big)P\\&+(t-t_0)^2P\big(\frac{1}{2}\ddot{V}^t\big|_{t=t_0}-\dot{V}^t\big|_{t=t_0}S\dot{V}^t\big|_{t=t_0}- \Theta_D   S\Theta_D  \\
&\qquad+\dot{V}^t\big|_{t=t_0} S\Theta_D  + \Theta_D S\dot{V}^t\big|_{t=t_0}\big)P+\mathrm{o}(t-t_0)^2
\text{ as $t\to t_0$}.
\end{split}
\end{align}
\end{lemma}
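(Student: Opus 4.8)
\noindent\textit{Proof plan.} The plan is to imitate the analytic perturbation scheme of \cite[Sections I.4 and II.5]{Kato}, with the non-analytic resolvent expansion of Lemma~\ref{lemma5.5} playing the role of a convergent Neumann series. Abbreviate $A=-\dot V^t\big|_{t=t_0}+\Theta_D$ and $C=\ddot V^t\big|_{t=t_0}$; throughout, by Hypothesis~\ref{h1}(iv') and Remark~\ref{rem4.6}, all products below of $R(\zeta,t_0)$, $S$, $P$, $A$, $C$ are read as compositions along $H^1(\Om)\hookrightarrow L^2(\Om)\hookrightarrow (H^1(\Om))^*$, so that mixed products such as $R(\zeta,t_0)\Theta_D R(\zeta,t_0)$, $S\Theta_D S$, $S\Theta_D S\Theta_D S$ are bounded on $L^2(\Om)$. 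First I would record the two structural facts that power the computation: since $L^t_\cG R(\zeta,t)=I_{L^2(\Om)}+\zeta R(\zeta,t)$ and $\int_\gamma d\zeta=0$, formula~\eqref{dfnRPr} gives $L^t_\cG P(t)=-\tfrac{1}{2\pi i}\int_\gamma \zeta R(\zeta,t)\,d\zeta$; and, since $\ran(P(t))$ is $L^t_\cG$-invariant and $U(t)P=P(t)U(t)$ by~\eqref{up}, the map $U(t)$ restricts to an isomorphism $\ran(P)\to\ran(P(t))$ conjugating $L^t_\cG\big|_{\ran(P(t))}$ onto $T(t)=PU(t)^{-1}(L^t_\cG P(t))U(t)P$ (cf.~\cite[Sections I.4.6, I.5.3]{Kato}), which is the asserted similarity. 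It then remains to expand $L^t_\cG P(t)$, $U(t)$ and $U(t)^{-1}$ to second order in $(t-t_0)$ and to multiply out.

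To expand $L^t_\cG P(t)$, I would insert~\eqref{resolvent} into the contour integral and integrate term by term --- the remainder $r(t-t_0)$ contributes $\mathrm{o}(t-t_0)^2$ in $\cB(L^2(\Om))$ uniformly on the fixed contour $\gamma$ by Lemma~\ref{lemma5.5} --- and evaluate each of the three leading integrals by substituting the Laurent series~\eqref{decomp} of $R(\zeta,t_0)$, writing $\zeta=\lambda^{t_0}+(\zeta-\lambda^{t_0})$, and keeping only the residue at $\lambda^{t_0}$. Using $R_1(\zeta,t_0)=R(\zeta,t_0)A$, $R_2(\zeta,t_0)=-\tfrac12 R(\zeta,t_0)C$, the identities $PR(\zeta,t_0)=R(\zeta,t_0)P=(\lambda^{t_0}-\zeta)^{-1}P$, and $PS=SP=0$, this gives $L^t_\cG P(t)=\lambda^{t_0}P+(t-t_0)B_1+(t-t_0)^2 B_2+\mathrm{o}(t-t_0)^2$ with $B_1=-PAP+\lambda^{t_0}(PAS+SAP)$ and (only the compression will matter) $PB_2P=-\lambda^{t_0}PAS^2AP-PASAP+\tfrac12 PCP$. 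Applying $-\tfrac1{2\pi i}\int_\gamma(\cdot)\,d\zeta$ to~\eqref{resolvent} likewise yields $D(t):=P(t)-P=(t-t_0)D_1+\mathrm{O}((t-t_0)^2)$ with $D_1=PAS+SAP$; then expanding~\eqref{dfnUUinv} through $P(t)=P+D(t)$ and $(I-D^2)^{-1/2}=I+\tfrac12 D^2+\cdots$ gives $U(t)=I+(t-t_0)U_1+\mathrm{O}((t-t_0)^2)$ with $U_1=D_1(2P-I)$, whence $PU_1P=0$, $PU_1=-PAS$, $U_1P=SAP$, and $U(t)^{-1}=I-(t-t_0)U_1+(t-t_0)^2(U_1^2-U_2)+\mathrm{o}(t-t_0)^2$.

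Finally I would collect the coefficients of $(t-t_0)^0,(t-t_0)^1,(t-t_0)^2$ in $PU(t)^{-1}(L^t_\cG P(t))U(t)P$. Order $0$ gives $\lambda^{t_0}P$. At order $1$, since $PB_0=B_0P=\lambda^{t_0}P$ the two corrections $-PU_1B_0P$ and $PB_0U_1P$ cancel, leaving $PB_1P=-PAP=P(\dot V^t|_{t=t_0}-\Theta_D)P$, the linear term of~\eqref{pulup}. At order $2$ the $U_2$-corrections enter only through $PB_0U_2P-PU_2B_0P=0$; $PU_1B_0U_1P=0$ because $PU_1P=0$; and the three surviving $U_1$-corrections combine, via $PU_1=-PAS$, $U_1P=SAP$ and $PS=SP=0$, into $PB_1U_1P-PU_1B_1P+PU_1^2B_0P=\lambda^{t_0}PAS^2AP$, which cancels the matching part of $PB_2P$; what remains is $-PASAP+\tfrac12 PCP$, and substituting $A=-\dot V^t|_{t=t_0}+\Theta_D$, $C=\ddot V^t|_{t=t_0}$ and expanding $-PASAP$ reproduces verbatim the quadratic bracket in~\eqref{pulup}. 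I expect the main obstacle to be exactly this order-$2$ bookkeeping: tracking how the contour-integral term $PB_2P$ recombines with the transformation-operator corrections carried by $U_1$, where every cancellation hinges on $PS=SP=0$ and on the explicit form $U_1=D_1(2P-I)$ (so that $U_2$ never needs to be computed); the only preliminary subtlety, already handled in Remark~\ref{rem4.6}, is to interpret the mixed products above as bounded operators on $L^2(\Om)$ via the extension of $R(\zeta,t_0)$, $P$, $S$ to $\cB((H^1(\Om))^*,H^1(\Om))$.
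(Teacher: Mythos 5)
Your proposal is correct and is essentially the paper's own argument: both rest on the resolvent expansion of Lemma \ref{lemma5.5}, the Laurent series \eqref{decomp} together with $PS=SP=0$, the transformation operators \eqref{dfnUUinv} with the splitting relation \eqref{up}, and the identity $L^t_\cG P(t)=-\frac{1}{2\pi i}\int_\gamma\zeta R(\zeta,t)\,d\zeta$, with the boundary term handled exactly as in Remark \ref{rem4.6}. The only difference is the order of the bookkeeping: you apply the contour integral first to obtain the coefficient operators $B_1$, $B_2$, $D_1$, $U_1$ and observe that the unknown second-order coefficient of $U(t)$ cancels, whereas the paper expands $PU(t)^{-1}R(\zeta,t)U(t)P=A_1+A_2+A_3+A_4$ (computing $PU(t)P$ and $PU(t)^{-1}P$ to second order) and integrates $-\frac{1}{2\pi i}\int_\gamma\zeta(\cdot)\,d\zeta$ at the end; both routes produce \eqref{pulup}.
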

The proof of the lemma is a straightforward but lengthy calculation and is given in the next subsection. 

We will now proceed with the proof of the main results of the paper for the first derivative of the eigenvalues of the operators $L^{(\Om_t)}_\cG$ and $L^t_\cG$. By Lemma \ref{lem:simile} the spectrum of the operator $L^t_\cG$ is the same as the spectrum of the operator $T(t)$. We will use an asymptotic formula from \cite{Kato} for the eigenvalues of the operator $T(t)=
PU(t)^{-1}L_{\cG}^tP(t)U(t)P$ acting in the finite dimensional space $\ran(P)$, where
\begin{equation}\lb{T(t)}
	T(t)=\lambda^{t_0}T+(t-t_0) T^{(1)}+(t-t_0)^2T^{(2)}+\mathrm{o}(t-t_0)^2
	\text{ as $t\to t_0$},
\end{equation}
and we denote
\begin{align}\lb{4T1}
T&=I,\quad T^{(1)}=P(\dot{V}^t\big|_{t=t_0}-\Theta_D  )P,\quad\\\lb{T2} T^{(2)}&=P(\frac{1}{2}\ddot{V}^t\big|_{t=t_0}-\dot{V}^t\big|_{t=t_0}S\dot{V}^t\big|_{t=t_0}- \Theta_D   S\Theta_D  
+\dot{V}^t\big|_{t=t_0} S\Theta_D  + \Theta_D S\dot{V}^t\big|_{t=t_0})P.\end{align}
Indeed, let $\{\lambda^{(1)}_j\}_{j=1}^{m}$ denote the eigenvalues of the 
operator $T^{(1)}$ (some of them could be repeated). According to \cite[Theorem II.5.11]{Kato} the eigenvalues $\lambda_{j}^t$ of the operator $T(t)$ are given by the formula
\beq\lb{eq1.84}
\lambda_{j}^t=\lambda^{t_0}+(t-t_0)  \lambda^{(1)}_j+\mathrm{o}(t-t_0)
\,\text{ as $t\to t_0$}, \, j=1,\dots,m.
\enq
We stress again that $\{\lambda^{t}_j\}_{j=1}^{m}$, the eigenvalues of the operator $T(t)$, form the part of the spectrum $\Sp(L^t_{\cG})$ enclosed by $\gamma$ for $t$ near $t_0$.

\begin{proof}[Proof of Theorem \ref{gH}]
 By Lemma \ref{lemResc}, we have the relation $\lambda_j^t=t^2\lambda_j(t)$ for  $\lambda_j^t\in\Sp(L^{t}_{\cG})$ and $\lambda_j(t)\in\Sp(L_{\cG}^{\Om_t})$. We differentiate this relation with respect to $t$ and obtain
\begin{equation}\lb{4.54}
\dot\lambda_j(t)=\frac{1}{t^2}(\dot\lambda_j^t-2t\lambda_j(t)).
\end{equation}
 From formula \eqref{eq1.84}, we know that $\dot\lambda_j^{t}\big|_{t=t_0}=\lambda^{(1)}_j$ are the eigenvalues of the operator $T^{(1)}$ introduced in \eqref{4T1}. This proves the first equality in \eqref{1.17}. For the operator $T^{(1)}$ given in \eqref{4T1}, we now consider the associated quadratic form \[\mathfrak{t}^{(1)}(u,v)=(\dot{ V}^t\big|_{t=t_0}u,v)_{L^2(\Omega)}-\langle\Theta\gaD  u,\gaD   v\rangle_{1/2}, \, \dom(\mathfrak{t}^{(1)})=\ran(P)\times\ran({P}).\] Here, $u\in\dom (L_{\cG}^{t_0})$ since $u\in\ran(P)$. In particular, $\gaN u-t_0\Theta\gaD  u=0$. We rewrite $\mathfrak{t}^{(1)}$ as follows,
\begin{align}\lb{t1}
\begin{split}
\mathfrak{t}^{(1)}(u,v)=\langle\dot{V}^t\big|_{t=t_0}u,v\rangle_{L^2(\Omega)}-\frac{1}{t_0}\langle\gaN  u,\gaD   v\rangle_{1/2},\,\,\dom (\mathfrak{t}^{(1)})=\ran(P)\times\ran(P).
\end{split}
\end{align}
Next, for the normalized basic vectors $u_j^{t_0}\in\ran(P)$ we have $\lambda^{(1)}_j=\mathfrak{t}^{(1)}(u_j^{t_0},u_j^{t_0})$, $\, j=1,\dots,m$. Furthermore, for $q_j=(\gaD u_j^{t_0}, \frac{1}{t_0}\gaN u_j^{t_0})\in\Upsilon(\lambda(t_0), t_0)\cap\cG$  Lemma \ref{prelmon} yields
\beq\lb{eqn4.56}
\mathfrak{m}_{t_0}(q_j,q_j)=\frac1{t_0}\langle \dot{V}_{\lambda(t_0),t}\big|_{t=t_0}
 u_j^{t_0}, u_j^{t_0}\rangle_{L^2(\Om)}-\frac{1}{(t_0)^{2}}\,\langle
\gaN u_j^{t_0},\gaD u_j^{t_0}\rangle_{1/2}.
\enq
This gives formula \eqref{1.18}.
Using \eqref{t1}, relation \eqref{eq2.13} between $V^t$ and $V_{\lambda(t_0),t}$, and \eqref{eqn4.56}, we conclude that
\begin{equation}\label{eqn4.57}
\mathfrak{m}_{t_0}(q_j,q_j)=\frac1{t_0}\big(\mathfrak{t}^{(1)}(u_j^{t_0},u_j^{t_0})-\langle 2t_0\lambda(t_0)u_j^{t_0},u_j^{t_0}\rangle_{L^2(\Om)}\big)=\frac{1}{t_0}\big(\lambda_j^{(1)}-2t_0\lambda(t_0)\big).
\end{equation}
This proves the second equality in \eqref{1.17} and concludes the proof of the theorem.
\end{proof}
\begin{remark} Formula \eqref{eqn4.57} shows that if the form
$\widetilde{\mathfrak{t}}^{(1)}(u,v)=\frac{1}{t_0}\mathfrak{t}^{(1)}(u,v)-2t\lambda(t_0)\langle u,v\rangle_{L^2(\Omega)}$
on $\ran({P})$ is nondegenerate then the signature of the crossing (Maslov) form $\mathfrak{m}_{t_0}$ can be computed as follows:
\begin{align*}
\sgn(\mathfrak{m}_{t_0})&=\sgn(\widetilde{\mathfrak{t}}^{(1)})=
\card\big\{j: \lambda_j^{(1)}> 2t_0\lambda(t_0)\big\}-\card\big\{j: \lambda_j^{(1)}< 2t_0\lambda(t_0)\big\}\\
&=\card\big\{j: \dot{\lambda}_j(t)> 0\big\}-\card\big\{j: \dot{\lambda}_j(t)< 0\big\},
\end{align*}
and therefore the Maslov index of the path $t\mapsto\Upsilon(\lambda(t_0),t)$ near $t_0$ is equal to $\sgn(\widetilde{\mathfrak{t}}^{(1)})$.
\hfill$\Diamond$\end{remark}

\begin{proof}[Proof of Theorem \ref{1.3}]
The assertion immediately follows from Theorem \ref{gH} and Lemma \ref{preltmon}.
\end{proof}
\begin{proof}[Proof of Corollary \ref{1.4}]
This follows from Theorem \ref{gH} and Lemma \ref{tmon}.
\end{proof}

We will now derive formulas for the second derivatives of the eigenvalues of the operators $L^{(\Om_t)}_\cG$ and $L^t_{\cG}$. To this end, we will renumber the eigenvalues of the operator $T^{(1)}$ from \eqref{4T1}, and let $\{\lambda^{(1)}_i\}_{i=1}^{m'}=\{\lambda_j^{(1)}\}_{j=1}^m$ denote the 
$m'$ {\em distinct} eigenvalues of the operator $T^{(1)}$, so that $m'\le m$. We 
let $m_i^{(1)}$ denote the multiplicity of $\lambda^{(1)}_i$, and let $P^{(1)}_i$ 
denote the respective orthogonal Riesz spectral projections of $T^{(1)}$ corresponding to the eigenvalue $\lambda_i^{(1)}$ so that $m^{(1)}_i=\dim\ran(P^{(1)}_i)$, $\sum_{i=1}^{m'}P^{(1)}_i=P$ and  $\sum_{i=1}^{m'}m_i^{(1)}=m$.
Next, let $\lambda^{(2)}_{i k}$, $i=1,\dots, m'$, $k=1,\dots,m^{(1)}_i$,  denote the eigenvalues of the operator $P^{(1)}_i T^{(2)} P^{(1)}_i$ 
in $\ran(P^{(1)}_i)$ where the operator $T^{(2)}$ is defined in \eqref{T2}.

\begin{lemma}\label{l4.10}
The eigenvalues $\{\lambda^{t}_{ik}\}=\{\lambda_j^t\}_{j=1}^m$ of the operator $T(t)$ or $L^t_{\cG}$ have the asymptotic expansions
\beq\lb{eq1.85}
\lambda^t_{ik}=\lambda^{t_0}+(t-t_0)  \lambda^{(1)}_i+(t-t_0)^2\lambda^{(2)}_{ik}+\mathrm{o}(t-t_0)^2
\,\text{ as $t\to t_0$}, \, i=1,\dots,m',\, k=1,\dots,m^{(1)}_i.
\enq
\end{lemma}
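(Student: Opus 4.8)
The plan is to run one further step of Kato's reduction process, now for the rescaled family $\widetilde T(t):=(t-t_0)^{-1}\bigl(T(t)-\lambda^{t_0}I\bigr)$, $t\ne t_0$, where $I$ is the identity on $\ran(P)$. By Lemma \ref{lem:simile} and \eqref{T(t)}--\eqref{T2} this family satisfies $\widetilde T(t)=T^{(1)}+(t-t_0)T^{(2)}+\mathrm{o}(t-t_0)$ as $t\to t_0$, and everything below takes place on the $m$-dimensional space $\ran(P)$, so all resolvents, Riesz projections and operator square roots are automatically well defined and the $\mathrm{O}/\mathrm{o}$ estimates are with respect to the (equivalent) norms on that finite-dimensional space. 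Since the eigenvalues of $\widetilde T(t)$ are $\mu_j(t)=(\lambda^t_j-\lambda^{t_0})/(t-t_0)$, the claim \eqref{eq1.85} is equivalent to showing $\mu_{ik}(t)=\lambda^{(1)}_i+(t-t_0)\lambda^{(2)}_{ik}+\mathrm{o}(t-t_0)$ for each group index $i$ and each $k$.

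First I would isolate, for a fixed distinct eigenvalue $\lambda^{(1)}_i$ of $T^{(1)}$, the group of eigenvalues of $\widetilde T(t)$ bifurcating from it. Choose a small circle $\gamma_i$ around $\lambda^{(1)}_i$ separating it from the rest of $\Sp(T^{(1)})$ and set $Q_i(t)=\tfrac{1}{2\pi i}\int_{\gamma_i}(\zeta-\widetilde T(t))^{-1}\,d\zeta$ for $t$ near $t_0$. The resolvent identity along $\gamma_i$ together with $\|\widetilde T(t)-T^{(1)}\|=\mathrm{O}(t-t_0)$ gives $\|D_i(t)\|=\mathrm{O}(t-t_0)$, where $D_i(t):=Q_i(t)-P^{(1)}_i$. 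Mimicking \eqref{dfnUUinv} (cf.\ \cite[Section I.4.6]{Kato}) I would then introduce the transformation operator $W_i(t)=(I-D_i(t)^2)^{-1/2}\bigl((I-Q_i(t))(I-P^{(1)}_i)+Q_i(t)P^{(1)}_i\bigr)$, which satisfies $W_i(t)P^{(1)}_i=Q_i(t)W_i(t)$ and $W_i(t)^{\pm1}=I+\mathrm{O}(t-t_0)$, so that (exactly as in Lemma \ref{lem:simile}) $\widetilde T(t)\big|_{\ran(Q_i(t))}$ is similar to $\widehat T_i(t):=P^{(1)}_iW_i(t)^{-1}\widetilde T(t)W_i(t)P^{(1)}_i$ on $\ran(P^{(1)}_i)$, and the eigenvalues of $\widehat T_i(t)$ are precisely the $\mu_j(t)$ inside $\gamma_i$.

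The main point is then the identity
\[ \widehat T_i(t)=\lambda^{(1)}_iP^{(1)}_i+(t-t_0)\,P^{(1)}_iT^{(2)}P^{(1)}_i+\mathrm{o}(t-t_0). \]
Writing $W_i(t)=I+B_i(t)$ with $B_i(t)=\mathrm{O}(t-t_0)$ and expanding, $\widehat T_i(t)=P^{(1)}_i\widetilde T(t)P^{(1)}_i+P^{(1)}_i\bigl(\widetilde T(t)B_i(t)-B_i(t)\widetilde T(t)\bigr)P^{(1)}_i+\mathrm{O}((t-t_0)^2)$. For the first summand, $P^{(1)}_iT^{(1)}P^{(1)}_i=\lambda^{(1)}_iP^{(1)}_i$ gives $P^{(1)}_i\widetilde T(t)P^{(1)}_i=\lambda^{(1)}_iP^{(1)}_i+(t-t_0)P^{(1)}_iT^{(2)}P^{(1)}_i+\mathrm{o}(t-t_0)$. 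For the commutator summand, replace $\widetilde T(t)$ by $T^{(1)}+\mathrm{O}(t-t_0)$; since $T^{(1)}$ is selfadjoint on $\ran(P)$ with $P^{(1)}_i$ its eigenprojection for $\lambda^{(1)}_i$, one has $P^{(1)}_iT^{(1)}=T^{(1)}P^{(1)}_i=\lambda^{(1)}_iP^{(1)}_i$, whence $P^{(1)}_iT^{(1)}B_i(t)P^{(1)}_i$ and $P^{(1)}_iB_i(t)T^{(1)}P^{(1)}_i$ both equal $\lambda^{(1)}_iP^{(1)}_iB_i(t)P^{(1)}_i$ and cancel, leaving $\mathrm{O}((t-t_0)^2)$. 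This cancellation — which crucially uses that $P^{(1)}_i$ projects onto the \emph{entire} $\lambda^{(1)}_i$-eigenspace of $T^{(1)}$, so that $T^{(1)}$ acts there as the scalar $\lambda^{(1)}_i$ — is the step I expect to be the main obstacle; the remaining $\mathrm{O}/\mathrm{o}$ bookkeeping is routine because the ambient space is finite-dimensional.

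Finally I would apply the first-order expansion \eqref{eq1.84} (that is, \cite[Theorem II.5.11]{Kato}, whose argument uses only the value and first-order term of the family at $t_0$) to $\widehat T_i(\cdot)$ on $\ran(P^{(1)}_i)$: its eigenvalues are $\lambda^{(1)}_i+(t-t_0)\lambda^{(2)}_{ik}+\mathrm{o}(t-t_0)$, where $\{\lambda^{(2)}_{ik}\}_{k=1}^{m^{(1)}_i}$ are the eigenvalues of $P^{(1)}_iT^{(2)}P^{(1)}_i$, in accordance with the definition preceding the lemma. Multiplying by $(t-t_0)$ and adding $\lambda^{t_0}$ converts this into \eqref{eq1.85}, and letting $i$ range over $1,\dots,m'$ accounts for all $m=\sum_{i}m^{(1)}_i$ eigenvalues of $L^t_\cG$ (equivalently of $T(t)$) bifurcating from $\lambda^{t_0}$.
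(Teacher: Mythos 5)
Your argument is correct, but it takes a different route from the paper: the paper's entire proof of Lemma \ref{l4.10} is a one-line appeal to \eqref{T(t)} together with \cite[Theorem II.5.11]{Kato}, which already delivers the two-term eigenvalue asymptotics for a finite-dimensional family $T(t)=\lambda^{t_0}I+(t-t_0)T^{(1)}+(t-t_0)^2T^{(2)}+\mathrm{o}(t-t_0)^2$ (with no reduced-resolvent correction at this stage, precisely because the zeroth-order term is scalar on $\ran(P)$). What you do instead is re-derive that cited result by hand: you pass to the rescaled family $\widetilde T(t)=(t-t_0)^{-1}(T(t)-\lambda^{t_0}I)=T^{(1)}+(t-t_0)T^{(2)}+\mathrm{o}(t-t_0)$ and run one more round of the same Riesz-projection/transformation-operator reduction that the paper uses at the level of $L^t_\cG$ in Lemma \ref{lem:simile}, with the commutator cancellation $P^{(1)}_iT^{(1)}B_iP^{(1)}_i=P^{(1)}_iB_iT^{(1)}P^{(1)}_i=\lambda^{(1)}_iP^{(1)}_iB_iP^{(1)}_i$ (valid because $T^{(1)}$ is selfadjoint on $\ran(P)$ and $P^{(1)}_i$ is its full orthogonal eigenprojection, so $T^{(1)}$ acts as the scalar $\lambda^{(1)}_i$ there) doing the work that Kato's reduction process does in the book. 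Your approach buys a self-contained proof that makes transparent why the second-order coefficients are exactly the eigenvalues of $P^{(1)}_iT^{(2)}P^{(1)}_i$ with no $T^{(1)}ST^{(1)}$-type term; the paper's approach is shorter, outsourcing both the group separation and the $\mathrm{o}$-bookkeeping to Kato. The only points worth making explicit if you keep your version are the standard ones you already gesture at: that $\operatorname{rank}Q_i(t)=m^{(1)}_i$ for $t$ near $t_0$ (so the groups exhaust all $m$ eigenvalues), and that the finite-dimensional first-order statement you invoke at the end tolerates the $\mathrm{o}(t-t_0)$ remainder, which is exactly the content of \eqref{eq1.84} applied to $\widehat T_i$.
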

\begin{proof}
This follows from \eqref{T(t)} and \cite[Theorem II.5.11]{Kato}.
\end{proof}
\begin{proof}[Proof of Theorem \ref{gHaf}] Lemma \ref{l4.10} yields $\dot{\lambda}^t_{ik}\big|_{t=t_0}=\lambda_i^{(1)}$ and $\frac12\ddot{\lambda}_{ik}^t\big|_{t=t_0}=\lambda_{ik}^{(2)}$. Differentiating formula ${\lambda}_{ik}^t=t^2\lambda_{ik}(t)$ at $t=t_0$ and solving for $\dot\lambda_{ik}(t_0)$ and $\ddot\lambda_{ik}(t_0)$ yields \eqref{asform}.
\end{proof}

\subsection{The proof of Lemma \ref{lem:simile}}
 We will split the proof into several steps.

{\em Step 1.}\, We first prove the following asymptotic relations for $\zeta\in\gamma$:
\begin{align}
\label{rp}
R(\zeta, t)P&=(\lambda^{t_0}-\zeta)^{-1}P+(t-t_0) (\lambda^{t_0}-\zeta)^{-1}(-R(\zeta, t_0)\dot{ V}^t\big|_{t=t_0}+R({\zeta}, t_0)\Theta_D) P+\mathrm{o}(t-t_0)_u,\\
PR(\zeta, t)
  &=(\lambda^{t_0}-\zeta)^{-1}P+(t-t_0) (\lambda^{t_0}-\zeta)^{-1}(-P\dot{ V}^t\big|_{t=t_0}+ P\Theta_D) R(\zeta, t_0)+\mathrm{o}(t-t_0)_u,\label{eq1.63}\\
PR(\zeta, t)P&=(\lambda^{t_0}-\zeta)^{-1}P+(t-t_0) (\lambda^{t_0}-\zeta)^{-2}(-P\dot{ V}^t\big|_{t=t_0}+P\Theta_D )P\nonumber\\&\quad+(t-t_0)^2(\lambda^{t_0}-\zeta)^{-2}(\tilde{R}_2(\zeta, t_0)-\frac{1}{2}P\ddot{V}^t\big|_{t=t_0}P)+\mathrm{o}(t-t_0)^{2}_u, \label{prp}\\
(I-P)R(\zeta, t)P&=(t-t_0)  (\lambda^{t_0}-\zeta)^{-1}(I-P)(-R(\zeta, t_0)\dot{ V}^t\big|_{t=t_0}+R({\zeta}, t_0)\Theta_D) P+\mathrm{o}(t-t_0)_u,\label{7.44.2}
\end{align}
where 
\begin{align*}
\tilde{R}_2(\zeta, t_0)&=PR_1^2(\zeta, t_0)R(\zeta, t_0)P=P\dot{V}^t\big|_{t=t_0}R(\zeta, t_0)\dot{V}^t\big|_{t=t_0}P+ P\Theta_D R(\zeta, t_0)  \Theta_D  P\\
&-P\dot{V}^t\big|_{t=t_0} R(\zeta, t_0)\Theta_D  P- P\Theta_D R(\zeta, t_0)\dot{V}^t\big|_{t=t_0}P.
\end{align*}
Here and below we write $\mathrm{o}(t-t_0)^\alpha_u$ to indicate a term which is $\mathrm{o}((t-t_0)^\alpha)$ as $t\to t_0$ uniformly for $\zeta\in\gamma$.
Indeed, to prove \eqref{rp} we note that $R(\zeta,t_0)P=(\lambda^{t_0}-\zeta)^{-1}P$ by \eqref{decomp} and use \eqref{resolvent}. Similarly, \eqref{resolvent} yields \eqref{eq1.63} and
\eqref{prp}. Also, \eqref{7.44.2} follows immediately from \eqref{rp}.

{\em Step 2.}\, We claim the following asymptotic relations for the Riesz projections:
\begin{align}\label{ptp}
P(t)P&=P+(t-t_0)(-S\dot{ V}^t\big|_{t=t_0}+ S\Theta_D) P+\mathrm{o}(t-t_0),\\
\label{ppt}
PP(t)&=P+(t-t_0)(-P\dot{ V}^t\big|_{t=t_0}+ P\Theta_D) S+\mathrm{o}(t-t_0),\\
\label{ppp}
PP(t)P&=P-(t-t_0)^2(P\dot{V}^t\big|_{t=t_0}S^2\dot{V}^t\big|_{t=t_0}P+ P\Theta_D   S^2\Theta_D  P\nonumber\\
&-P\dot{V}^t\big|_{t=t_0} S^2\Theta_D  P- P\Theta_D S^2\dot{V}^t\big|_{t=t_0}P)+\mathrm{o}(t-t_0)^{2}.
\end{align}
This follows from \eqref{dfnRPr} and \eqref{decomp1} by applying integration  $-\frac{1}{2\pi i}\int_\gamma(\cdot)\,d\zeta$ in \eqref{rp}, \eqref{eq1.63} and \eqref{prp}.

{\em Step 3.}\, We next claim the following asymptotic relations for the transformation operators defined in 
\eqref{dfnUUinv}:
\begin{align}
U(t)&=I+(t-t_0) ((-S\dot{ V}^t\big|_{t=t_0}+ S\Theta_D) P-(-P\dot{ V}^t\big|_{t=t_0}+ P\Theta_D) S)+\mathrm{o}(t-t_0),\lb{eq1.72}\\
U(t)^{-1}&=I+(t-t_0) ((-P\dot{ V}^t\big|_{t=t_0}+ P\Theta_D) S-(-S\dot{ V}^t\big|_{t=t_0}+ S\Theta_D) P)+\mathrm{o}(t-t_0),\label{ut-1}\\
PU(t)P&=P-\frac{1}{2}(t-t_0)^2 (P\dot{ V}^t\big|_{t=t_0}S^2\dot{ V}^t\big|_{t=t_0}P-P\dot{ V}^t\big|_{t=t_0}S^2\Theta_D P-P\Theta_D S^2\dot{ V}^t\big|_{t=t_0}P\nonumber\\
&+ P\Theta_D S^2\Theta_D P)+\mathrm{o}(t-t_0)^2,\label{pup}\\
\label{u-1}
PU(t)^{-1}P&=P-\frac{1}{2}(t-t_0)^2 (P\dot{ V}^t\big|_{t=t_0}S^2\dot{ V}^t\big|_{t=t_0}P-P\dot{ V}^t\big|_{t=t_0}S^2\Theta_D P-P\Theta_D S^2\dot{ V}^t\big|_{t=t_0}P\nonumber\\
&+ P\Theta_D S^2\Theta_D P)+\mathrm{o}(t-t_0)^2,\\
PU(t)^{-1}(I-P)&=(t-t_0) (-P\dot{ V}^t\big|_{t=t_0}S+P\Theta_D S)+\mathrm{o}(t-t_0).\label{7.44.1}
\end{align}
Indeed, recalling that $D(t) = P(t) - P$ and using \eqref{ptp} and \eqref{ppt} yields
\begin{align}\label{qq}
D^2(t)&=(P(t)-P)(P(t)-P)=P(t)+P-P(t)P-PP(t)\\
&=P(t)-P+(P-P(t)P)+(P-PP(t))=D(t)-(t-t_0) P^{(1)}+\mathrm{o}(t-t_0),
\no
\end{align}
where we have defined $P^{(1)}=(-S\dot{ V}^t\big|_{t=t_0}+S\Theta_D) P+(-P\dot{ V}^t\big|_{t=t_0}+ P\Theta_D) S$. Hence,
\begin{align}\label{qqq}
(I-D(t))(D(t)-(t-t_0) P^{(1)})=(t-t_0)  D(t)P^{(1)}+\mathrm{o}(t-t_0)=\mathrm{o}(t-t_0),
\end{align}
and therefore $D(t)=(t-t_0)  P^{(1)}+(I-D(t))^{-1}\mathrm{o}(t-t_0)$, yielding
\begin{align}\label{qqqq}
D(t)=(t-t_0)  P^{(1)}+\mathrm{o}(t-t_0).
\end{align}
Since $D(t)=O(t-t_0)$, formula \eqref{dfnUUinv} yields
$U(t)=I-P-P(t)+2P(t)P+O(t-t_0)^2$.
Applying \eqref{ptp} and then \eqref{qqqq}, we obtain
\eqref{eq1.72}. Similarly,
$U(t)^{-1}=I-P-P(t)+2PP(t)+O(t-t_0)^2$ yields \eqref{ut-1}.
Formula \eqref{pup} follows from the calculation
\begin{equation*}
\begin{split}
PU(t)P&=P(I-D^2(t))^{-1/2}P(t)P
=PP(t)P+\frac{1}{2}PD(t)^2P+O((t-t_0)^3)\\
&=P-(t-t_0)^2 (P\dot{V}^t\big|_{t=t_0}S^2\dot{V}^t\big|_{t=t_0}P+ P\Theta_D   S^2\Theta_D  P\nonumber\\
&\quad-P\dot{V}^t\big|_{t=t_0} S^2\Theta_D  P- P\Theta_D S^2\dot{V}^t\big|_{t=t_0}P)+\frac{1}{2}(t-t_0)^2P(P^{(1)})^2P+\mathrm{o}(t-t_0)^2\\
&=P-(t-t_0)^2 (P\dot{V}^t\big|_{t=t_0}S^2\dot{V}^t\big|_{t=t_0}P+ P\Theta_D   S^2\Theta_D  P\nonumber\\
&\quad-P\dot{V}^t\big|_{t=t_0} S^2\Theta_D  P- P\Theta_D S^2\dot{V}^t\big|_{t=t_0}P)\\
&\quad+\frac{1}{2}(t-t_0)^2 (P\dot{ V}^t\big|_{t=t_0}S^2\dot{ V}^t\big|_{t=t_0}P-P\dot{ V}^t\big|_{t=t_0}S^2\Theta_D P-P\Theta_D S^2\dot{ V}^t\big|_{t=t_0}P\\
&\quad+ P\Theta_D S^2\Theta_D P)+\mathrm{o}(t-t_0)^2\\
&=P-\frac{1}{2}(t-t_0)^2 (P\dot{ V}^t\big|_{t=t_0}S^2\dot{ V}^t\big|_{t=t_0}P-P\dot{ V}^t\big|_{t=t_0}S^2\Theta_D P-P\Theta_D S^2\dot{ V}^t\big|_{t=t_0}P\\
&\quad+ P\Theta_D S^2\Theta_D P)+\mathrm{o}(t-t_0)^2,
\end{split}
\end{equation*}
and a similar argument yields \eqref{u-1}. Also, \eqref{7.44.1} follows using \eqref{ut-1}.

Next, let us consider the operator
\[ PU(t)^{-1}R(\zeta, t)U(t)P=A_1+A_2+A_3+A_4, \]
where we denote
\begin{align*}
\begin{split}
A_1&=PU(t)^{-1}PR(\zeta, t)PU(t)P,
\quad A_2=PU(t)^{-1}(I-P)R(\zeta, t)PU(t)P,\\
 A_3&=PU(t)^{-1}PR(\zeta, t)(I-P)U(t)P,
 \quad A_4=PU(t)^{-1}(I-P)R(\zeta, t)(I-P)U(t)P.
\end{split}
\end{align*}
Using \eqref{u-1}, \eqref{prp} and \eqref{pup},  we obtain
\begin{align*}
\begin{split}
A_1=&(P-\frac{1}{2}(t-t_0)^2(P\dot{ V}^t\big|_{t=t_0}S^2\dot{ V}^t\big|_{t=t_0}P-P\dot{ V}^t\big|_{t=t_0}S^2\Theta_D P-P\Theta_D S^2\dot{ V}^t\big|_{t=t_0}P\\
&+ P\Theta_D S^2\Theta_D P)+\mathrm{o}(t-t_0)^2\\
&\qquad\times((\lambda^{t_0}-\zeta)^{-1}P+(t-t_0) (\lambda^{t_0}-\zeta)^{-2}(-P\dot{ V}^t\big|_{t=t_0}+P\Theta_D )P\nonumber\\&+(t-t_0)^2(\lambda^{t_0}-\zeta)^{-2}(\tilde{R}_2(\zeta, t_0)-\frac{1}{2}P\ddot{V}^t\big|_{t=t_0}P)+\mathrm{o}(t-t_0)^{2}_u)\\
&\qquad\times(P-\frac{1}{2}(t-t_0)^2(P\dot{ V}^t\big|_{t=t_0}S^2\dot{ V}^t\big|_{t=t_0}P-P\dot{ V}^t\big|_{t=t_0}S^2\Theta_D P-P\Theta_D S^2\dot{ V}^t\big|_{t=t_0}P\\
&+ P\Theta_D S^2\Theta_D P)+\mathrm{o}(t-t_0)^2)\\
=&(\lambda^{t_0}-\zeta)^{-1}P+(\lambda^{t_0}-\zeta)^{-2}(t-t_0)(-P\dot{ V}^t\big|_{t=t_0}P+P\Theta_D P)-\frac{1}{2}(\lambda^{t_0}-\zeta)^{-2}(t-t_0)^2P\ddot{V}^t\big|_{t=t_0}P\\&+(\lambda^{t_0}-\zeta)^{-2}(t-t_0)^2\tilde{R}_2(\zeta, t_0)\\&-(\lambda^{t_0}-\zeta)^{-1}(t-t_0)^2(P\dot{ V}^t\big|_{t=t_0}S^2\dot{ V}^t\big|_{t=t_0}P-P\dot{ V}^t\big|_{t=t_0}S^2\Theta_D P-P\Theta_D S^2\dot{ V}^t\big|_{t=t_0}P\\
&+ P\Theta_D S^2\Theta_D P)+\mathrm{o}(t-t_0)^{2}_u.
\end{split}
\end{align*}
 Applying integration $-\frac{1}{2\pi i}\int_\gamma\zeta(\cdot)\,d\zeta$ yields
 \begin{align*}
 -\frac{1}{2\pi i}\int_\gamma\zeta A_1\,d\zeta=&\lambda^{t_0} P-(t-t_0)(-P\dot{ V}^t\big|_{t=t_0}P+P\Theta_D P)+\frac{1}{2}(t-t_0)^2P\ddot{V}^t\big|_{t=t_0}P\\
 +&(t-t_0)^2(-P\dot{V}^t\big|_{t=t_0}S\dot{V}^t\big|_{t=t_0}P- P\Theta_D   S\Theta_D  P+P\dot{V}^t\big|_{t=t_0} S\Theta_D  P+ P\Theta_D S\dot{V}^t\big|_{t=t_0}P)\\
 -&2\lambda^{t_0}(t-t_0)^2(P\dot{ V}^t\big|_{t=t_0}S^2\dot{ V}^t\big|_{t=t_0}P-P\dot{ V}^t\big|_{t=t_0}S^2\Theta_D P-P\Theta_D S^2\dot{ V}^t\big|_{t=t_0}P\\+& P\Theta_D S^2\Theta_D P)+\mathrm{o}(t-t_0)^{2}.
 \end{align*}

Also, it follows from \eqref{7.44.1} and \eqref{7.44.2} that 
\begin{align*}
\begin{split}
A_2=&((t-t_0) (-P\dot{ V}^t\big|_{t=t_0}S+P\Theta_D S)+\mathrm{o}(t-t_0))
((t-t_0)  (\lambda^{t_0}-\zeta)^{-1}(I-P)R({\zeta}, t_0)(-\dot{ V}^t\big|_{t=t_0}+\Theta_D) P\\\quad+&\mathrm{o}(t-t_0)_u)(P-\frac{1}{2}(t-t_0)^2 (P\dot{ V}^t\big|_{t=t_0}S^2\dot{ V}^t\big|_{t=t_0}P-P\dot{ V}^t\big|_{t=t_0}S^2\Theta_D P-P\Theta_D S^2\dot{ V}^t\big|_{t=t_0}P\\
&+ P\Theta_D S^2\Theta_D P)+\mathrm{o}(t-t_0)^2)\\
=&(\lambda^{t_0}-\zeta)^{-1}(t-t_0)^2 (-P\dot{ V}^t\big|_{t=t_0}S+P\Theta_D S)R({\zeta}, t_0)(-\dot{ V}^t\big|_{t=t_0}+\Theta_D) P+\mathrm{o}(t-t_0)^2_u.
\end{split}
\end{align*}
Applying integration $-\frac{1}{2\pi i}\int_\gamma\zeta(\cdot)\,d\zeta$ yields
 \begin{align*}
 -\frac{1}{2\pi i}\int_\gamma\zeta A_2\,d\zeta=&\lambda^{t_0}(t-t_0)^2 (-P\dot{ V}^t\big|_{t=t_0}S+P\Theta_D S)S(-\dot{ V}^t\big|_{t=t_0}+\Theta_D) P+\mathrm{o}(t-t_0)^2_u\\
 =&\lambda^{t_0}(t-t_0)^2(P\dot{ V}^t\big|_{t=t_0}S^2\dot{ V}^t\big|_{t=t_0}P-P\dot{ V}^t\big|_{t=t_0}S^2\Theta_D P-P\Theta_D S^2\dot{ V}^t\big|_{t=t_0}P\\ &\qquad+ P\Theta_D S^2\Theta_D P)+\mathrm{o}(t-t_0)^{2}.
 \end{align*}
Similarly,
\begin{align*}
\begin{split}
A_3&=PU(t)^{-1}PR(\zeta, t)(I-P)U(t)P\\
&=(P-\frac{1}{2}(t-t_0)^2 (P\dot{ V}^t\big|_{t=t_0}S^2\dot{ V}^t\big|_{t=t_0}P-P\dot{ V}^t\big|_{t=t_0}S^2\Theta_D P-P\Theta_D S^2\dot{ V}^t\big|_{t=t_0}P\\
&\quad+ P\Theta_D S^2\Theta_D P)+\mathrm{o}(t-t_0)^2)((t-t_0) (\lambda^{t_0}-\zeta)^{-1}(-P\dot{ V}^t\big|_{t=t_0}+ P\Theta_D) R(\zeta, t_0)+\mathrm{o}(t-t_0)_u)\\
&\qquad \times((t-t_0) ((-S\dot{ V}^t\big|_{t=t_0}+ S\Theta_D) P+\mathrm{o}(t-t_0))\\&=(\lambda^{t_0}-\zeta)^{-1}(t-t_0)^2 (-P\dot{ V}^t\big|_{t=t_0}+ P\Theta_D) R(\zeta, t_0)(-S\dot{ V}^t\big|_{t=t_0}+ S\Theta_D) P+\mathrm{o}(t-t_0)^2_u,\\
A_4&=PU(t)^{-1}(I-P)R(\zeta, t)(I-P)U(t)P\\
&=((t-t_0) (-P\dot{ V}^t\big|_{t=t_0}S+P\Theta_D S)+\mathrm{o}(t-t_0))(R({\zeta}, t_0)+\mathrm{o}(t-t_0)_u))\\&\qquad\times((t-t_0) ((-S\dot{ V}^t\big|_{t=t_0}+ S\Theta_D) P+\mathrm{o}(t-t_0))\\&=(t-t_0)^2(-P\dot{ V}^t\big|_{t=t_0}S+P\Theta_D S)R({\zeta}, t_0)(-S\dot{ V}^t\big|_{t=t_0}+ S\Theta_D) P+\mathrm{o}(t-t_0)^2_u.
\end{split}
\end{align*}
Therefore,
\begin{align*}
 -\frac{1}{2\pi i}\int_\gamma\zeta A_3\,d\zeta=&\lambda^{t_0}(t-t_0)^2(P\dot{ V}^t\big|_{t=t_0}S^2\dot{ V}^t\big|_{t=t_0}P-P\dot{ V}^t\big|_{t=t_0}S^2\Theta_D P-P\Theta_D S^2\dot{ V}^t\big|_{t=t_0}P\\+& P\Theta_D S^2\Theta_D P)+\mathrm{o}(t-t_0)^{2},\\
 -\frac{1}{2\pi i}\int_\gamma\zeta A_4\,d\zeta=&\mathrm{o}(t-t_0)^2.
 \end{align*}
Collecting all these terms and using the standard relation from  \cite[Equation (III.6.24)]{Kato}
\[L^t_{\cG}P(t)=-\frac{1}{2\pi i}\int_\gamma \zeta R(\zeta,t)\,d\zeta\]
  yields \eqref{pulup} and concludes the proof.

		
	


\begin{thebibliography}{XXXXX}

\bibitem[A01]{A01} A. Abbondandolo,
{\em Morse Theory for Hamiltonian Systems.}
Chapman \& Hall/CRC Res. Notes Math. {\bf 425}, Chapman \& Hall/CRC, Boca Raton, FL, 2001.



\bibitem[A67]{arnold67}  V. I. Arnold,
{\em Characteristic classes entering in quantization conditions,}
Func. Anal. Appl. {\bf 1} (1967), 1--14.

\bibitem[A85]{Arn85} V. I.  Arnold,
{\em Sturm theorems and symplectic geometry,}
 Func. Anal. Appl. {\bf 19} (1985), 1--10.
 
 \bibitem[BW15]{BW}
C.\ Bandle and A.\ Wagner, {\em Second domain variation for problems with Robin boundary conditions}. J. Optim. Theory Appl. {\bf 167} (2015), 430--463.

 


 
\bibitem[BF98]{BF98} B.\ Boo{\ss}-Bavnbek and K.\ Furutani, {\em The Maslov index: a functional analytical definition and the spectral flow formula,}
Tokyo J.\ Math.\ {\bf 21} (1998), 1--34.
 


\bibitem[B56]{B56} R.\ Bott,
{\em On the iteration of closed geodesics and the Sturm intersection theory,}
Comm.\ Pure Appl.\ Math.\ {\bf 9} (1956), 171--206.

\bibitem[BLC06]{BLC}
V.\ I.\ Burenkov, P.\ D.\ Lamberti, and M. Lanza de Cristoforis, \textit{Spectral stability of nonnegative selfadjoint operators}. Sovrem. Mat. Fundam. Napravl. {\bf 15} (2006), 76--111; translation in J. Math. Sci. (N. Y.) {\bf 149} (2008), 1417--1452.

\bibitem[CLM94]{CLM94}  S.\ Cappell, R.\ Lee and E.\ Miller, {\em On the Maslov index},
Comm.\ Pure Appl.\ Math.\ {\bf 47}  (1994), 121--186.

\bibitem[CDB06]{CDB06} F.\ Chardard, F.\ Dias and T.\ J.\ Bridges, {\em Fast computation of the Maslov index for hyperbolic linear systems with periodic coefficients,} J. Phys. A {\bf 39} (2006), 14545--14557.


\bibitem[CDB09]{CDB09} F.\ Chardard, F.\ Dias and T.\ J.\ Bridges, {\em Computing the Maslov index of solitary waves. I. Hamiltonian systems on a four-dimensional phase space,} Phys. D {\bf 238} (2009), 1841--1867.

 \bibitem[CDB11]{CDB11} F.\ Chardard, F.\ Dias and T.\ J.\ Bridges, {\em Computing the Maslov index of solitary waves, Part 2: Phase space with dimension greater than four,} Phys. D {\bf 240} (2011), 1334--1344.
 
 \bibitem[CJLS14]{CJLS14} G.\ Cox, C.\  K.\ R.\ T.\ Jones, Y.\ Latushkin and A.\ Sukhtayev {\em The Morse and Maslov indices for multidimensional Schr\"odinger operators with matrix-valued potentials}, Trans.\ Amer.\ Math.\ Soc., to appear.
 
 \bibitem[CJM15]{CJM} G.\ Cox, C.\ K.\ R.\ T.\ Jones and  J.\ L.\ Marzuola, {\em A Morse index theorem for elliptic operators on bounded domains,} Comm. Partial Diff.\ Eqns.\  {\bf 40} (2015), 1467--1497.
 
 
 \bibitem[DP12]{DP} F.\ Dalbono and A.\ Portaluri, {\em Morse--Smale index theorems for elliptic boundary deformation problems,} J.\ Diff.\ Eqns.\ {\bf 253} (2012), 463--480.
 

\bibitem[DJ11]{DJ11} J.\ Deng and C.\ K.\ R.\ T.\ Jones, {\em Multi-dimensional Morse Index Theorems and a symplectic view of elliptic boundary value problems,}
Trans. Amer. Math. Soc. {\bf 363} (2011), 1487--1508.



\bibitem[D76]{D76} J. J. Duistermaat, {\em On the Morse index in variational calculus,} Advances in Math. {\bf 21} (1976), 173--195.




\bibitem[F04]{F} K.\ Furutani, {\em Fredholm-Lagrangian-Grassmannian and the Maslov index,} J.\ Geometry and Physics {\bf 51} (2004), 269--331.


\bibitem[GS52]{GS}
P. Garabedian and M. Schiffer, \textit{Convexity of domain functionals}, J.\ D'Analyse Math.\
{\bf 2} (1952--53), 281--368.

\bibitem[GM08]{GM08} F.\ Gesztesy and M.\ Mitrea, {\em Generalized Robin boundary conditions, Robin-to-Dirichlet maps, and Krein-type resolvent formulas for Schr{\"o}dinger operators on bounded Lipschitz domains}, In:   Perspectives in partial differential equations, harmonic analysis and applications, 105 -- 173, Proc. Sympos. Pure Math., {\bf 79}, Amer. Math. Soc., Providence, RI, 2008.






\bibitem[G10]{Gr} P.\ Grinfeld, \textit{Hadamar's formula inside and out},
J.\ Optim. Theory Appl. {\bf 146} (2010), 654--690.

\bibitem[H08]{H}
J. Hadamard (1908), {\em M\'{e}moire sur le probl\`{e}me dÕanalyse relatif \`{a}  lÕ\'{e}quilibre
des plaques \'{e}lastiques encastr\'{e}es}, Ouvres de J. Hadamard {\bf 2} ed. C.N.R.S. Paris (1968).


\bibitem[H05]{Henry} D.\ Henry,
{\em Perturbation of the boundary in boundary-value problems of partial differential equations},   London Math.\ Soc.\ Lecture Notes Ser.\ {\bf 318}, Cambridge Univ.\ Press, Cambridge, 2005.


\bibitem[HS15]{HS15} P.\ Howard and A.\ Sukhtayev, {\em The Maslov and Morse indices for Schr\"odinger Operators on $[0,1]$}, J.\ Diff.\ Eqns., to appear.



 \bibitem[JLM13]{JLM13}  C.\ K.\ R.\ T.\ Jones,  Y.\ Latushkin and R.\ Marangell,
{\em The Morse and Maslov indices for matrix Hill's equations,}
Proc.\ Symp.\ Pure Math.\  {\bf 87} (2013), 205--233.

\bibitem[K80]{Kato} T.\ Kato, {\it Perturbation Theory for Linear Operators}, Springer, Berlin, 1980.

\bibitem[KWZ01]{KWZ2}
Q.\ Kong, H.\ Wu and A.\  Zettl, {\em Left-definite Sturm-Liouville problems.} J. Diff.\ Eqns.\ {\bf 177} (2001), 1--26. 

\bibitem[KWZ97]{KWZ1}
Q.\ Kong, H.\ Wu and A.\ Zettl, {\em Dependence of eigenvalues on the problem.} Math.\ Nachr.\ {\bf 188} (1997), 173--201.

\bibitem[KZ96]{KZ}
Q.\ Kong and A.\ Zettl,  {\em Dependence of eigenvalues of Sturm-Liouville problems on the boundary}. J. Diff.\ Eqns.\ {\bf 126} (1996), 389--407.



\bibitem[LSS16]{LSS} Y.\ Latushkin, A.\ Sukhtayev and S.\ Sukhtaiev, {\it The Morse and Maslov indices for Schr\"odinger operators}, J.\ D'Analyse Math., to appear, arXiv:1411.1656.



\bibitem[MS98]{MS} D.\ McDuff and D.\ Salamon {\em Introduction to Symplectic Topology. Second Edition},  Clarendon Press, Oxford, 1998.

\bibitem[M63]{M63} J.\ Milnor, {\em Morse Theory},  Annals of Math.\ Stud. {\bf 51}, Princeton Univ. Press, Princeton, N.J., 1963.


\bibitem[PW15]{PW} A.\ Portaluri and N.\ Waterstraat, {\em A Morse-Smale index theorem for indefinite elliptic systems and bifurcation,} J.\ Diff.\ Eqns. {\bf 258}, 1715--1748. 


 \bibitem[R94]{R}
J. W. S. Rayleigh, \textit{Theory of Sound}, Dover (1945) (second edition of 1894).

	
\bibitem[RS93]{rs93} J.\ Robin and D.\ Salamon,
{\em The Maslov index for paths}, Topology {\bf 32} (1993), 827--844.

\bibitem[RS95]{RoSa95} J.\ Robin and D.\ Salamon, {\it  The spectral flow and the Maslov
index},  Bull. London Math. Soc. {\bf 27} (1995),  1--33.


\bibitem[SW08]{SW08} D.\ Salamon and K.\ Wehrheim,
{\it Instanton Floer homology with Lagrangian boundary conditions},
Geom.\ Topol.\ {\bf 12} (2008), 747--918.


\bibitem[S65]{S65} S.\ Smale, {\em On the Morse index theorem}, J.\ Math.\ Mech.\ {\bf 14} (1965), 1049 -- 1055; 
see also pp. 535--543 in: {\em The collected papers by Stephen Smale, V. 2}, F.\ Cucker and R.\ Wong, edts.,  
City University of Hong Kong, 2000.




\bibitem[U73]{U73} K.\ Uhlenbeck, {\em The Morse index theorem in Hilbert spaces,} J.\ Diff.\ Geometry {\bf 8} (1973), 555--564.

\bibitem[W80]{We80} J.\ Weidmann, {\it Linear Operators in Hilbert Spaces},
Graduate Texts in Mathematics, Vol.~68, Springer, New York, 1980.

\end{thebibliography}
\end{document}